\definecolor{citegreen}{rgb}{0,0.6,0}
\definecolor{refred}{rgb}{0.8,0,0}
\numberwithin{equation}{section}
\newtheorem{theorem}{Theorem}[section]  
\newtheorem{lemma}[theorem]{Lemma}
\newtheorem{corollary}[theorem]{Corollary}
\newtheorem{proposition}[theorem]{Proposition}
\newtheorem{remark}[theorem]{Remark}
\newtheorem{definition}[theorem]{Definition}
\DeclareMathAlphabet{\mathpzc}{OT1}{pzc}{m}{it}
\newcommand{\prob}[1]{\mathscr P(#1)}
\newcommand{\probt}[1]{\mathscr P_2(#1)}
\newcommand{\lims}{\varlimsup}
\newcommand{\limi}{\varliminf}
\newcommand{\restr}[1]{\lower3pt\hbox{$|_{#1}$}}
\renewcommand{\d}{{\rm d}}
\newcommand{\vol}{{\rm{Vol}}}
\newcommand{\ric}{{\rm{Ric}}}
\newcommand{\Ric}{{\rm{Ric}}}
\newcommand{\Riem}{{\rm{Riem}}}
\newcommand{\C}{{\rm{Ch}}}
\newcommand{\sfd}{{\sf{d}}}
\newcommand{\di}{{\sf{D}}}
\newcommand{\supp}{{\rm {supp}}}
\newtheorem{ackn}{Acknowledgments\!}
\newcommand{\R}{\mathbb{R}}
\newcommand{\N}{\mathbb{N}}
\newcommand{\eps}{\varepsilon}
\newcommand{\ov}{\overline}
\newcommand{\ggamma}{{\mbox{\boldmath$\gamma$}}}
\newcommand{\aalpha}{{\mbox{\boldmath$\alpha$}}}
\newcommand{\ssigma}{{\mbox{\boldmath$\sigma$}}}
\newcommand{\ress}[2]{\Big\vert_{\genfrac{}{}{0pt}{}{#1}{#2}}}
\newcommand{\res}[1]{\Big\vert_{{#1}}}
\newcommand{\mm}{\mathfrak m}
\newcommand{\sfh}{{\sf H}}
\newcommand{\h}{{\sf h}}
\renewcommand{\H}{\sfh}
\newcommand{\ent}{{\rm Ent}_\mm}
\newcommand{\entt}{{\rm Ent}_{\widetilde\mm}}
\newcommand{\fr}{\hfill$\blacksquare$}
\newcommand{\D}{\mathbb D}
\newcommand{\Co}{{\sf C}}
\begin{document}

\title[A Flow Tangent to the Ricci Flow]{A Flow Tangent to the Ricci Flow via Heat Kernels and Mass Transport}
\author[Nicola Gigli]{Nicola Gigli}
\address[Nicola Gigli]{Universit\'e de Nice, Park Valrose, 06108 Nice, France}
\email[N. Gigli]{gigli@unice.fr}
\author[Carlo Mantegazza]{Carlo Mantegazza}
\address[Carlo Mantegazza]{Scuola Normale
  Superiore di Pisa, Piazza dei Cavalieri 7, 56126 Pisa, Italy}
\email[C. Mantegazza]{c.mantegazza@sns.it}

\begin{abstract}
We present a new relation between the short time behavior of the heat flow, the geometry of optimal transport and the Ricci flow. We also show how this relation can be used to define an evolution of metrics on non--smooth metric measure spaces with Ricci curvature bounded from below.
\end{abstract}

\maketitle
\tableofcontents

\section{Introduction}

The Ricci flow is possibly the most important and largely studied geometric flow in literature, its relevance is well deserved by the key role it played 
in solving some long standing open conjectures, in particular, the Poincar\'e conjecture finally proved by Perelman.

In~\cite{McCann-Topping} McCann and Topping noticed  an interesting relation between such flow, the heat flow and optimal transport: they  proved that 
a family of metrics $g_\tau$ on a smooth and 
compact differential manifold $M$ is a backward {\em super} Ricci flow, i.e. it satisfies
\[
-\frac{\d g_\tau}{\d\tau}+2\Ric(g_\tau)\geq 0,
\]
if and only if the time dependent {\em quadratic transportation distance} $W_2^{(M,g_\tau)}$ is not increasing along two solutions of the time dependent heat equation
\begin{equation}
\label{eq:heatev}
\frac{\d}{\d\tau}\mu_\tau=\Delta_{g_\tau}\mu_\tau.
\end{equation}
In particular, a backward Ricci flow can be characterized as the minimal evolution  among all the flows for which such {\em non--expansion} property holds. 

Keeping in mind that on a fixed Riemannian manifold $(M,g)$ one always has
\[
W_2(\mu_t,\nu_n)\leq e^{-Kt}W_2(\mu_0,\nu_0),
\]
for each couple of solutions $\mu_t$ and $\nu_t$ of the heat flow, where $K$ is a lower bound on the Ricci tensor of $(M,g)$, McCann--Topping result can be thought as: the Ricci flow is the evolution that precisely compensate the lack/excess of contraction w.r.t. the distance $W_2$.

\bigskip

In this paper we propose a different point of view on the same subject. Let $(M,g)$ be a compact Riemannian manifold and $\prob M$ the space of Borel probability measures on $M$. We denote by $\H_t:\prob M\to\prob M$ the heat semigroup, so that given $\mu\in\prob M$, the curve $t\mapsto\H_t(\mu)$ is the solution of the heat equation with initial condition $\mu$. Then, for every $t\geq 0$ we have an embedding of $M$ in $\prob M$ given by
\begin{equation}
\label{eq:constr}
M\ni x\qquad\longmapsto\qquad\iota_t(x):=\H_t(\delta_x)\in\prob M.
\end{equation}
Endow $\prob M$ with the distance $W_2$ and the image $\iota_t(M)$ with the``intrinsic" distance induced by $W_2$ (i.e. not the ``chord" distance $W_2$ in $\prob M$ but the ``arc" one, where the distance is defined as the minimal length of the paths lying in $\iota_t(M)$). By the backward uniqueness of the heat flow, we know that the map $\iota_t$ is injective and thus the distance on $\iota_t(M)$ can be pulled back to a distance $\sfd_t$ on $M$, which clearly coincides with the Riemannian distance at time $t=0$. It is not hard to see that $\sfd_t$ is still a Riemannian distance, namely that there exists a smooth metric tensor $g_t$ on $M$ inducing $\sfd_t$: shortly said, this comes from the fact that, according to Otto, the space 
$(\prob M,W_2)$ is an infinite dimensional Riemannian manifold and $\iota_t(M)$ a ``smooth" finite dimensional submanifold.

Our main result (Theorem~\ref{thm:main}) is that $g_t$ is an evolution of metrics which is ``tangent" at time $t=0$ to the Ricci flow, the rigorous statement being the following.
\begin{theorem}\label{thm:intro}
Let $[0,1]\ni s\mapsto \gamma_s\in M$ be a geodesic in $(M,g)$. Then, there holds
\[
\frac{\d}{\d t}\int_0^1 g_t(\gamma_s',\gamma_s')\,\d s\res{t=0}=-2\int_0^1\Ric_{g}(\gamma_s'\gamma_s')\,\d s.
\]
and
\[
\frac{\d}{\d t} g_t(\gamma_s',\gamma_s')\res{t=0}=-2\Ric_{g}(\gamma_s'\gamma_s')\qquad a.e.\ s\in[0,1].
\]
\end{theorem}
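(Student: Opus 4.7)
My plan is to exploit Otto's Riemannian structure on $(\prob M,W_2)$ to express $g_t(v,v)$ as the minimal Dirichlet energy of an elliptic problem on $M$ weighted by the heat kernel, differentiate this expression in $t$ using the heat equation and Bochner's identity, and then send $t\to 0^+$. Fixing $x\in M$ and $v\in T_xM$, I set $u_t(y):=p_t(x,y)$ and $V_t(y):=v\cdot\nabla_x p_t(x,y)$ (which has zero average in $y$). The tangent vector $\d\iota_t(v)\in T_{\iota_t(x)}\prob M$ is identified with the signed measure $V_t\,\vol$, and by Otto's formalism it is represented by the gradient $\nabla\phi_t$ of the function $\phi_t$ determined (up to a constant) by
\[
-\operatorname{div}_y\bigl(u_t\,\nabla_y\phi_t\bigr)=V_t,
\]
with $g_t(v,v)=\int_M|\nabla\phi_t|^2\,u_t\,\d\vol$. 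I will use the dual characterisation $g_t(v,v)=\sup_\phi\bigl\{2\int\phi\,V_t-\int|\nabla\phi|^2\,u_t\bigr\}$ to bypass differentiating $\phi_t$ explicitly in $t$.

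The core computation is to differentiate in $t$. Since $u_t$ solves the heat equation in $y$ and $v\cdot\nabla_x$ commutes with $\Delta_y$, also $\partial_tV_t=\Delta_yV_t$. The envelope theorem (optimum attained uniquely modulo additive constants) then gives
\[
\frac{\d}{\d t}g_t(v,v)=2\int_M\phi_t\,\Delta V_t\,\d\vol-\int_M|\nabla\phi_t|^2\,\Delta u_t\,\d\vol.
\]
Integrating by parts twice in the first integral, using the equation for $\phi_t$, produces $2\int\langle\nabla\Delta\phi_t,\nabla\phi_t\rangle u_t$; in the second, one integration by parts and Bochner's identity
\[
\tfrac12\Delta|\nabla\phi|^2=|\mathrm{Hess}\,\phi|^2+\langle\nabla\phi,\nabla\Delta\phi\rangle+\Ric(\nabla\phi,\nabla\phi)
\]
produce a $-2\int\langle\nabla\Delta\phi_t,\nabla\phi_t\rangle u_t$ term that exactly cancels the previous one, yielding the clean formula
\[
\frac{\d}{\d t}g_t(v,v)=-2\int_M|\mathrm{Hess}\,\phi_t|^2\,u_t\,\d\vol-2\int_M\Ric(\nabla\phi_t,\nabla\phi_t)\,u_t\,\d\vol.
\]

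The final step is the limit $t\to 0^+$. Short-time heat-kernel asymptotics together with the elliptic equation for $\phi_t$ imply $\phi_t(y)\approx\langle\exp_x^{-1}(y),v\rangle$ to leading order for $y$ near $x$, so that $\nabla\phi_t(x)\to v$ and $|\mathrm{Hess}\,\phi_t(y)|=O(d(x,y))$. Since $u_t\,\vol$ concentrates at scale $\sqrt t$ around $x$, this gives $\int\Ric(\nabla\phi_t,\nabla\phi_t)u_t\to\Ric_g(v,v)$ and $\int|\mathrm{Hess}\,\phi_t|^2\,u_t=O(t)\to 0$, establishing the pointwise statement at $x=\gamma_s$, $v=\gamma_s'$; the integrated identity then follows by integrating in $s$ and interchanging $\d/\d t$ with $\int_0^1\,\d s$, justified by uniform estimates. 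The main obstacle is precisely this passage to the limit: obtaining uniform control of $\phi_t$ and $\mathrm{Hess}\,\phi_t$ as $t\to 0^+$. The natural strategy is to rescale $y=\exp_x(\sqrt t z)$, converting the elliptic problem into a perturbation of a Gaussian-weighted Poisson problem on $T_xM\cong\R^n$, where Ricci appears as an explicit next-order coefficient and the Hessian contribution is manifestly $o(1)$.
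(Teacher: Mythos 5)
Your derivative identity
\[
\frac{\d}{\d t}g_t(v,v)=-2\int_M|\mathrm{Hess}\,\phi_t|^2\,u_t\,\d\vol-2\int_M\Ric(\nabla\phi_t,\nabla\phi_t)\,u_t\,\d\vol
\]
is correct and is precisely the computation of Proposition~\ref{prop:dasopra} in the paper (the Otto--Westdickenberg calculation), and the second term is handled exactly as you propose: the transport plans $\ggamma_{t,x,v}$ converge to $\delta_{(x,v)}$ in $W_2$ on $TM$, so $\int\Ric(\nabla\phi_t,\nabla\phi_t)u_t\to\Ric(v,v)$. The problem is the first term. You assert $\int_M|\mathrm{Hess}\,\phi_t|^2u_t\,\d\vol=O(t)\to0$ via short-time heat-kernel asymptotics and a parabolic rescaling, but this is exactly the estimate that Remark~\ref{re:cosamanca} singles out as the step the authors could not prove directly, and which forced them to abandon the pointwise identity in favor of an a.e.\ statement along a geodesic. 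The heuristic that $\phi_t(y)\approx\langle\exp_x^{-1}(y),v\rangle$ only controls the leading order; the subleading corrections to $\phi_t$ (of order $t$ and of order $d(x,y)^2$) have Hessians that are \emph{a priori} $O(1)$ at scale $d(x,y)\sim\sqrt t$, so $\int|\mathrm{Hess}\,\phi_t|^2u_t$ is a priori only $O(1)$, not $o(1)$. Upgrading this to $o(1)$ requires quantitative second-order elliptic estimates for the Neumann problem $-\mathrm{div}(u_t\nabla\phi_t)=V_t$ that are uniform as the weight $u_t$ degenerates to a delta, and you have not supplied them; the rescaling $y=\exp_x(\sqrt t z)$ you sketch is a reasonable starting point but is left entirely unjustified, and it is not clear that it gives the necessary control on the error terms coming from curvature.

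Because of this gap, only the upper half of the theorem, namely
\[
\lims_{t\downarrow0}\frac{g_t(v,v)-g(v,v)}{t}\leq-2\Ric(v,v),
\]
is actually established by your argument (by dropping the nonnegative Hessian term, which is Corollary~\ref{cor:dasopra}). The paper obtains the matching lower bound by a genuinely different route: it uses the Kantorovich duality $\tfrac12W_2^2(\mu_{t,x},\mu_{t,\gamma_\eps})\geq\int\eps\varphi\,\d\mu_{t,x}+\int(\eps\varphi)^c\,\d\mu_{t,\gamma_\eps}$ with a $c$-concave test function that is optimal at $t=0$ (Lemma~\ref{le:trasporto}), then computes the Laplacian of the $c$-transform by a second-variation/Jacobi-field argument (Proposition~\ref{prop:keycomp}), producing $-\eps^2\Ric(\nabla\varphi,\nabla\varphi)+O(\eps^3)$; summing over a fine partition of the geodesic kills the $O(\eps^3)$ remainder and yields the integrated identity, from which the a.e.\ version follows by Fatou. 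Your approach would give a cleaner pointwise statement for every $v$ if the Hessian estimate could be made rigorous, but as written it has exactly the hole the paper's construction is designed to avoid.
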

Notice that we get an integrated/a.e. version of the result rather than the cleaner formula 
$\frac{\d}{\d t}g_t\res{t=0}=-2\Ric_{g}$ due to some potential lack of smoothness of the evolution that we are not able to fully manage at the moment, see Remark~\ref{re:cosamanca}.

Due to McCann--Topping result, our theorem is in some sense not so surprising, since it states that the infinitesimal behavior of the $W_2$--distance along 
the heat flow is driven by the Ricci tensor, which is in the same spirit of their work. Yet, at the technical level there is a difference worth to be underlined: 
the flow $g_t$ that we define is \emph{not} the Ricci flow: to see this, notice that since $\H_t$ is injective for any $t\geq 0$, the metric 
tensor $g_t$ is never 0, hence our flow never shrinks distances to 0 in finite time, as opposed to the Ricci flow which shrinks spheres to points. 
In particular, the evolution we define is not driven by a semigroup, otherwise, due to Theorem~\ref{thm:intro}, it should be the Ricci flow.  
Again, this was expected, as the semigroup $\H_t$ that we use to define the distance $\sfd_t$ is the heat flow on the initial manifold $(M,g)$, 
while if one wants to get the Ricci flow, he should use at each time the corresponding Laplacian, as in formula~\eqref{eq:heatev}. 
This characteristic, which can be seen as a negative point, actually turns out to be useful if one is interested in defining a flow in a non--smooth setting, 
as we now explain.

\smallskip

In~\cite{McCann-Topping}, McCann--Topping noticed that they provided a purely metric characterization of Ricci flow, which therefore can be theoretically 
used to define what a Ricci flow should be if the initial space is non--smooth: the minimal flow (in the sense that it expands distances no faster than any other flow) 
among all super Ricci flows, where a super Ricci flow is any flow contracting the time dependent $W_2$--distance along any two solutions of the heat equation.

Unfortunately, although this approach is very intriguing, it is not clear whether such a flow  exists or it is unique for a non--smooth initial datum 
(it is not even clear if at least {\em one} super Ricci flow exists). Instead, the embedding in formula~\eqref{eq:constr} is well defined as soon as one has the heat kernel at his disposal. 

The natural abstract class of spaces where an heat kernel exists and well behaves w.r.t. the distance $W_2$ is the one of $RCD(K,\infty)$ spaces, introduced in~\cite{Ambrosio-Gigli-Savare11bis}. This is a subclass of the class of $CD(K,\infty)$ spaces introduced by Lott--Sturm--Villani (see~\cite{Lott-Villani09}, \cite{Sturm06I}) of spaces with Ricci curvature bounded from below: shortly said, $RCD(K,\infty)$ spaces are $CD(K,\infty)$ spaces where the heat flow is linear. This choice rules rules out Finsler--type geometries and ensures, on one hand, the existence of a heat kernel, on the other hand, the $W_2$--contraction along two heat flows (in~\cite{Sturm-Ohta10} Sturm and Ohta proved that on a normed space $(\R^d,\|\cdot\|,\mathcal L^d)$ the heat flow never contracts the $W_2$--distance unless the norm comes from a scalar product, therefore, due to the spirit of the discussion here, it is natural to avoid considering this sort of spaces).

Proceeding as in the smooth case, given an $RCD(K,\infty)$ space $(X,\sfd,\mm)$ we can define an evolution of metrics $\sfd_t$ for any $t\geq 0$, our results being then the following.
\begin{itemize}
\item[(i)] The distances $\sfd_t$ are well defined for any $t\geq 0$, so that the flow exists and is unique (Theorem~\ref{thm:nonsmooth}).
\item[(ii)] Under very general assumptions -- which cover all the finite dimensional situations -- the topology induced by $\sfd_t$ is the same as the one induced by $\sfd$ (Theorem~\ref{thm:nonsmooth} and Remark~\ref{re:finite}).
\item[(iii)] The flow has some very general weak continuity properties both in time (Theorem~\ref{thm:cont}) and w.r.t. measured Gromov--Hausdorff convergence of the initial datum (Theorem~\ref{thm:stabflow}). 
\end{itemize}
Concerning point (ii), notice that although this is a different behavior from the one of Ricci flow, it can turn out to be a useful property in a non--smooth setting. Indeed, given that the Ricci flow can create singularities even with a smooth initial datum, it is unnatural to expect that a Ricci flow for non--smooth initial data does not create singularities in some short time interval. Thus, a Ricci flow with non--smooth initial data could disrupt the topology even instantaneously, which certainly complicates the analysis. Still, we point out that anyway we do not expect the distances $\sfd_t$ to be bi--Lipschitz equivalent to the original one.

About point (iii), we remark that such a property is strictly related to point (i), as ``being well-defined" is very close to ``having some weak continuity properties  under perturbations". 
Actually, the problem of defining a true Ricci flow for non--smooth initial data is very much related to the lack of a stability result for the Ricci flow on smooth manifolds under measured
Gromov--Hausdorff convergence (as pointed out to us by Sturm).

\bigskip

We conclude observing that the definition of the flow of distances $\sfd_t$ with a non--smooth initial datum opens several non--trivial questions about its behavior, 
which are not addressed in this paper, in particular:
\begin{itemize}
\item Given an $RCD(K,\infty)$ space $(X,\sfd,\mm)$ as initial datum, is it true that $(X,\sfd_t,\mm)$ is an $RCD(K_t,\infty)$ space for some $K_t$, possibly under some finite dimensionality assumption?
\item Is the space $(X,\sfd_t,\mm)$, in any sense, ``smoother" that the original one?
\end{itemize}

\begin{ackn} 
The authors wish to thank Karl Theodor Sturm for valuable suggestions. 
The second author is partially supported by the Italian project FIRB--IDEAS ``Analysis and Beyond''.
\end{ackn}

\section{Setting and Preliminaries}
\subsection{Metric spaces and quadratic transportation distance}
We recall here the basic facts about analysis in metric spaces and about the Kantorovich quadratic transportation distance $W_2$.

Given a metric space $(X,\sfd)$ and a non--trivial interval $I\subset \R$, a curve $I\ni t\mapsto x_t\in X$ is said to be absolutely continuous provided that 
there exists a function $f\in L^1(I)$ such that
\begin{equation}
\label{eq:accurve}
\sfd(x_t,x_s)\le \int_t^s f(r)\,\d r,\qquad\forall t,s\in I,\ t<s.
\end{equation}
It can be proved that if $t\mapsto x_t$ is absolutely continuous, the limit
\begin{equation}
\label{eq:metricspeed}
\lim_{h\to 0}\frac{\sfd(x_{t+h},x_t)}{|h|},
\end{equation}
exists for a.e. $t\in I$. It is called the metric speed of the curve, denoted by $|\dot x_t|$, belongs to $L^1(I)$ and is the minimal -- in the a.e. sense -- $L^1$ function $f$ 
that can be put in the right hand side of inequality~\eqref{eq:accurve} (see Theorem 1.1.2 in~\cite{AmbrosioGigliSavare05} for the proof). The length of the absolutely continuous curve $[0,1]\ni t\mapsto x_t\in X$ is, by definition, $\int_0^1|\dot x_t|\,\d t$ and it is easy to check that it holds
\begin{equation}
\label{eq:idlength}
\int_0^1|\dot x_t|\,\d t=\sup\sum_{i=0}^{N-1}\sfd(x_{t_i},x_{t_{i+1}}),
\end{equation}
the $\sup$ being taken among all $N\in\N$ and all partitions $0=t_0<t_1<\ldots<t_{N}=1$ of $[0,1]$. We will often denote a curve $t\mapsto x_t$ with $(x_t)$.

\bigskip

Given a complete and separable metric space $(X,\sfd)$, we denote by $\prob X$ its set of Borel probability measures and by $\probt X\subset\prob X$ the subset of measures with finite second moment, i.e. probability measures $\mu$ such that
\[
\int_X \sfd^2(\cdot,x_0)\,\d\mu<+\infty,\qquad\textrm{ for some (hence, for every) }x_0\in X.
\]
The space $\probt X$ will be endowed with the \emph{quadratic transportation distance} $W_2$, defined by
\[
W_2^2(\mu,\nu):=\inf\int_{X\times X} \sfd^2(x,y)\,\d\ggamma(x,y),\\
\]
the infimum being taken among all \emph{transport plans} $\ggamma\in \prob{X\times X}$ such that 
\[
\begin{split}
\pi^1_\sharp\ggamma&=\mu,\\
\pi^2_\sharp\ggamma&=\nu,
\end{split}
\]
being $\pi^1,\pi^2:X\times X\to X$ being the projections onto the first and second factor respectively.

We recall that the distance $W_2$ can be defined also in terms of the dual problem of optimal transport:
\[
\frac12 W_2^2(\mu,\nu)=\sup\int_X \varphi\,\d\mu+\int_X\varphi^c\,\d\nu,
\]
where the supremum is taken among all Borel maps $\varphi:X\to\R$ and the $c$--transform is defined as
\[
\varphi^c(y):=\inf_{x\in X} \frac{\sfd^2(x,y)}{2}-\varphi(x).
\]
It turns out that for $\mu,\nu\in\probt X$ the above supremum is always achieved, and that the maximal $\varphi$ can always be taken to be a  $c$--concave function, i.e. a function $\varphi$ such that $\varphi^{cc}=\varphi$.

The convergence in $(\probt X,W_2)$ is characterized by the following well known result.
\begin{theorem}\label{thm:basew2}
Let $n\mapsto\mu_n\in\probt X$ be a sequence and $\mu\in \probt X$. Then, the following are equivalent.
\begin{itemize}
\item[(i)] $W_2(\mu_n,\mu)\to 0$ as $n\to\infty$.
\item[(ii)] ${\int_X f\,\d\mu_n\to\int_X f\,\d\mu}$ for any $f\in C_b(X)$ and\\ ${\int_X \sfd^2(\cdot,x_0)\,\d\mu_n\to\int_X \sfd^2(\cdot,x_0)\,\d\mu}$ as $n\to\infty$ for some $x_0\in X$.
\item[(iii)] ${\int_X f\,\d\mu_n\to\int_X f\,\d\mu}$ for any continuous function $f:X\to\R$ with quadratic growth, i.e. such that for some $x_0\in X$ and $c>0$ there holds
\[
|f(x)|\leq c(1+\sfd^2(x,x_0)),\qquad\forall x\in X.
\]
\end{itemize}
\end{theorem}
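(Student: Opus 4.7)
The proof proceeds by the cyclic implications $(i)\Rightarrow (ii)\Rightarrow (iii)\Rightarrow (i)$. The implication $(iii)\Rightarrow (ii)$ is automatic, since both $C_b(X)$ and the function $\sfd^2(\cdot,x_0)$ belong to the class of continuous functions of quadratic growth, so there is no loss in proving the cyclic chain above.

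For $(i)\Rightarrow (ii)$, I would first establish weak convergence via the Kantorovich--Rubinstein inequality: for any bounded Lipschitz $f$,
\[
\Big|\int_X f\,\d\mu_n-\int_X f\,\d\mu\Big|\leq \Lip(f)\,W_1(\mu_n,\mu)\leq \Lip(f)\,W_2(\mu_n,\mu)\to 0,
\]
and a standard Portmanteau-type argument extends this from bounded Lipschitz functions to all $f\in C_b(X)$. For the convergence of second moments, use the reverse triangle inequality in $(\probt X,W_2)$ against the Dirac mass $\delta_{x_0}$:
\[
\big|W_2(\mu_n,\delta_{x_0})-W_2(\mu,\delta_{x_0})\big|\leq W_2(\mu_n,\mu),
\]
combined with the identity $W_2^2(\nu,\delta_{x_0})=\int_X\sfd^2(\cdot,x_0)\,\d\nu$.

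For $(ii)\Rightarrow (iii)$, I would use a truncation. Given $f$ continuous with $|f|\leq c(1+\sfd^2(\cdot,x_0))$, pick a continuous cutoff $\chi_R$ with $0\leq\chi_R\leq 1$, identically $1$ on the ball of radius $R$ and $0$ outside the ball of radius $2R$ about $x_0$. Then $f\chi_R\in C_b(X)$ and $(ii)$ gives $\int f\chi_R\,\d\mu_n\to\int f\chi_R\,\d\mu$, while the tail satisfies
\[
\Big|\int f(1-\chi_R)\,\d\mu_n\Big|\leq c\int_{\{\sfd(\cdot,x_0)>R\}}(1+\sfd^2(\cdot,x_0))\,\d\mu_n,
\]
which tends to $0$ as $R\to\infty$ uniformly in $n$. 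This uniformity is a standard consequence of the fact that weak convergence of $\mu_n\to\mu$ together with convergence of $\int\sfd^2(\cdot,x_0)\,\d\mu_n$ to the finite value $\int\sfd^2(\cdot,x_0)\,\d\mu$ forces uniform integrability of $\sfd^2(\cdot,x_0)$ with respect to $\{\mu_n\}$.

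The main obstacle is $(iii)\Rightarrow(i)$, for which I would argue by precompactness. By Prokhorov's theorem, weak convergence implies tightness of $\{\mu_n\}$, while the previous paragraph provides uniform integrability of $\sfd^2(\cdot,x_0)$. These are exactly the conditions ensuring that $\{\mu_n\}$ is relatively compact in $(\probt X,W_2)$: indeed, after concentrating almost all mass on a compact set by tightness and handling the tail with uniform integrability, one can compare each $\mu_n$ with an approximation and extract $W_2$-convergent subsequences. Any $W_2$-limit $\nu$ of a subsequence is also a weak limit by $(i)\Rightarrow(ii)$, and therefore $\nu=\mu$ by uniqueness of the weak limit. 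Thus every subsequence of $\{\mu_n\}$ admits a further subsequence converging to $\mu$ in $W_2$, whence $W_2(\mu_n,\mu)\to 0$. The delicate point is the precompactness criterion itself, which I would either cite from the literature or establish directly by a diagonal extraction combined with the tail estimates above.
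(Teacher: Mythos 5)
The paper itself offers no proof of this statement: it is quoted as a well-known characterization of $W_2$-convergence (it is, e.g., Proposition 7.1.5 in the cited book of Ambrosio--Gigli--Savar\'e), so there is no in-paper argument to compare yours with; I can only assess your proposal on its own, and it is essentially the standard proof and is correct in structure. The implications $(i)\Rightarrow(ii)$ (Kantorovich--Rubinstein plus the identity $W_2^2(\nu,\delta_{x_0})=\int_X\sfd^2(\cdot,x_0)\,\d\nu$) and $(ii)\Rightarrow(iii)$ (truncation plus uniform integrability of $\sfd^2(\cdot,x_0)$, which indeed follows from weak convergence together with convergence of the second moments by testing against $\sfd^2(\cdot,x_0)\psi_R$ for a cutoff $\psi_R$) are carried out correctly, and $(iii)\Rightarrow(ii)$ is trivial as you say.

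The only caveat concerns $(iii)\Rightarrow(i)$, which is where all the content of the theorem sits: you reduce it to the criterion ``tight $+$ uniformly integrable second moments $\Rightarrow$ relatively compact in $(\probt X,W_2)$'', and then defer that criterion to the literature or to a sketched diagonal argument. That criterion is true, so nothing in your argument fails; but be aware that its proof is essentially the very implication you are trying to establish (weak convergence plus uniform integrability forces $W_2$-convergence), so as written your argument is not self-contained where it matters most. If you want a direct proof, the cleaner route is to work with optimal plans $\ggamma_n$ between $\mu_n$ and $\mu$: tightness of the marginals gives tightness of $(\ggamma_n)$, any weak limit is an optimal plan from $\mu$ to itself and hence concentrated on the diagonal, so $\int \sfd^2\wedge M\,\d\ggamma_n\to0$ for every $M$; the remainder is controlled by $\big(\sfd^2(x,y)-M\big)^+\leq 2\big(\sfd^2(x,x_0)-M/4\big)^++2\big(\sfd^2(y,x_0)-M/4\big)^+$ together with the uniform integrability you already established, yielding $W_2(\mu_n,\mu)\to0$ without invoking any compactness theorem in $\probt X$.
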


\subsection{Optimal transport and heat flow on Riemannian manifolds}

Throughout all the paper $(M,g)$ will be a given compact, $C^\infty$ Riemannian manifold. The canonical volume measure induced by $g$ will be 
denoted by $\vol$. We will sometimes indicate $g(v,w)$ by $v\cdot w$ and $g(v,v)$ by $|v|^2$. The set of Borel probability measures on $M$ is denoted by $\prob M$. All the differential operators that will appear will be relative to the Levi--Civita covariant derivative $\nabla$ associated to the metric $g$, that is, in particular $\mathrm{div}=\mathrm{div}_g$ and $\Delta=\Delta_g$.

We will denote by $(0,+\infty)\times M\times M\ni (t,x,y)\mapsto\rho(t,x,y)\in\R^+$ the heat kernel on $M$ and for every $x\in M$, $t\geq 0$ by $\mu_{t,x}$ the probability measure defined by 
$\mu_{t,x}:=\rho(t,x,\cdot)\,\vol$, for $t>0$ and $\mu_{0,x}:=\delta_x$. 
For $t\geq 0$ we also denote by $\sfh_t:\prob M\to\prob M$ the heat semigroup acting on probability measures, i.e. 
for any $\mu\in\prob M$ and $t\geq 0$ the measure $\sfh_t(\mu)\in\prob M$ is given by
$$
\int_M f(x)\,\d\sfh_t(\mu)(x):=\int_M\int_M f(y)\rho(t,x,y)\,\d\vol(y)\,\d\mu(x),\qquad\forall t\geq 0.
$$
In particular, there holds $\sfh_t(\delta_x)=\mu_{t,x}$.

\begin{theorem}\label{thm:reg}
Let $\eta,\rho:M\to\R$ be two $C^\infty$ functions such that $\int_M\eta\,\d\vol=0$ and $\rho>0$. Then, there exists a unique smooth function $\varphi:M\to\R$ with $\int_M\varphi\,\d\vol=0$ 
which is a solution of the PDE
\[
\eta=\nabla\cdot(\nabla\varphi\,\rho)={\mathrm{div}}(\nabla\varphi\,\rho)=\rho\Delta\varphi+g(\nabla\varphi,\nabla\rho)\,.
\]
Moreover, such a function $\varphi$ smoothly depends on the functions $\eta$ and $\rho$.
\end{theorem}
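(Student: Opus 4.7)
The plan is to recognize the stated equation as a linear second-order elliptic PDE in divergence form with smooth coefficients on the closed manifold $M$ and invoke standard elliptic theory. Set
\[
L_\rho\varphi:=\mathrm{div}(\rho\nabla\varphi).
\]
Since $M$ is compact and $\rho$ is smooth and strictly positive, we have $\rho\geq c>0$ for some constant, so $L_\rho$ is uniformly elliptic with smooth coefficients. Integration by parts on the closed manifold gives
\[
\int_M \psi\,L_\rho\varphi\,\d\vol=-\int_M \rho\,g(\nabla\varphi,\nabla\psi)\,\d\vol,
\]
which shows that $L_\rho$ is formally self-adjoint in $L^2(\vol)$ and that its kernel consists of constants (take $\psi=\varphi$ with $L_\rho\varphi=0$ to get $\nabla\varphi\equiv 0$, using $\rho>0$).

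For existence and uniqueness, I would use the Fredholm alternative for self-adjoint elliptic operators on compact manifolds: the range of $L_\rho$ (viewed between, say, $H^1$ and $H^{-1}$, or between suitable Hölder spaces) is the orthogonal complement of its kernel, i.e.\ the set of functions with zero mean with respect to $\vol$. The assumption $\int_M\eta\,\d\vol=0$ therefore guarantees a solution $\varphi\in H^1(M)$, unique up to additive constants; the additional normalization $\int_M\varphi\,\d\vol=0$ singles out a unique one. Smoothness of $\varphi$ then follows from the standard elliptic bootstrap: local $H^k$-regularity (or equivalently Schauder estimates in $C^{k,\alpha}$) applied iteratively, using that $\eta$ and the coefficients $\rho,\nabla\rho$ are $C^\infty$, yields $\varphi\in C^\infty(M)$.

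For the smooth dependence statement, I would formulate everything between Banach spaces of sufficiently regular functions, e.g.\ $X^k:=\{u\in C^{k,\alpha}(M):\int_M u\,\d\vol=0\}$. Dependence on $\eta$ is linear and hence trivially smooth. For dependence on $\rho$, the map $(\rho,\varphi)\mapsto L_\rho\varphi$ is a smooth (indeed polynomial) map between appropriate $C^{k,\alpha}$ spaces, its partial derivative with respect to $\varphi$ at any $(\rho_0,\varphi_0)$ with $\rho_0>0$ is the invertible operator $L_{\rho_0}\colon X^{k+2}\to X^k$ (by the previous paragraph), so the implicit function theorem in Banach spaces furnishes local smooth dependence $\rho\mapsto\varphi$. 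Since this holds at every admissible $\rho_0$, one gets smoothness globally.

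The main obstacle is the precise meaning of ``smoothly depends'' in the last sentence: one must fix functional-analytic topologies on the spaces of admissible $(\eta,\rho)$ and on the space of solutions $\varphi$, and check that the inverse of $L_\rho$ stays bounded in those norms uniformly on compacta in $\rho$. This is entirely routine once the $C^{k,\alpha}$ (or $H^k$) setup is in place, with the Schauder (or elliptic $H^k$) estimates providing the required continuity of the inverse; no genuinely new idea is needed beyond the Fredholm/implicit function machinery.
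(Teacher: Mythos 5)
Your proof is correct and rests on the same underlying machinery (elliptic theory on a compact manifold plus a bootstrap), but it is organized differently from the paper's. The paper divides through by $\rho$ to recast the equation as $\Delta\varphi = -g(\nabla\varphi,\nabla\log\rho)+\eta/\rho$, a Laplacian perturbed by first-order terms, and then appeals to the ``Euclidean case'' for $W^{1,2}$ solvability before bootstrapping. You instead keep the operator in divergence form $L_\rho\varphi=\mathrm{div}(\rho\nabla\varphi)$, exploit its self-adjointness and one-dimensional kernel, and invoke the Fredholm alternative, which makes the role of the hypothesis $\int_M\eta\,\d\vol=0$ completely transparent (it is exactly orthogonality to the kernel). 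Your version also spells out the smooth-dependence claim via the implicit function theorem between $C^{k,\alpha}$ spaces of zero-mean functions, a point the paper's proof does not address at all. Two small remarks: you should check that $L_\rho$ actually maps into the zero-mean subspace $X^k$, which holds because $\int_M\mathrm{div}(\rho\nabla\varphi)\,\d\vol=0$ on a closed manifold, and the argument that the kernel is exactly the constants tacitly uses that $M$ is connected, an assumption both you and the paper leave implicit.
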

\begin{proof}
By the uniform strict positivity of $\rho\in C^{\infty}$, as $M$ is compact, the above PDE is equivalent to the linear problem
$$
\Delta\varphi=-g(\nabla\varphi,\nabla\log{\rho})+\eta/\rho\,,
$$
then, the existence/uniqueness of a solution in $W^{1,2}(M)$ follows as in the Euclidean case. Expressing the Laplacian and the metric $g$ in local coordinates, the regularity of the solution is then obtained by a standard bootstrap argument, see for instance the book of Gilbarg and Trudinger~\cite{Gilbarg-Trudinger83}.
\end{proof}

\begin{theorem}[Backward uniqueness of the heat flow]\label{thm:injheat}
Let $(0,+\infty)\times M\ni (t,x)\mapsto f_t(x)$ be a solution of the heat equation 
\[
\frac{\rm d}{\rm dt}f_t=\Delta f_t,
\]
such that for some $t_0\geq 0$ there holds $f_{t_0}\equiv 0$. Then, $f_t\equiv 0$ for any $t\geq 0$.
\end{theorem}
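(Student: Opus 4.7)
The plan is to exploit the log-convexity of the $L^2$-norm of $f_t$, an Agmon--Nirenberg type observation that adapts verbatim to a compact manifold. Setting
\[
N(t):=\int_M f_t^2\,\d\vol,\qquad t>0,
\]
I aim to show that $\log N$ is convex on any open interval where $N>0$, and then deduce from $N(t_0)=0$ that $N\equiv 0$ on $(0,\infty)$. Standard parabolic regularity for the heat kernel on a compact smooth manifold ensures that $(t,x)\mapsto f_t(x)$ is smooth on $(0,\infty)\times M$, so $N$ is smooth on $(0,\infty)$ and every manipulation below is legal.

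For the convexity step I would compute, via integration by parts on $M$ (no boundary terms appear by compactness),
\[
N'(t)=-2\int_M|\nabla f_t|^2\,\d\vol,\qquad N''(t)=4\int_M(\Delta f_t)^2\,\d\vol,
\]
and then combine $\int_M|\nabla f_t|^2\,\d\vol=-\int_M f_t\,\Delta f_t\,\d\vol$ with Cauchy--Schwarz to obtain $(N'(t))^2\le N(t)\,N''(t)$, which is equivalent to $(\log N)''\ge0$ on $\{N>0\}$.

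To conclude the backward direction I would argue by contradiction. If $N(t^*)>0$ for some $t^*<t_0$, continuity produces a maximal open interval $(a,b)\subset(0,t_0]$ containing $t^*$ on which $N>0$, and with $N(b)=0$. Convexity of $\log N$ on $(a,b)$ forces the chord slopes $s\mapsto\tfrac{\log N(s)-\log N(t^*)}{s-t^*}$ to be monotone non-decreasing in $s$, so they cannot tend to $-\infty$ as $s\to b^-$, contradicting $\log N(s)\to-\infty$. The forward direction $t\ge t_0$ is immediate from linearity and the uniqueness of the forward heat flow starting from the null datum.

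The only genuinely delicate point I expect is checking continuity of $N$ at the potentially singular time $t_0$ and justifying the interchange of derivatives and integrals, but on a compact smooth manifold these are entirely standard. A clean alternative which sidesteps these regularity questions is to expand $f_t=\sum_n a_n(t)\phi_n$ in an $L^2$-orthonormal basis of Laplacian eigenfunctions, $\Delta\phi_n=-\lambda_n\phi_n$ with $\lambda_n\ge0$: the equation forces $a_n(t)=a_n(s)\,e^{-\lambda_n(t-s)}$ for every $s,t>0$, so $a_n(t_0)=0$ for all $n$ immediately propagates to $a_n\equiv0$ and hence $f_t\equiv0$.
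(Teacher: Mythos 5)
Both of your approaches are correct, and you are right that on a compact manifold they are entirely standard; the paper instead invokes analyticity of the heat semigroup in $L^2(M,\vol)$ (with the quantitative bound $\|\Delta^{(n)}\h_t(g)\|_{L^2}\le \|g\|_{L^2}\,n^n/t^n$ worked out in the proof of Proposition~\ref{le:inj}), from which backward uniqueness follows because $f_{t_0}\equiv 0$ kills all $t$-derivatives at $t_0$ and analytic continuation then propagates the vanishing to all of $(0,\infty)$.

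Your second route --- expanding $f_t=\sum_n a_n(t)\phi_n$ in the eigenbasis and reading off $a_n(t)=a_n(s)e^{-\lambda_n(t-s)}$ --- is essentially the same idea as the paper's, since on a compact manifold the spectral decomposition is exactly what makes the semigroup analytic; the content is the same, your version is just more explicit. Your first route, the Agmon--Nirenberg log-convexity of $N(t)=\int_M f_t^2\,\d\vol$, is a genuinely different argument: it uses only the structure of the equation and Cauchy--Schwarz (the computations $N'=-2\int|\nabla f_t|^2$, $N''=4\int(\Delta f_t)^2$, and $(N')^2\le N\,N''$ are correct), needs no spectral theory or analyticity, and for that reason is more robust --- it extends to variable-coefficient or time-dependent parabolic operators where analyticity of the semigroup is unavailable. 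The one thing you flag as delicate, continuity of $N$ at $t_0$, is actually a non-issue here: the nontrivial case is $t_0>0$, which lies in the interior of $(0,\infty)$ where $f_t$ is jointly smooth, so $N$ is smooth there; the case $t_0=0$ is just forward uniqueness and needs no backward argument at all.
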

\begin{proof}
This is a consequence of the fact that  the heat semigroup is analytic in $L^2(M,\vol)$, see the details in the proof of Proposition~\ref{le:inj}.
\end{proof}

Later on, we will find useful the following  lemma concerning $c$--concave functions on $M$ (for  a proof, see for instance Lemma 1.34 in~\cite{Ambrosio-Gigli11}).
\begin{lemma}\label{le:cconc}
Let $(M,g)$ be a smooth, complete Riemannian manifold and $\varphi\in C^\infty_c(M)$. Then, there exists some $\overline \eps>0$ such that for $|\eps|\leq \overline\eps$ the following facts are true:
\begin{itemize}
\item[(i)] The function $\varphi_\eps:=\eps\varphi$ is $c$--concave, and $\varphi^c_\eps\in C^\infty_c(M)$.
\item[(ii)] The maps $x\mapsto T(x):=\exp_{x}(-\nabla\varphi_\eps(x))$ and $y\mapsto S(y):=\exp_{y}(-\nabla\varphi^c_\eps(y))$ are smooth and each one inverse of the other.
\item[(iii)] For every $x\in M$ the curve $s\mapsto \exp_{x}(-s\nabla\varphi_\eps(x))$ is the unique minimizing geodesic from $x$ to $T(x)$. Similarly, for any $y\in M$ the curve $s\mapsto \exp_{y}(-s\nabla\varphi^c_\eps(y))$ is the unique minimizing geodesic from $y$ to $S(y)$.
\item[iv)] The following two duality formulas hold:
\begin{align}
\label{eq:ctra}
\varphi_\eps^c(T(x))&=\tfrac12{|\nabla \varphi_\eps|_g^2(x)}{}-\varphi_\eps(x),\qquad\forall x\in M,\\
\label{eq:cdual}
\varphi_\eps(S(y))&=\tfrac12{|\nabla \varphi^c_\eps|_g^2(y)}{}-\varphi^c_\eps(y),\qquad\forall y\in M.
\end{align}
\end{itemize}
Such $\ov \eps>0$ depends only on the supremum of $\vert\varphi\vert,\vert\nabla\varphi\vert_g,\vert\nabla^2\varphi\vert_g$, on the modulus of the Riemann tensor $\Riem$ of $M$ 
and on the infimum of the injectivity radius in the compact set ${\rm supp}\,\varphi$.
\end{lemma}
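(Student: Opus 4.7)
The strategy is to construct $T$, $S$, and $\varphi_\eps^c$ explicitly and then verify by hand that the resulting objects satisfy all the claimed identities. Set $\varphi_\eps := \eps\varphi$ and let $K:=\supp\varphi$, $r_0>0$ the infimum of the injectivity radius on a neighborhood of $K$, and $C$ a bound for $|\Riem|$ there. I would choose $\overline\eps$ so small that $|\eps|\|\varphi\|_{C^2}\leq\min(r_0/4,\,1/(10C))$ and all the perturbative statements below hold uniformly. Because $\varphi_\eps$ and its derivatives vanish outside $K$, every object we construct will agree with the trivial one off a compact set.

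First I would establish (ii) and (iii). The map $T(x):=\exp_x(-\nabla\varphi_\eps(x))$ is smooth, equals the identity on $M\setminus K$, and satisfies $|\nabla\varphi_\eps(x)|<r_0$, so the defining curve $s\mapsto\exp_x(-s\nabla\varphi_\eps(x))$ stays strictly inside the injectivity ball at $x$; this already gives (iii) and that $T(M)\subseteq M$. A Jacobi-field computation yields $dT_x=\Id + O(|\eps|)$ uniformly on $M$, so $T$ is a local diffeomorphism. Being a local diffeomorphism of the compact manifold $M$ to itself which agrees with $\Id$ on the open set $M\setminus K$, $T$ is a covering map of degree $1$, hence a global diffeomorphism; I set $S:=T^{-1}\in C^\infty(M,M)$, and the analogue of (iii) for $S$ follows by the same argument applied to $\varphi_\eps^c$ defined below.

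Next I would define $\varphi_\eps^c$ by the formula suggested by \eqref{eq:ctra}:
\[
\varphi_\eps^c(y):=\tfrac12\,|\nabla\varphi_\eps|_g^2(S(y)) - \varphi_\eps(S(y)).
\]
This is smooth and, since $S(y)=y$ whenever $y\notin T(K)$, it is supported in the compact set $T(K)$. Identity \eqref{eq:ctra} at $y=T(x)$ holds since $\sfd^2(x,T(x))=|\nabla\varphi_\eps(x)|^2$ by (iii), and \eqref{eq:cdual} follows by substituting $y=T(x)$. It remains to prove the $c$-concavity, i.e.\ that
\[
\varphi_\eps(x)=\inf_{y\in M}\Big(\tfrac{\sfd^2(x,y)}{2}-\varphi_\eps^c(y)\Big), \qquad \varphi_\eps^c(y)=\inf_{x\in M}\Big(\tfrac{\sfd^2(x,y)}{2}-\varphi_\eps(x)\Big),
\]
with the infima attained respectively at $y=T(x)$ and $x=S(y)$.

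The main obstacle is the global minimization of $F_y(x):=\sfd^2(x,y)/2-\varphi_\eps(x)$. Where $\sfd(x,y)<r_0$ the function $F_y$ is smooth, with $\nabla_x F_y(x)=-\exp_x^{-1}(y)-\nabla\varphi_\eps(x)$, so $S(y)$ is a critical point, and the classical identity $\nabla^2_x\,\sfd^2(\cdot,y)/2=\Id+O(|\Riem|\sfd^2)$ together with $\|\nabla^2\varphi_\eps\|\leq|\eps|\|\varphi\|_{C^2}$ yields a positive-definite Hessian at $S(y)$, so $S(y)$ is a strict local minimum. Any other smooth critical point $x$ would satisfy $T(x)=y$, forcing $x=S(y)$ by injectivity of $T$. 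Points on the cut locus of $y$, or with $\sfd(\cdot,y)\geq r_0$, give $F_y\geq r_0^2/2-|\eps|\|\varphi\|_\infty$, which for $|\eps|$ small is strictly larger than $F_y(S(y))\leq |\eps|^2\|\varphi\|_{C^1}^2/2+|\eps|\|\varphi\|_\infty$. Since $M$ is compact, $F_y$ attains its minimum, and the comparison forces the minimum to be exactly $S(y)$, giving the second $c$-concavity identity. The first follows symmetrically by interchanging the roles of $\varphi_\eps$ and $\varphi_\eps^c$ and of $T$ and $S$, completing the proof with $\overline\eps$ depending only on the quantities listed in the statement.
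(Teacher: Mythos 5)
The paper does not contain its own proof of this lemma: it simply cites Lemma 1.34 of Ambrosio--Gigli (2011) and moves on, so a direct comparison with ``the paper's own proof'' is not possible. Your proof is a self-contained argument, and the overall strategy -- show that $T$ is a proper local diffeomorphism homotopic to the identity, hence a degree-one covering and thus a global diffeomorphism; define $\varphi_\eps^c$ by an explicit formula; verify $c$-concavity through a quantitative critical-point analysis separating the injectivity ball from its complement -- is sound and standard. The Hessian comparison $\nabla^2_x\,\sfd^2(\cdot,y)/2=\Id+O(|\Riem|\sfd^2)$ together with $\|\nabla^2\varphi_\eps\|=O(|\eps|)$ does give a strict local minimum, and the far-field lower bound on $F_y$ correctly rules out minimizers beyond distance $r_0$.

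There is, however, a genuine gap in the treatment of assertion (ii). You set $S:=T^{-1}$ and never verify that this coincides with the map $y\mapsto\exp_y(-\nabla\varphi_\eps^c(y))$ required by the statement; you only say that ``the analogue of (iii) for $S$ follows by the same argument applied to $\varphi_\eps^c$,'' but the $\varphi_\eps^c$ you define is expressed through $T^{-1}$, not through an exponential map, so the identity is not automatic. To close this, once you know the infimum in $\varphi_\eps^c(y)=\inf_x\big(\sfd^2(x,y)/2-\varphi_\eps(x)\big)$ is attained precisely at $x=S(y)$, the envelope theorem (or first-order variation in $y$) gives $\nabla\varphi_\eps^c(y)=\nabla_y\big(\sfd^2/2\big)(S(y),y)=-\exp_y^{-1}(S(y))$, whence $\exp_y(-\nabla\varphi_\eps^c(y))=S(y)$ as needed. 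The same computation, differentiated once more, yields $\|\nabla^2\varphi_\eps^c\|=O(|\eps|)$ with constants controlled by $\|\nabla^2\varphi\|$, $|\Riem|$ and $DS=(DT)^{-1}=\Id+O(|\eps|)$; this estimate is needed before you can invoke the ``symmetric'' argument for the dual $c$-concavity identity, and you never state it. Two smaller points: the lemma assumes $M$ complete, not compact, so the phrases ``the compact manifold $M$'' and ``since $M$ is compact, $F_y$ attains its minimum'' must be replaced by the observation that $T=\Id$ outside the compact set $K$ and that $F_y(x)\to+\infty$ as $\sfd(x,y)\to\infty$ because $\varphi_\eps$ has compact support; and your cut-locus estimate should be stated relative to the injectivity radius at $y$ (not just at $x$), which is fine because for small $\eps$ the relevant $y$'s stay in the neighbourhood of $K$ where $r_0$ controls the injectivity radius, while for $y$ far from $K$ the minimizer is trivially $y$ itself.
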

We remark that although in this paper we will let the metric $g$ vary in time, when speaking about absolute continuity of a curve of measures $t\mapsto \mu_t$ and about its metric speed $|\dot\mu_t|$, we will always refer to  the quadratic transportation distance $W_2$ built on top of the Riemannian distance induced by the initial metric tensor $g$. 

Absolutely continuous curves of measures are related to the continuity equation via the following result.
\begin{theorem}\label{thm:charac}
Let $s\mapsto \mu_s\in\prob{M}$ be a continuous curve w.r.t. weak convergence of measures. Then, the following facts are equivalent:
\begin{itemize}
\item[(i)] The curve $s\mapsto\mu_s$ is absolutely continuous w.r.t. $W_2$.
\item[(ii)] For a.e. $s$ there exists $v_s\in\overline{\{\nabla\varphi\ :\ \varphi\in C^\infty(M)\}}^{L^2(\mu_s)}$ such that the continuity equation
\[
\frac{\rm d}{\rm ds}\mu_s+\nabla\cdot(v_s\mu_s)=0
\]
holds in the sense of distributions.
\end{itemize}
In this case, the vector fields $v_s$ are uniquely defined for a.e. $s$ and there holds $|\dot\mu_s|^2=\int_M|v_s|^2\,\d\mu_s$ for a.e. $s$.
\end{theorem}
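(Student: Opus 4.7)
My plan is to prove the two implications separately, extracting the equality $|\dot\mu_s|^2=\int_M |v_s|^2\,\d\mu_s$ from the two one-sided bounds $|\dot\mu_s|\le \|v_s\|_{L^2(\mu_s)}$ (from (ii) $\Rightarrow$ (i)) and $\|v_s\|_{L^2(\mu_s)}\le |\dot\mu_s|$ (from (i) $\Rightarrow$ (ii)). Uniqueness of $v_s$ within the prescribed closed subspace of $L^2(\mu_s;TM)$ is then immediate: if two such fields satisfied the continuity equation, their difference would be orthogonal in $L^2(\mu_s)$ to every $\nabla\varphi$, hence to the entire closure it belongs to, forcing it to vanish.

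For (ii) $\Rightarrow$ (i), the aim is the Benamou--Brenier-type inequality
\begin{equation*}
W_2^2(\mu_s,\mu_t)\le (t-s)\int_s^t\int_M|v_r|^2\,\d\mu_r\,\d r,\qquad s<t.
\end{equation*}
I would mollify $v$ in space and time, working in finitely many charts via a partition of unity on the compact manifold $M$, to obtain a smooth time-dependent vector field $\tilde v$ whose classical characteristic flow $\phi_{s,t}$ is globally well defined. Standard uniqueness for the continuity equation with smooth coefficients gives $\tilde\mu_t=(\phi_{s,t})_\sharp\tilde\mu_s$, so Fubini combined with the length estimate $\sfd(x,\phi_{s,t}(x))^2\le(t-s)\int_s^t|\tilde v_r(\phi_{s,r}(x))|^2\,\d r$ yields the displayed inequality for the regularized measures. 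Passing to the limit in the regularization, dividing by $t-s$, and letting $t\to s$ produces both absolute continuity of $s\mapsto\mu_s$ and the pointwise bound $|\dot\mu_s|^2\le\int_M|v_s|^2\,\d\mu_s$ for a.e. $s$.

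For (i) $\Rightarrow$ (ii), fix $\varphi\in C^\infty(M)$ and set $F_\varphi(s):=\int_M\varphi\,\d\mu_s$. Testing against an optimal plan $\ggamma$ between $\mu_s$ and $\mu_{s+h}$ and using, for any minimizing geodesic from $x$ to $y$, the Taylor estimate $\varphi(y)-\varphi(x)=g(\nabla\varphi(x),\dot\gamma(0))+O_{\varphi}(\sfd^2(x,y))$, one obtains
\begin{equation*}
|F_\varphi(s+h)-F_\varphi(s)|\le \Big(\int_M|\nabla\varphi|^2\,\d\mu_s\Big)^{1/2}W_2(\mu_s,\mu_{s+h})+C_\varphi\,W_2^2(\mu_s,\mu_{s+h}).
\end{equation*}
Since $W_2(\mu_s,\mu_{s+h})/|h|\to|\dot\mu_s|$ for a.e. $s$ by \eqref{eq:metricspeed}, dividing by $|h|$ and letting $h\to 0$ shows $|F_\varphi'(s)|\le\|\nabla\varphi\|_{L^2(\mu_s)}|\dot\mu_s|$ a.e. A diagonal argument, using separability of $C^\infty(M)$ in a suitable topology, selects a single full-measure set on which this bound holds simultaneously for all $\varphi$. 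For such $s$, the linear map $\nabla\varphi\mapsto F_\varphi'(s)$ on $\{\nabla\varphi:\varphi\in C^\infty(M)\}\subset L^2(\mu_s;TM)$ is bounded of norm at most $|\dot\mu_s|$; extending by continuity to the closure and applying Riesz gives a unique $v_s$ in that closure with $\|v_s\|_{L^2(\mu_s)}\le|\dot\mu_s|$ satisfying $F_\varphi'(s)=\int_M g(\nabla\varphi,v_s)\,\d\mu_s$ for every $\varphi\in C^\infty(M)$, which is exactly the continuity equation in the distributional sense.

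The most delicate step is the regularization in (ii) $\Rightarrow$ (i): one must verify that the mollified density and vector field still satisfy a continuity equation on the manifold, that the classical flow exists and transports the regularized measures, and that $\int|v_r|^2\d\mu_r$ is lower semicontinuous under the approximations so the limit estimate is sharp. On a compact $C^\infty$ manifold these are classical, but need a careful chart-based setup. The separability/diagonal argument in the converse direction is a routine technicality.
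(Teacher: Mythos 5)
The paper itself offers no proof of this theorem: it simply cites Theorem~8.3.1 of Ambrosio--Gigli--Savar\'e for the Euclidean case and suggests reducing the manifold case via Nash embedding (or citing Villani, Theorem~13.8). Your plan reconstructs the actual argument behind those references, and the overall architecture -- two one-sided bounds pinched together, uniqueness by orthogonality -- is exactly right. The (i)$\Rightarrow$(ii) direction is essentially correct as written; the one omission is that you should first observe that $F_\varphi$ is genuinely absolutely continuous (not merely a.e.\ differentiable), which follows from the Lipschitz bound $|F_\varphi(s+h)-F_\varphi(s)|\le\|\nabla\varphi\|_\infty W_2(\mu_s,\mu_{s+h})\le\|\nabla\varphi\|_\infty\int_s^{s+h}|\dot\mu_r|\,\d r$; only then is the a.e.\ derivative the distributional derivative, so that the Riesz representative really encodes the continuity equation distributionally.

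The genuinely weak link is the regularization in (ii)$\Rightarrow$(i), and your formulation contains a gap. You propose to ``mollify $v$'' and then invoke uniqueness for the continuity equation with smooth coefficients; but the pair (original $\mu_s$, mollified $\tilde v_s$) does not satisfy any continuity equation -- you must mollify $\mu_s$ and the momentum $v_s\mu_s$ \emph{jointly and compatibly}, and then define $\tilde v_s$ as the quotient of the mollified momentum by the mollified density. This is exactly the Ambrosio--Gigli--Savar\'e device, and the crucial accompanying estimate, $\int|\tilde v_s|^2\,\d\tilde\mu_s\le\int|v_s|^2\,\d\mu_s$, follows from joint convexity of $(\rho,m)\mapsto|m|^2/\rho$ via Jensen -- this is what makes the limit estimate sharp, not lower semicontinuity alone. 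Moreover, carrying this out ``chart by chart with a partition of unity'' is more delicate than you acknowledge: the Riemannian divergence does not commute with Euclidean convolution in a chart, and the cutoffs introduce commutator errors in the transport term. The paper's suggested route circumvents these issues: embed $(M,g)$ isometrically in $\R^N$ by Nash, push the measures and extend the vector fields to a tubular neighbourhood, mollify with Euclidean Gaussians where the AGS device works cleanly, and project back using compactness of $M$. Alternatively, Lisini's superposition principle for solutions of the continuity equation gives the Benamou--Brenier inequality directly without any mollification. Either of these would replace the shakiest part of your sketch with a self-contained argument.
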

\begin{proof}
See Theorem 8.3.1 in~\cite{AmbrosioGigliSavare05} for the case $M=\R^d$. The case of general Riemannian manifolds then follows easily from Nash embedding theorem, see e.g. Theorem 2.29 in~\cite{Ambrosio-Gigli11} or Theorem 13.8 in~\cite{Villani09}.
\end{proof}
We also recall that the distance $W_2$ is 
``contracting" under a lower Ricci bound (see~\cite{Sturm-VonRenesse05})
\begin{theorem}\label{thm:contraction}
Let $\mu,\nu\in\prob{M}$. Then, for every $t\geq 0$ there holds
\[
W_2(\sfh_t(\mu),\sfh_t(\nu))\leq e^{-Kt}W_2(\mu,\nu),
\]
where $K$ is a global bound from below on the eigenvalues of the Ricci tensor $\Ric$ of $M$. 

In particular, if $s\mapsto\gamma_s\in M$ is a Lipschitz curve, the curve $s\mapsto\mu_{t,\gamma_s}\in\prob M$ is Lipschitz w.r.t. $W_2$ and there holds
\begin{equation}
\label{eq:controllobase}
|\dot \mu_{t,\gamma_s}|\leq e^{-Kt}|\gamma_s'|,
\end{equation}
for a.e. $s$, where $|\dot \mu_{t,\gamma_s}|$ denotes the metric speed of the curve.
\end{theorem}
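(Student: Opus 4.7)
The plan is to deduce the estimate from the well-known equivalence, due to Sturm--von Renesse, between a lower Ricci bound and $K$-displacement convexity of relative entropy, combined with the Jordan--Kinderlehrer--Otto interpretation of the heat flow as the $W_2$-gradient flow of entropy.

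Concretely, my first step would be to establish the equivalence between $\Ric\geq K$ and the $K$-convexity of $\mathrm{Ent}_{\vol}$ along every $W_2$-geodesic in $\probt M$, where $\mathrm{Ent}_{\vol}(\rho\,\vol):=\int_M \rho\log\rho\,\d\vol$. The standard argument (carried out in~\cite{Sturm-VonRenesse05}) proceeds via McCann displacement interpolation: given absolutely continuous $\mu_0,\mu_1\in\probt M$, Brenier--McCann provides an optimal transport map of the form $T=\exp(-\nabla\varphi)$ with $\varphi$ being $c$-concave as in Lemma~\ref{le:cconc}; I would interpolate via $\mu_s:=(\exp(-s\nabla\varphi))_\sharp\mu_0=\rho_s\,\vol$ and compute $\frac{\d^2}{\d s^2}\mathrm{Ent}_{\vol}(\mu_s)$ by combining the change of variables formula with the Jacobi equation along the geodesic flow of $-\nabla\varphi$. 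A Bochner-type identity then isolates the term $\int\Ric(\nabla\varphi,\nabla\varphi)\,\d\mu_s$ plus a non-negative squared-Hessian contribution, which yields $\frac{\d^2}{\d s^2}\mathrm{Ent}_{\vol}(\mu_s)\geq K\int|\nabla\varphi|^2\,\d\mu_s=K\,W_2^2(\mu_0,\mu_1)$.

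Second, I would invoke the Jordan--Kinderlehrer--Otto theorem asserting that $t\mapsto\sfh_t(\mu)$ is the $W_2$-gradient flow of $\mathrm{Ent}_{\vol}$ in $\probt M$. Coupling the $K$-convexity of the driving functional with the abstract contraction principle for gradient flows of $K$-geodesically convex functionals on metric spaces (Chapter 4 of~\cite{AmbrosioGigliSavare05}) immediately produces $W_2(\sfh_t\mu,\sfh_t\nu)\leq e^{-Kt}W_2(\mu,\nu)$ for all $\mu,\nu\in\prob M$, using that $\probt M=\prob M$ by compactness of $M$.

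For the second assertion, I would apply the contraction to the Dirac masses $\delta_{\gamma_s},\delta_{\gamma_{s'}}$ and use $W_2(\delta_x,\delta_y)=\sfd(x,y)$ together with $\sfh_t(\delta_{\gamma_s})=\mu_{t,\gamma_s}$ to deduce
\[
W_2(\mu_{t,\gamma_s},\mu_{t,\gamma_{s'}})\leq e^{-Kt}\sfd(\gamma_s,\gamma_{s'})\leq e^{-Kt}\int_s^{s'}|\gamma'_r|\,\d r,
\]
for every $s<s'$, so that $r\mapsto e^{-Kt}|\gamma'_r|$ is admissible in~\eqref{eq:accurve}; the minimality property of the metric speed then yields~\eqref{eq:controllobase} almost everywhere. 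The main obstacle I expect is the first step, the Otto-calculus computation of the second derivative of the entropy along displacement interpolation, which is where the Riemannian geometry actually enters; the gradient-flow identification and the final Dirac-mass application are comparatively routine.
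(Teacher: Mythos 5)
The paper itself offers no proof: it simply records that the $K$-contraction ``is a well known consequence of the lower bound on the Ricci tensor'' and cites \cite{Sturm-VonRenesse05}. Your proposal, by contrast, is a genuine proof sketch, and it is mathematically sound, but it takes a more roundabout route than the one in the cited reference. In \cite{Sturm-VonRenesse05} the implication $\Ric\geq K\Rightarrow W_2$-contraction is proved directly: first one derives the pointwise gradient estimate $|\nabla P_t f|\leq e^{-Kt}P_t|\nabla f|$ by Bakry--\'Emery $\Gamma_2$-calculus (Bochner's formula), and then the $W_2$-contraction follows from this via Kantorovich--Rubinstein duality, with no need to pass through displacement convexity or gradient-flow theory. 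Your plan instead goes through the equivalence $\Ric\geq K\Leftrightarrow K$-displacement convexity of $\mathrm{Ent}_{\vol}$ and the JKO identification of the heat flow as the $W_2$-gradient flow of the entropy. That route also works, but it is heavier machinery for the same conclusion.

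There is one genuine soft spot in your middle step that you should be aware of. ``$K$-geodesic convexity of the driving functional'' plus ``the flow is the gradient flow'' does \emph{not}, in a general metric space, automatically yield the $e^{-Kt}$-contraction: the contraction results in Chapter~4 of \cite{AmbrosioGigliSavare05} require either convexity along \emph{generalized} geodesics (a notion that exploits the linear structure of $\R^d$ and is not directly available on $\probt M$ for a curved base) or, more generally, that the flow satisfy the Evolution Variational Inequality $\mathrm{EVI}_K$. For the heat flow on a compact Riemannian manifold with $\Ric\geq K$ the $\mathrm{EVI}_K$ does hold, but establishing it is nontrivial and is precisely where the real analytic work hides --- it is not simply ``$K$-convexity $+$ JKO $\Rightarrow$ contraction''. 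If you want to follow your route you should explicitly invoke the $\mathrm{EVI}_K$ characterization (or prove convexity along an appropriate substitute for generalized geodesics) rather than appeal to abstract $K$-geodesic convexity alone. Your final step, deducing the metric-speed estimate \eqref{eq:controllobase} by applying the contraction to Dirac masses and using minimality of the metric speed, is correct and matches the paper's one-line remark.
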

\begin{proof}
The above $K$--contraction property of the distance $W_2$ is a well known consequence of the lower bound on the Ricci tensor. 
It immediately implies the estimate~\eqref{eq:controllobase} for Lipschitz curves.
\end{proof}

We conclude recalling the definition of the Sasaki metric tensor $\overline{g}$ on the tangent bundle $TM$ of $(M,g)$. Given $(x,v)\in TM$ and $V_1,V_2\in T_{(x,v)}TM$, we find two smooth curves $t\mapsto (x_{i,t},v_{i,t})\in TM$ such that $\frac{\d}{\d t}(x_{i,t},v_{i,t})\res{t=0}=V_i$, $i=1,2$. Then, ${\overline g}(V_1,V_2)$ is defined as
\[
{\overline g}(V_1,V_2):=g(x_{1,0}',x'_{2,0})+g(\nabla_{x'_{1,0}}v_{1,t},\nabla_{x'_{2,0}}v_{2,t}),
\]
where by $\nabla_{x'_{i,0}}v_{i,t}$ we intend the covariant derivative (w.r.t. $g$) of the vector field $t\mapsto v_{i,t}$ along the curve $t\mapsto x_{i,t}$ at time $t=0$, $i=1,2$. It is readily checked that this is a good definition and that, denoting by ${\overline{\sfd}}$ the distance on $TM$ induced by ${\overline g}$, there holds
\begin{equation}
\label{eq:trivial}
{\overline{\sfd}}^2\big((y,w),(x,0)\big)\leq \sfd^2(y, x)+g(w,w),\qquad\forall x,y\in M,\ w\in T_yM.
\end{equation}

\section{Definition of the Flow}
We start collecting some basic consequences of Theorems~\ref{thm:reg},
\ref{thm:injheat}.

\begin{proposition}\label{prop:defphi}
Let $t>0$, $x\in M$ and $v\in T_xM$. Then, there exists a unique  $C^\infty$ function $\varphi_{t,x,v}:M\to \R$ such that $\int_M\varphi_{t,x,v}\,\d\vol=0$ and
\begin{equation}
\label{eq:defvarphi}
\nabla_x\rho(t,x,y)\cdot v=-\nabla_y\cdot(\nabla\varphi_{t,x,v}(y)\rho(t,x,y)).
\end{equation}
Such $\varphi_{t,x,v}$ smoothly depends on the data $t,x,v$.

Moreover, if $v\neq 0$, then $\nabla\varphi_{t,x,v}$ is not identically zero.
\end{proposition}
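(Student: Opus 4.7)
My plan is to reduce the existence, uniqueness and smooth dependence statements to Theorem~\ref{thm:reg}, and then handle the non-vanishing of $\nabla\varphi_{t,x,v}$ separately via the backward uniqueness of the heat flow. Specifically, I apply Theorem~\ref{thm:reg} with density $y\mapsto \rho(t,x,y)$ and source $\eta(y):=-\nabla_x\rho(t,x,y)\cdot v$. The density is smooth and strictly positive on the compact manifold $M$ by standard heat kernel estimates, and the source has zero mean because
\[
\int_M\nabla_x\rho(t,x,y)\cdot v\,\d\vol(y)=v\cdot\nabla_x\int_M\rho(t,x,y)\,\d\vol(y)=v\cdot\nabla_x(1)=0.
\]
This yields the unique smooth zero-mean $\varphi_{t,x,v}$ solving~\eqref{eq:defvarphi}. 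For the smooth dependence on $(t,x,v)$, I use that the heat kernel is jointly smooth in $(t,x,y)$ on $(0,\infty)\times M\times M$, so both $\eta$ and the density depend smoothly on $(t,x,v)$ (linearly in $v$), and the smooth dependence clause of Theorem~\ref{thm:reg} transfers this regularity to $\varphi_{t,x,v}$.

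For the final assertion I argue by contraposition. Assume $\nabla\varphi_{t,x,v}\equiv 0$; by connectedness of $M$ and the zero-mean normalization this forces $\varphi_{t,x,v}\equiv 0$, so~\eqref{eq:defvarphi} gives that the function $F(\tau,y):=\nabla_x\rho(\tau,x,y)\cdot v$ vanishes identically in $y$ at $\tau=t$. Because $\rho$ solves $\partial_\tau\rho=\Delta_y\rho$ and differentiation in $x$ along $v$ commutes with $\partial_\tau$ and $\Delta_y$, the function $F$ itself satisfies the heat equation in $(\tau,y)$; Theorem~\ref{thm:injheat} therefore yields $F\equiv 0$ on $(0,+\infty)\times M$.

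To then extract information on $v$, I send $\tau\to 0^+$. For every $f\in C^\infty(M)$ the function $u_\tau(x):=(\sfh_\tau f)(x)=\int_M f(y)\rho(\tau,x,y)\,\d\vol(y)$ solves the heat equation with smooth initial datum $f$ on the closed manifold $M$ and therefore extends smoothly up to $\tau=0$ with $u_0=f$, giving $\nabla u_\tau(x)\to\nabla f(x)$ as $\tau\to 0^+$. On the other hand, differentiating under the integral sign,
\[
v\cdot\nabla u_\tau(x)=\int_M f(y)\,F(\tau,y)\,\d\vol(y)=0
\]
for every $\tau>0$, so in the limit $v\cdot\nabla f(x)=0$ for every $f\in C^\infty(M)$, which forces $v=0$.

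The existence, uniqueness and smoothness parts are thus almost a direct application of Theorem~\ref{thm:reg}; the main point I expect to have to justify with care is the last step, where the backward uniqueness of Theorem~\ref{thm:injheat} must be combined with a passage to the limit $\tau\to 0^+$ inside the $x$-gradient. All the needed interchanges (differentiation under the integral sign and continuity of $\nabla_x$ up to $\tau=0$) are however standard on a compact smooth manifold with the heat kernel at hand.
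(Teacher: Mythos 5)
Your proposal is correct and follows essentially the same route as the paper: reduce existence, uniqueness, smoothness and smooth dependence to Theorem~\ref{thm:reg} (after checking the source term has zero mean), and for the non-vanishing claim argue by contraposition, using backward uniqueness (Theorem~\ref{thm:injheat}) applied to $F(\tau,y)=\nabla_x\rho(\tau,x,y)\cdot v$ and then sending $\tau\to 0^+$ to conclude $v=0$. The paper compresses this last limit into the remark ``easily implies, taking $t$ small, that $v=0$''; your explicit verification via $\langle v,\nabla(\sfh_\tau f)(x)\rangle\to\langle v,\nabla f(x)\rangle$ is exactly the intended argument.
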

\begin{proof}
Existence, uniqueness, smoothness and smooth dependence on the data
follows directly from Theorem~\ref{thm:reg}. For the second part of
the statement, assume that $\nabla\varphi_{t,x,v}\equiv 0$, hence, 
from the uniqueness property of equation~\eqref{eq:defvarphi} we get
that  $\varphi_{t,x,v}\equiv0$ and $\nabla_x\rho(t,x,\cdot)\cdot
v\equiv0$. Now observe that
$(t,y)\mapsto\eta(t,y):=\nabla_x\rho(t,x,y)\cdot v$ is still a solution of the heat
equation, hence, by Theorem~\ref{thm:injheat} and the fact that
$\eta(t,\cdot)\equiv 0$ we get $\eta(\cdot,\cdot)\equiv 0$, which
easily implies, taking $t$ small, that $v=0$. 
\end{proof}

For $t>0$ we define a new metric tensor $g_t$ in the following way.
\begin{definition}
Let $t>0$, $x\in M$ and $v,w\in T_xM$. Then, $g_t(v,w)$ is defined as
\[
g_t(v,w):=\int_M\nabla\varphi_{t,x,v}(y)\cdot\nabla\varphi_{t,x,w}(y)\rho(t,x,y)\,\d\vol(y).
\]
\end{definition}

\begin{remark}{\rm In the above definition as well as in the rest of
    the paper, by $v\cdot w$ we intend $g(v,w)$, i.e. their scalar
    product w.r.t. the original metric tensor. Similarly, $|v|^2$ will
    always denote $g(v,v)$.
}\fr\end{remark}

\begin{proposition}\label{prop:smooth}
$g_t$ is a $C^\infty$ metric tensor for the manifold $M$ which varies smoothly in $t\in(0,+\infty)$.
\end{proposition}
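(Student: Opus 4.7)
The plan is to verify bilinearity, positive definiteness, and smoothness separately, each time leveraging Proposition~\ref{prop:defphi} and the smoothness of the heat kernel $\rho$.

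First I would establish that $g_t$ is a well-defined symmetric bilinear form on each $T_xM$. Since the right-hand side of~\eqref{eq:defvarphi} is linear in $v$ and the solution $\varphi_{t,x,v}$ (with zero mean) is unique, the map $v \mapsto \varphi_{t,x,v}$ is linear from $T_xM$ into $C^\infty(M)$. Substituting this into the definition immediately yields bilinearity of $g_t(\cdot,\cdot)$, and symmetry is manifest. For positive definiteness, the integrand $|\nabla\varphi_{t,x,v}(y)|^2\rho(t,x,y)$ is non-negative with $\rho(t,x,\cdot) > 0$ everywhere, so $g_t(v,v) = 0$ forces $\nabla\varphi_{t,x,v} \equiv 0$; by the last assertion of Proposition~\ref{prop:defphi}, this gives $v = 0$.

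For the smoothness in $(t,x)$, I would argue locally. Fix an open set $U\subset M$ with a smooth local frame $e_1,\ldots,e_n$ on $TU$, and put
\[
(g_t)_{ij}(x) := g_t(e_i(x), e_j(x)) = \int_M \nabla\varphi_{t,x,e_i(x)}(y)\cdot\nabla\varphi_{t,x,e_j(x)}(y)\,\rho(t,x,y)\,\d\vol(y).
\]
By Proposition~\ref{prop:defphi} (which inherits smooth dependence on parameters from Theorem~\ref{thm:reg}), the map
\[
(t,x,y)\longmapsto \nabla\varphi_{t,x,e_i(x)}(y)\cdot\nabla\varphi_{t,x,e_j(x)}(y)\,\rho(t,x,y)
\]
is jointly smooth on $(0,+\infty)\times U\times M$. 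Since $M$ is compact, all $(t,x)$-derivatives of the integrand are uniformly bounded on compact subsets of $(0,+\infty)\times U$, so the standard differentiation-under-the-integral theorem gives that each $(g_t)_{ij}$ is a $C^\infty$ function of $(t,x)$. Because $U$ and the frame are arbitrary, $g_t$ is a smooth $(0,2)$-tensor depending smoothly on $t$.

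There is no serious obstacle: the whole argument is a direct bookkeeping of the smoothness results already collected in Theorem~\ref{thm:reg} and Proposition~\ref{prop:defphi}, together with the strict positivity of $\rho$ and the ``non-degeneracy'' clause $\nabla\varphi_{t,x,v}\not\equiv 0$ for $v\neq 0$. The mildly delicate point is to notice that linearity of $v\mapsto\varphi_{t,x,v}$ (needed for bilinearity and for representing $g_t$ in a local frame) follows for free from uniqueness in~\eqref{eq:defvarphi}, rather than requiring a separate verification.
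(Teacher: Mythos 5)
Your proof is correct and follows essentially the same route as the paper's: bilinearity from linearity of $v\mapsto\varphi_{t,x,v}$ (via uniqueness in~\eqref{eq:defvarphi}), positive definiteness from $\rho>0$ plus the non-degeneracy clause of Proposition~\ref{prop:defphi}, and smoothness from the smooth dependence of $\varphi_{t,x,v}$ and of the heat kernel on the data. You merely spell out the local-frame/differentiation-under-the-integral step that the paper leaves implicit.
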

\begin{proof}
Uniqueness in equation~\eqref{eq:defvarphi} gives that  $\varphi_{t,x,v}$ linearly depends on $v$, so $g_t$ is a bilinear form, which, by definition, is also symmetric and non--negative. 
Its smoothness is a direct consequence of the smoothness of the heat kernel and of the smooth dependence of $\varphi_{t,x,v}$ on the data.\\
Finally, assume that $g_t(v,v)=0$ and notice that by definition and the fact that $\rho(t,x,y)>0$, for any $t>0$ and $x,y\in M$, we deduce $\nabla\varphi_{t,x,v}\equiv 0$. 
Hence, by the last part of the statement of Proposition~\ref{prop:defphi} we conclude that $v$ must be 0 and we are done.
\end{proof}

We try now to give a more concrete description of the distance $\sfd_t$  induced by the metric tensor $g_t$ on $M$. 
We have
\[
\sfd^2_t(x,y):=\inf_\gamma\int_0^1g_t(\gamma'_s,\gamma'_s)\,\d s,
\]
the infimum being taken among all smooth curves $\gamma:[0,1]\to M$ such that $\gamma_0=x$, $\gamma_1=y$.
\begin{proposition}\label{prop:wass}
Let $s\mapsto\gamma_s\in M$ be an absolutely continuous curve. For fixed $t>0$, we define the curve in the space of probability measures $s\mapsto\mu_s\in\prob M$ by $\mu_s:=\mu_{t,\gamma_s}$, 
that is, at every $s$ we consider the measure whose density (w.r.t. to the fixed measure $\vol$) is the heat kernel centered at $\gamma_s$, at time $t$.\\ 
Then, the curve $s\mapsto\mu_s$ is absolutely continuous w.r.t. $W_2$ and there holds
\[
g_t(\gamma'_s,\gamma'_s)=|\dot\mu_s|^2,\qquad a.e.\ s,
\]
where $|\dot\mu_s|$ denotes the metric speed of the curve $s\mapsto\mu_s$ computed w.r.t. the distance $W_2$.
\end{proposition}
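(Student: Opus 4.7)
The plan is to verify the two claims via the continuity equation characterization of absolutely continuous curves in $(\probt M, W_2)$ provided by Theorem~\ref{thm:charac}, using the defining PDE~\eqref{eq:defvarphi} for $\varphi_{t,x,v}$ as the crucial algebraic input.

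First, I would dispose of absolute continuity quickly. Since $(M,g)$ is compact, Ricci curvature is bounded below by some constant $K$. For any $s<s'$, Theorem~\ref{thm:contraction} gives
\[
W_2(\mu_s,\mu_{s'})=W_2(\sfh_t(\delta_{\gamma_s}),\sfh_t(\delta_{\gamma_{s'}}))\leq e^{-Kt}W_2(\delta_{\gamma_s},\delta_{\gamma_{s'}})=e^{-Kt}\sfd(\gamma_s,\gamma_{s'}),
\]
and the right hand side is bounded by $e^{-Kt}\int_s^{s'}|\gamma'_r|\,\d r$ by the absolute continuity of $\gamma$. Hence $s\mapsto\mu_s$ is absolutely continuous with respect to $W_2$.

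Next, I would identify the velocity field appearing in Theorem~\ref{thm:charac}. Set $v_s:=\nabla\varphi_{t,\gamma_s,\gamma'_s}$, which is defined for a.e.~$s$ (namely at every $s$ where $\gamma'_s$ exists). By Proposition~\ref{prop:defphi}, each $v_s$ is the gradient of a smooth function, so automatically $v_s\in\overline{\{\nabla\varphi:\varphi\in C^\infty(M)\}}^{L^2(\mu_s)}$. To check that the continuity equation $\partial_s\mu_s+\nabla\cdot(v_s\mu_s)=0$ holds in the sense of distributions on $(a,b)\times M$, I would take test functions $\psi\in C^\infty_c((a,b))$ and $f\in C^\infty(M)$, differentiate under the integral (justified by smoothness of $\rho(t,\cdot,\cdot)$ and compactness of $M$), and compute
\[
\frac{\d}{\d s}\int_M f(y)\,\d\mu_s(y)=\int_M f(y)\,\nabla_x\rho(t,\gamma_s,y)\cdot\gamma'_s\,\d\vol(y).
\]
Applying~\eqref{eq:defvarphi} with $x=\gamma_s$, $v=\gamma'_s$ and integrating by parts in $y$ turns the right hand side into
\[
\int_M \nabla f(y)\cdot\nabla\varphi_{t,\gamma_s,\gamma'_s}(y)\,\rho(t,\gamma_s,y)\,\d\vol(y)=\int_M\nabla f\cdot v_s\,\d\mu_s,
\]
which after multiplication by $\psi(s)$ and integration in $s$ (with the boundary terms vanishing) is precisely the distributional continuity equation.

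With both conditions of Theorem~\ref{thm:charac} in hand, $v_s$ is then \emph{the} velocity field of the curve and the metric speed satisfies
\[
|\dot\mu_s|^2=\int_M|v_s|^2\,\d\mu_s=\int_M|\nabla\varphi_{t,\gamma_s,\gamma'_s}(y)|^2\rho(t,\gamma_s,y)\,\d\vol(y)=g_t(\gamma'_s,\gamma'_s),
\]
by the very definition of $g_t$. I do not expect a genuine obstacle here; the only delicate point is the interchange of $\d/\d s$ with the integral when $\gamma$ is merely absolutely continuous, which is handled by dominated convergence once one notes that $\nabla_x\rho(t,\cdot,\cdot)$ is bounded on the compact manifold and $|\gamma'_s|$ is integrable.
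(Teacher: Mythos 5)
Your proof is correct and takes essentially the same approach as the paper: establish $W_2$-absolute continuity via the contraction estimate from Theorem~\ref{thm:contraction}, then identify the velocity field with $\nabla\varphi_{t,\gamma_s,\gamma'_s}$ using the defining PDE~\eqref{eq:defvarphi} together with Theorem~\ref{thm:charac}, and read off $g_t(\gamma'_s,\gamma'_s)$ from the metric speed formula. The only cosmetic difference is that you propose the candidate $v_s$ and verify the continuity equation directly, whereas the paper first invokes existence of $v_s$ from Theorem~\ref{thm:charac} and then identifies it with $\nabla\varphi_{t,\gamma_s,\gamma'_s}$ via the uniqueness in Proposition~\ref{prop:defphi}; the two directions are logically equivalent and rest on the same ingredients.
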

\begin{proof} 
As the curve $s\mapsto\gamma_s$ is absolutely continuous, it is easy to see that also the curve of delta measures $s\mapsto\mu_{0,\gamma_s}$ is absolutely continuous  in $(\prob M,W_2)$. Then, as 
$\mu_s=\mu_{t,\gamma_s}=\sfh_t(\mu_{0,\gamma_s})$, by Theorem~\ref{thm:contraction} and the fact that Ricci tensor of $M$ is uniformly bounded from below, we get that $s\mapsto\mu_s$ is absolutely continuous in $(\prob M,W_2)$.\\
By Theorem~\ref{thm:charac} it follows that for a.e. $s$ there exists $v_s\in\overline{\{\nabla\varphi\ :\ \varphi\in C^\infty(M)\}}^{L^2(\mu_s)}$ such that the continuity equation
\[
\frac{\rm d}{\rm ds}\mu_s+\nabla\cdot(v_s\mu_s)=0,
\]
holds in the sense of distributions and $|\dot\mu_s|^2=\int_M|v_s|^2\,\d\mu_s$ for a.e. $s$.\\
Since we know that $\mu_s=\rho(t,\gamma_s,\cdot)\vol$, the continuity equation reads (distributionally)
$$
0=\frac{\rm d}{\rm ds}\rho(t,\gamma_s,y)+\nabla_y\cdot(v_s\rho(t,\gamma_s,y))
=\nabla_x\rho(t,x,y)\vert_{x=\gamma_s}\cdot\gamma^\prime_s+\nabla_y\cdot(v_s\rho(t,\gamma_s,y))\,,
$$
which implies, by the uniqueness part of Theorem~\ref{thm:charac} and Proposition~\ref{prop:defphi}, that for a.e. $s$ we have $v_s=\nabla\varphi_{t,\gamma_s,\gamma^\prime_s}$. Hence, for a.e. $s$ we conclude
$$
|\dot\mu_s|^2=\int_M|\nabla\varphi_{t,\gamma_s,\gamma^\prime_s}|^2\,\d\mu_s
=\int_M|\nabla\varphi_{t,\gamma_s,\gamma^\prime_s}(y)|^2\,\rho(t,\gamma_s,y)\d\vol(y)=g_t(\gamma^\prime_s,\gamma^\prime_s)\,,
$$
recalling the very definition of the metric tensor $g_t$.
\end{proof}
A straightforward consequence of this proposition is that 
\[
\sfd^2_t(x,y)=\inf_\gamma\int_0^1
|\dot\mu_s|^2\,\d s,
\]
with $\mu_s=\rho(t,\gamma_s,\cdot)\vol$. That is, the distance $\sfd^2_t$ is the infimum of the metric lengths of the curves of probability measures 
$\rho(t,\gamma_s,\cdot)\vol$ in $(\prob M,W_2)$.
\begin{remark}\label{re:ind}{\rm
Fix $t>0$ and notice that since $M$ is compact and $g,g_t$ are two smooth metric tensors, it certainly holds $cg_t\leq g\leq Cg_t$ for some $c,C>0$. Therefore a curve $t\mapsto \gamma_t\in M$ is absolutely continuous w.r.t. the distance induced by $g$ if and only if it is absolutely continuous w.r.t. the distance induced by $g_t$. Hence in Proposition~\ref{prop:wass} it is not important to mention the distance w.r.t. which we are requiring absolute continuity. 
}\fr\end{remark}

We see now the convergence of $g_t$ to the original metric tensor $g$ as $t\to0$. 
\begin{proposition}\label{prop:contzero}
Let $x\in M$ and $v\in T_xM$.  Then, there holds
\begin{equation}
\label{eq:contzero}
\lim_{t\downarrow0}g_t(v,v)=g(v,v).
\end{equation}
\end{proposition}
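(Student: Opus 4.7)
\begin{Proofof}{Proposition~\ref{prop:contzero} (sketch).}
The plan is to prove matching inequalities $\limsup_{t\downarrow 0}g_t(v,v)\le g(v,v)$ and $\liminf_{t\downarrow 0}g_t(v,v)\ge g(v,v)$, using Proposition~\ref{prop:wass} together with Theorem~\ref{thm:contraction} for the first and a duality/Cauchy--Schwarz argument built on the defining PDE~\eqref{eq:defvarphi} for the second.

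\emph{Upper bound.} Pick any smooth curve $s\mapsto\gamma_s$ with $\gamma_0=x$ and $\gamma_0'=v$, for instance the geodesic with these initial data. Set $\mu_s:=\sfh_t(\delta_{\gamma_s})$. By Proposition~\ref{prop:wass} we have $g_t(\gamma_s',\gamma_s')=|\dot\mu_s|^2$ for a.e.\ $s$, while Theorem~\ref{thm:contraction} (applied with $K$ a lower Ricci bound on $(M,g)$) yields $|\dot\mu_s|\le e^{-Kt}|\gamma_s'|$ for a.e.\ $s$. Combining, $g_t(\gamma_s',\gamma_s')\le e^{-2Kt}g(\gamma_s',\gamma_s')$ a.e.; but both sides are continuous in $s$ thanks to Proposition~\ref{prop:smooth} and the smoothness of $\gamma$, so the inequality holds at $s=0$, giving $g_t(v,v)\le e^{-2Kt}g(v,v)$. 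Letting $t\downarrow 0$ gives the $\limsup$ bound.

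\emph{Lower bound.} Fix any $f\in C^\infty(M)$ and test~\eqref{eq:defvarphi} against $f$: integration by parts on $M$ produces
\[
v\cdot\nabla_x(\sfh_t f)(x)=\int_M f(y)\,\nabla_x\rho(t,x,y)\cdot v\,\d\vol(y)=\int_M\nabla f(y)\cdot\nabla\varphi_{t,x,v}(y)\,\rho(t,x,y)\,\d\vol(y).
\]
Applying Cauchy--Schwarz on the right, using the very definition of $g_t(v,v)$, gives
\[
\bigl|v\cdot\nabla_x(\sfh_t f)(x)\bigr|^2\le g_t(v,v)\cdot\int_M|\nabla f|^2\,\d\sfh_t(\delta_x).
\]
As $t\downarrow 0$ the semigroup $\sfh_t f\to f$ in $C^\infty(M)$ (parabolic regularity on a compact manifold), so the left-hand side tends to $(v\cdot\nabla f(x))^2$; meanwhile $\sfh_t(\delta_x)\to\delta_x$ weakly and $|\nabla f|^2$ is continuous, so $\int|\nabla f|^2\,\d\sfh_t(\delta_x)\to|\nabla f(x)|^2$. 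Hence
\[
\liminf_{t\downarrow 0}g_t(v,v)\ge\frac{(v\cdot\nabla f(x))^2}{|\nabla f(x)|^2}.
\]
Choosing $f$ with $\nabla f(x)=v$ (e.g.\ $f(y)=g_x(v,\exp_x^{-1}(y))$ cut off outside a normal neighbourhood of $x$) forces this lower bound to equal $g(v,v)$, completing the proof.

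The only mildly delicate point is the last convergence $\nabla_x(\sfh_t f)(x)\to\nabla f(x)$: it uses that on the smooth compact manifold $(M,g)$ the heat semigroup converges to the identity in the smooth topology, which is standard. Everything else is an immediate packaging of Proposition~\ref{prop:wass}, Theorem~\ref{thm:contraction}, and the dual characterization implicit in~\eqref{eq:defvarphi}.
\end{Proofof}
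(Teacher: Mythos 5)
Your proof is correct and follows essentially the same route as the paper's. The only cosmetic differences are that you pass from the a.e. inequality $g_t(\gamma_s',\gamma_s')\le e^{-2Kt}g(\gamma_s',\gamma_s')$ to the value at $s=0$ by continuity rather than by the paper's integral-averaging argument, and for the lower bound you apply Cauchy--Schwarz directly where the paper uses the equivalent variational identity $\|a\|_{L^2}^2=\sup_{\psi}\bigl(2\langle a,\nabla\psi\rangle-\|\nabla\psi\|_{L^2}^2\bigr)$ with the optimal test function fixed; both rely on the same computation~\eqref{eq:treno}.
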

\begin{proof}
Let $s\mapsto\gamma_s$ be a $C^1$ curve such that $\gamma_0=x$ and $\gamma'_0=v$. 
By Proposition~\ref{prop:wass} and Theorem~\ref{thm:contraction} we get that 
\[
\int_0^Sg_t(\gamma_s',\gamma_s')\,\d s=\int_0^S|\dot\mu_{t,\gamma_s}|^2\,\d s\leq e^{-2Kt}\int_0^Sg(\gamma_s',\gamma_s')\,\d s,\qquad\forall t,S> 0.
\]
Dividing by $S$ and letting $S\downarrow 0$ we deduce
\[
g_t(v,v)\leq e^{-2Kt}g(v,v).
\]
Thus, to conclude it is sufficient to show that for any $x\in M$ and $v\in T_xM$ there holds
\[
\limi_{t\downarrow0}g_t(v,v)\geq g(v,v).
\]
It is easy to see that we have
\[
g_t(v,v)=\int_M|\nabla\varphi_{t,x,v}|^2\,\d\mu_{t,x}=\sup_{\psi\in C^\infty_c(M)} 2\int_M\nabla\psi\cdot\nabla\varphi_{t,x,v}\,\d\mu_{t,x}-\int_M|\nabla\psi|^2\,\d\mu_{t,x},
\]
and that for any $\psi\in C^\infty_c(M)$ there holds 
\begin{equation}
\label{eq:treno}
\begin{split}
\int_M\nabla\psi\cdot\nabla\varphi_{t,x,v}\,\d\mu_{t,x}&=\int_M \nabla\psi(y)\cdot\nabla\varphi_{t,x,v}(y)\rho(t,x,y)\,\d\vol(y)\\
&=-\int_M \psi(y)\nabla_y\cdot\big(\nabla\varphi_{t,x,v}(y)\rho(t,x,y)\big)\,\d\vol(y)\\
&=\int_M\psi(y)\nabla_x\rho(t,x,y)\cdot v\,\d\vol(y)\\
&=\nabla_x\left(\int_M\psi(y)\rho(t,x,y)\,\d\vol(y)\right)\cdot v
\end{split}
\end{equation}
Thus, we can choose any $\psi\in C^\infty_c(M)$ so that $\nabla\psi(x)=v$ and conclude that
\[
\begin{split}
\limi_{t\downarrow0}g_t(v,v)&\geq \lim_{t\to 0}\left(2\int_M\nabla\psi\cdot\nabla\varphi_{t,x,v}\,\d\mu_{t,x}-\int_M|\nabla\psi|^2\,\d\mu_{t,x}\right)\\
&=\lim_{t\to 0}\,2\nabla_x\left(\int_M\psi(y)\rho(t,x,y)\,\d\vol(y)\right)\cdot v-\lim_{t\to 0}\int_M|\nabla\psi|^2\,\d\mu_{t,x}\\
&=|v|^2\,,
\end{split}
\]
by the standard properties of the heat kernel $\rho(t,x,y)$.
\end{proof}

For the discussion thereafter we introduce the transport plans $\ggamma_{t,x,v}\in\prob{TM}$ defined as follows.
\begin{definition}[The transport plans $\ggamma_{t,x,v}$]\label{plans}
Let $t\geq 0, x\in M$ and $v\in T_xM$. Then, $\ggamma_{0,x,v}:=\delta_{(x,v)}$ and $\ggamma_{t,x,v}:=(X_{t,x,v})_\sharp\mu_{t,x}$, where $X_{t,x,v}(y):=(y,\nabla\varphi_{t,x,v}(y))$.
\end{definition}
The natural projection mapping from $TM$ to $M$ will be denoted by $\pi^M$.
\begin{corollary}\label{cor:utile}
Let $x\in M$ and $v\in T_xM$. Then, there holds $W_2(\ggamma_{t,x,v},\ggamma_{0,x,v})\to 0$ as $t\to 0$, where the quadratic Kantorovich distance considered is the one built on $(TM,{\overline{\sfd}})$, ${\overline{\sfd}}$ being the Sasaki metric on $TM$ constructed from the metric tensor $g$ on $M$.
\end{corollary}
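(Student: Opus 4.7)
The plan is to exploit the fact that the target measure $\ggamma_{0,x,v} = \delta_{(x,v)}$ is a Dirac, which gives the exact identity
\[
W_2^2(\ggamma_{t,x,v},\ggamma_{0,x,v}) = \int_M \ov{\sfd}^2\bigl((y,\nabla\varphi_{t,x,v}(y)),(x,v)\bigr)\,\d\mu_{t,x}(y),
\]
and then to bound the integrand by inserting an \emph{intermediate point of the form} $(y,\nabla\psi(y))$ for a carefully chosen smooth test function $\psi\in C^\infty(M)$ with $\nabla\psi(x)=v$. By the triangle inequality in $(TM,\ov\sfd)$, together with Young's inequality and the observation (immediate from the definition of $\ov g$) that the Sasaki distance between two points of the same fiber is bounded by the $g$-norm of the difference of the vectors, we get
\[
\ov{\sfd}^2\bigl((y,\nabla\varphi_{t,x,v}(y)),(x,v)\bigr)\leq 2|\nabla\varphi_{t,x,v}(y)-\nabla\psi(y)|^2+2\ov\sfd^2\bigl((y,\nabla\psi(y)),(x,v)\bigr).
\]

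The second term is easy: the map $y\mapsto\ov\sfd^2((y,\nabla\psi(y)),(x,v))$ is continuous on the compact manifold $M$ and vanishes at $y=x$ (since $\nabla\psi(x)=v$), hence its integral against $\mu_{t,x}$ tends to $0$ because $\mu_{t,x}\to\delta_x$ weakly as $t\downarrow 0$, a standard property of the heat kernel on a compact manifold.

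The first term is where the main work lies, and I would handle it by expanding the square and invoking Proposition~\ref{prop:contzero} together with the dual identity already observed in the proof of that proposition. Precisely,
\[
\int_M|\nabla\varphi_{t,x,v}-\nabla\psi|^2\,\d\mu_{t,x}=g_t(v,v)-2\int_M \nabla\varphi_{t,x,v}\cdot\nabla\psi\,\d\mu_{t,x}+\int_M|\nabla\psi|^2\,\d\mu_{t,x}.
\]
By Proposition~\ref{prop:contzero} the first summand converges to $|v|^2$, and by the weak convergence $\mu_{t,x}\to\delta_x$ the third summand converges to $|\nabla\psi(x)|^2=|v|^2$. For the middle term, computation~\eqref{eq:treno} gives
\[
\int_M\nabla\psi\cdot\nabla\varphi_{t,x,v}\,\d\mu_{t,x}=\nabla_x\!\Bigl(\int_M\psi(y)\rho(t,x,y)\,\d\vol(y)\Bigr)\cdot v=\nabla(\sfh_t\psi)(x)\cdot v,
\]
which tends to $\nabla\psi(x)\cdot v=|v|^2$ by the smoothing properties of the heat semigroup on $M$. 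Summing, the first term converges to $|v|^2-2|v|^2+|v|^2=0$.

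The main obstacle is really just identifying a useful triangle-inequality decomposition: the naive bound coming from~\eqref{eq:trivial} applied through the ``zero section'' does not give vanishing, so the choice of the intermediate point $(y,\nabla\psi(y))$ lying on a smooth section of $TM$ with the correct value at $x$ is essential. Everything else reduces to combining Proposition~\ref{prop:contzero}, the heat-kernel identity~\eqref{eq:treno}, and standard short-time behavior of $\sfh_t$ on the compact manifold $M$.
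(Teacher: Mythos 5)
Your proof is correct, and it takes a genuinely different route from the paper's. The paper appeals to the characterization of $W_2$--convergence (Theorem~\ref{thm:basew2}) as tightness plus convergence of second moments plus weak convergence: it establishes a second-moment bound via~\eqref{eq:trivial}, extracts a weak subsequential limit $\widetilde\ggamma=\delta_x\times\ssigma$, identifies the mean of $\ssigma$ as $v$ using~\eqref{eq:treno}, and then forces $\ssigma=\delta_v$ by exploiting the equality case in Jensen's inequality for $\int g(w,w)\,\d\ssigma$. You instead exploit directly that $\ggamma_{0,x,v}$ is a Dirac, which collapses $W_2^2$ to a single integral and removes any need for a subsequential compactness argument or an equality-case analysis. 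Your key device -- splitting via an intermediate point on the smooth section $y\mapsto(y,\nabla\psi(y))$, together with the elementary fiberwise Sasaki bound $\ov\sfd((y,w_1),(y,w_2))\leq |w_1-w_2|_g$ obtained from the straight-line curve in the fiber -- reduces the whole statement to the expansion of $\int|\nabla\varphi_{t,x,v}-\nabla\psi|^2\,\d\mu_{t,x}$, which vanishes by Proposition~\ref{prop:contzero}, identity~\eqref{eq:treno}, and the $C^1$ short-time convergence of $\sfh_t\psi\to\psi$ on the compact manifold. You correctly note that the naive bound through the zero section via~\eqref{eq:trivial} would only yield an upper bound of order $|v|^2$ rather than $0$, so the intermediate section with $\nabla\psi(x)=v$ is essential. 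Both proofs ultimately rely on the same two ingredients (Proposition~\ref{prop:contzero} and identity~\eqref{eq:treno}), but your argument is shorter and more self-contained, at the cost of being specific to a Dirac target, whereas the paper's compactness/semicontinuity scheme would also apply to more general limit plans.
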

\begin{proof} By Theorem~\ref{thm:basew2} we know  that the $W_2$--convergence is characterized by convergence of second moments plus weak convergence. 

We compute the second moments w.r.t. the point $(x,0)\in T_xM$ and we start proving that
\begin{equation}
\label{eq:momsec}
\lims_{t\downarrow0}\int_{TM} {\overline{\sfd}}^2\big((y,w),(x,0)\big)\,\d\ggamma_{t,x,v}(y,w)\leq \int_{TM} {\overline{\sfd}}^2\big((y,w),(x,0)\big)\,\d\ggamma_{0,x,v}(y,w).
\end{equation}
Integrating the bound~\eqref{eq:trivial} w.r.t. $\ggamma_{t,x,v}$, we get
\[
\begin{split}
\int_{TM} &{\overline{\sfd}}^2\big((y,w),(x,0)\big)\,\d\ggamma_{t,x,v}(y,w)\\
&\leq \int_M  \sfd^2(y,x)\,\d\pi^M_\sharp\ggamma_{t,x,v}(y)+\int_{TM} g(w,w)\,\d\ggamma_{t,x,v}(y,w)\\
&=\int_M \sfd^2(y,x)\,\d\mu_{t,x}(y)+\int_{TM} g(\nabla\varphi_{t,x,v}(y),\nabla\varphi_{t,x,v}(y))\,\d\mu_{t,\overline x}(y)\\
&=\int_M \sfd^2(x,\overline x)\,\d\mu_{t,\overline x}+g_t(v,v).
\end{split}
\]
Thus, noticing that $\lim_{t\downarrow 0}\int_M \sfd^2(x,\overline x)\,\d\mu_{t,\overline x}=0$, using the limit~\eqref{eq:contzero} and the trivial inequality ${\overline{\sfd}}^2\big((y,w),(x,0)\big)\geq g(w,w)$  we get formula~\eqref{eq:momsec}.

Taking into account the lower semicontinuity of the second moments, the conclusion will follow if we show that $\ggamma_t$ weakly converge to $ \ggamma_0$ as $t\downarrow 0$. The bound on the second moments gives in particular that the family $\{\ggamma_t\}_{t\in(0,1)}$ is tight. Let $t_n\downarrow0$ be any sequence such that $n\mapsto \ggamma_{t_n,x,v}$ weakly converges to some $\widetilde\ggamma\in \prob{TM}$. Clearly, there holds $\pi^M_\sharp\widetilde\ggamma=\delta_{x}$, hence, we can write $\widetilde\ggamma=\delta_{x}\times \ssigma$ for some measure $\ssigma\in\prob{T_{x}M}$. To conclude, it is then sufficient to show that $\ssigma=\delta_v$. 

Let $\psi\in C^\infty_c(M)$ and consider the function $\overline \psi:TM\to\R$ given by $\overline \psi(y,w):=w\cdot\nabla\psi(y)$. As the function $\overline\psi$ is continuous with linear growth, taking into account the uniform bound on the second moments of the $\ggamma_t$, it is easy to see that we get $\lim_{n\to\infty}\int_M \overline\psi\,\d\ggamma_{t_n,x,v}=\int_M \overline\psi\,\d\widetilde\ggamma$, i.e.
\[
\begin{split}
\lim_{n\to\infty}\int_M \nabla\varphi_{t_n,x,v}(y)\cdot \nabla\psi(y)\,\d\mu_{t_n, x}(y)&=\lim_{n\to\infty}\int_M \overline\psi(y,w)\,\d\ggamma_{t_n,x,v}(y,w)\\
&=\int_M \overline\psi(y,w)\,\d\widetilde\ggamma(y,w)\\
&=\int_M w\cdot \nabla\psi(x)\,\d\ssigma(w)\,.
\end{split}
\]
On the other hand, from equation~\eqref{eq:treno}, letting $t\downarrow0$, we deduce 
\[
\lim_{n\to\infty}\int_M \nabla\varphi_{t_n,x,v}(y)\cdot \nabla\psi(y)\,\d\mu_{t_n, x}(y)=\nabla\psi(x)\cdot v.
\]
Being these last two identities valid for any $\psi\in C^\infty_c(M)$, we conclude that 
$$
\int_M w\,\d\ssigma(w)=v\,.
$$
Finally, from the lower semicontinuity of $\ggamma\mapsto\int_M g(w,w)\,\d\ggamma(y,w)$ w.r.t. weak convergence of measures and the limit~\eqref{eq:contzero} we have 
\[
\begin{split}
g(v,v)&=\limi_{n\to\infty}\int_M g(w,w)\,\d\ggamma_{t_n,x,v}(y,w)\\
&\geq\int_M g(w,w)\,\d\widetilde\ggamma(y,w)\\
&=\int_M g(w,w)\,\d\ssigma(w)\\
&\geq g\left(\int_M w\,\d\ssigma(w),\int_M w\,\d\ssigma(w)\right)\\
&=g(v,v)\,,
\end{split}
\]
which forces the inequality 
$$
\int_M g(w,w)\,\d\ssigma(w)\geq g\left(\int_M w\,\d\ssigma(w),\int_M w\,\d\ssigma(w)\right)
$$
to be an equality. This can be true only if $\ssigma=\delta_v$.
\end{proof}

\section{The Main Result}
We start bounding from above the derivative $\frac{\d}{\d t}g_t$. Notice that the computations done in the next lemma are precisely those made by 
Otto--Westdickenberg in~\cite{Otto-Westdickenberg05}, which we report for completeness.
\begin{proposition}\label{prop:dasopra}
Let $x\in M$ and $v\in T_xM$. Then, there holds
\[
\frac{\d}{\d t}\tfrac12g_t(v,v)\leq  -\int_{TM} \ric(w,w)\,\d\ggamma_{t,x,v}(y,w),\qquad\forall t>0.
\]
\end{proposition}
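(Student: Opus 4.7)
My plan is to compute $\tfrac{\d}{\d t}g_t(v,v)$ by a direct calculation and then invoke Bochner's identity to produce the Ricci term. I would fix $(x,v)$ and abbreviate $\rho_t(y):=\rho(t,x,y)$, $\varphi_t:=\varphi_{t,x,v}$, and $\eta_t(y):=\nabla_x\rho(t,x,y)\cdot v$, so that the defining PDE of Proposition~\ref{prop:defphi} reads $-\nabla\cdot(\rho_t\nabla\varphi_t)=\eta_t$. Since $\nabla_x$ commutes with $\Delta_y$, differentiating the heat equation for $\rho_t$ in the direction $v$ shows that $\eta_t$ inherits the heat equation, $\partial_t\eta_t=\Delta\eta_t$. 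All derivatives and integrations by parts below are legitimate, because $M$ is compact without boundary and everything is smooth by Proposition~\ref{prop:defphi}.

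The starting observation is that a single integration by parts yields a dual representation of the metric tensor,
\[
g_t(v,v) \;=\; \int_M |\nabla\varphi_t|^2\,\rho_t\,\d\vol \;=\; -\int_M \varphi_t\,\nabla\cdot(\rho_t\nabla\varphi_t)\,\d\vol \;=\; \int_M \varphi_t\,\eta_t\,\d\vol,
\]
and both representations will be used. First I would differentiate the primal form, using $\partial_t\rho_t=\Delta\rho_t$ and one integration by parts, to convert $2\!\int\!\nabla\varphi_t\cdot\nabla\dot\varphi_t\,\rho_t$ into $2\!\int\!\dot\varphi_t\,\eta_t$. The uncontrolled quantity $\int\!\dot\varphi_t\eta_t$ is then eliminated by differentiating the dual form: $\tfrac{\d}{\d t}\!\int\!\varphi_t\eta_t=\int\!\dot\varphi_t\eta_t+\int\!\varphi_t\Delta\eta_t$ gives a second expression for the same derivative, and combining the two produces, after two further integrations by parts,
\[
\tfrac{\d}{\d t}g_t(v,v) \;=\; 2\!\int_M\Delta\varphi_t\,\eta_t\,\d\vol \,-\, \int_M\Delta|\nabla\varphi_t|^2\,\rho_t\,\d\vol.
\]

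At this stage I would invoke Bochner's formula
\[
\tfrac12\Delta|\nabla\varphi_t|^2 \;=\; |\nabla^2\varphi_t|^2 + \nabla\varphi_t\cdot\nabla\Delta\varphi_t + \ric(\nabla\varphi_t,\nabla\varphi_t)
\]
on the second integral, and rewrite the first as $2\!\int\!\nabla\Delta\varphi_t\cdot\nabla\varphi_t\,\rho_t$ using once more the PDE $\eta_t=-\nabla\cdot(\rho_t\nabla\varphi_t)$. The two $\nabla\Delta\varphi_t\cdot\nabla\varphi_t$ contributions then cancel exactly and leave the closed expression
\[
\tfrac{\d}{\d t}g_t(v,v) \;=\; -2\!\int_M|\nabla^2\varphi_t|^2\,\d\mu_{t,x} \,-\, 2\!\int_M\ric(\nabla\varphi_t,\nabla\varphi_t)\,\d\mu_{t,x}.
\]
Dividing by $2$ and dropping the non-negative Hessian term yields the claim, since by Definition~\ref{plans} the surviving integral coincides with $\int_{TM}\ric(w,w)\,\d\ggamma_{t,x,v}(y,w)$.

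The conceptual obstacle is the absence of a manageable formula for $\dot\varphi_t:=\partial_t\varphi_t$; the dual representation $g_t(v,v)=\int\varphi_t\eta_t\,\d\vol$, combined with the heat equation for $\eta_t$, is precisely the device that allows its elimination. Note also that the argument in fact delivers an equality with an additional non-negative Hessian term, so it is essentially sharp; the inequality form stated in the proposition is the consequence that will be invoked later when the Hessian term is not needed.
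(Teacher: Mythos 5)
Your proposal is correct. It reaches exactly the intermediate identity
\[
\frac{\d}{\d t}\tfrac12 g_t(v,v)=\int_M\Big(\nabla\Delta\varphi_t\cdot\nabla\varphi_t-\tfrac12\Delta|\nabla\varphi_t|^2\Big)\rho_t\,\d\vol
\]
that the paper also arrives at, and from there both arguments finish with the same Bochner identity and the same discarding of the non-negative $\int|\nabla^2\varphi_t|^2\rho_t\,\d\vol$. Where you differ is only in the mechanism for eliminating the uncontrolled term $\partial_t\varphi_t$. The paper differentiates the defining PDE $-\nabla\cdot(\rho_t\nabla\varphi_t)=\eta_t$ in time, observes that the left-hand side becomes $-\nabla\cdot(\rho_t\nabla\partial_t\varphi_t)-\nabla\cdot(\Delta\rho_t\,\nabla\varphi_t)$ and the right-hand side $\Delta\eta_t=-\Delta\big(\nabla\cdot(\rho_t\nabla\varphi_t)\big)$, and substitutes this directly into the derivative of the primal expression $\int|\nabla\varphi_t|^2\rho_t\,\d\vol$. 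You instead play the primal form against the ``dual'' one $g_t(v,v)=\int\varphi_t\eta_t\,\d\vol$, differentiate both, and take the linear combination that cancels $\int\dot\varphi_t\,\eta_t\,\d\vol$. These are two bookkeepings of the same Otto--Westdickenberg computation; your version has the slight advantage that it never mentions $\partial_t\varphi_t$ explicitly after the initial step (so nothing needs to be known about the differentiated PDE beyond $\partial_t\eta_t=\Delta\eta_t$), while the paper's version makes the substitution more mechanical and closer to what one writes in local coordinates. Both are equally rigorous here since $M$ is compact and all objects are smooth, and both in fact give the sharper equality with the Hessian term, which you rightly note.
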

\begin{proof} We  know by Proposition~\ref{prop:smooth} that $(0,+\infty)\ni t\mapsto g_t(v,v)$ is smooth. 
Differentiating in time equation~\eqref{eq:defvarphi} we obtain
\[
\begin{split}
-\nabla_y&\cdot(\nabla\partial_t\varphi_{t,x,v}(y)\rho(t,x,y))\\
&=\nabla_x(\Delta_y\rho(t,x,y))\cdot v+\nabla_y\cdot(\nabla\varphi_{t,x,v}(y)\Delta_y\rho(t,x,y))\\
&=\Delta_y\big(\nabla_x\rho(t,x,y)\cdot v\big)+\nabla_y\cdot(\nabla\varphi_{t,x,v}(y)\Delta_y\rho(t,x,y))\\
&=-\Delta_y\big(\nabla_y\cdot(\nabla\varphi_{t,x,v}\rho(t,x,y))\big)+\nabla_y\cdot(\nabla\varphi_{t,x,v}(y)\Delta_y\rho(t,x,y)).
\end{split}
\]
Therefore, by explicit computation and writing $\varphi$ in place of $\varphi_{t,x,v}$ and $\rho$ in place of $\rho(t,x,\cdot)$, we get: 
\[
\begin{split}
\frac{\d}{\d t}\frac12g_t(v,v)&=\frac{\d}{\d t}\frac12\int_M|\nabla\varphi|^2\rho\,\d\vol\\
&=\int_M\nabla\varphi\cdot\nabla\partial_t\varphi\rho+\frac{|\nabla\varphi|^2}{2}\Delta\rho\,\d\vol\\
&=\int_M -\varphi\nabla\cdot(\nabla\partial_t\varphi\rho)+\frac{|\nabla\varphi|^2}{2}\Delta\rho\,\d\vol\\
&=\int_M -\varphi\Delta(\nabla\cdot(\nabla\varphi\rho))+\varphi\nabla\cdot(\nabla\varphi\Delta\rho)+\frac{|\nabla\varphi|^2}{2}\Delta\rho\,\d\vol\\
&=\int_M\left(\nabla\Delta\varphi\cdot\nabla\varphi-\Delta\frac{|\nabla\varphi|^2}{2}\right)\rho\,\d\vol\\
&=\int_M\Big(-|\nabla^2\varphi|^2-\ric(\nabla\varphi,\nabla\varphi)\Big)\rho\,\d\vol\\
&\leq-\int_M\ric(\nabla\varphi,\nabla\varphi)\rho\,\d\vol\\
&=-\int_{TM} \ric(w,w)\,\d\ggamma_{t,x,v}(y,w),
\end{split}
\]
which is the thesis. In the last passage we expressed the result using the transport plans of Definition~\ref{plans}.
\end{proof}
\begin{corollary}\label{cor:dasopra}
For any $x\in M$ and $v\in T_xM$ there holds
\[
\lims_{t\downarrow0}\frac{g_t(v,v)-g(v,v)}{t}\leq -2\,\ric(v,v).
\]
\end{corollary}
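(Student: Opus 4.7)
The plan is to integrate the derivative bound of Proposition~\ref{prop:dasopra} in time and then extract the infinitesimal behavior at $t=0$ using the weak continuity of the plans $\ggamma_{t,x,v}$ established in Corollary~\ref{cor:utile}.

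First, I would fix $x\in M$ and $v\in T_xM$ and apply Proposition~\ref{prop:dasopra} on a subinterval $[\eps,t]\subset(0,+\infty)$, integrating to obtain
\[
\tfrac12\bigl(g_t(v,v)-g_\eps(v,v)\bigr)\leq -\int_\eps^t\int_{TM}\ric(w,w)\,\d\ggamma_{s,x,v}(y,w)\,\d s.
\]
The goal is to let $\eps\downarrow 0$. On the left--hand side, Proposition~\ref{prop:contzero} gives $g_\eps(v,v)\to g(v,v)$. On the right--hand side, since $M$ is compact there is a constant $C$ with $|\ric(w,w)|\le C\,g(w,w)$ on $TM$, hence
\[
\left|\int_{TM}\ric(w,w)\,\d\ggamma_{s,x,v}\right|\leq C\int_M|\nabla\varphi_{s,x,v}|^2\,\d\mu_{s,x}=C\,g_s(v,v),
\]
and Theorem~\ref{thm:contraction} combined with Proposition~\ref{prop:wass} bounds $g_s(v,v)$ uniformly for $s$ in a neighborhood of $0$ (in fact by $e^{-2Ks}g(v,v)$). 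Dominated convergence then yields
\[
\tfrac12\bigl(g_t(v,v)-g(v,v)\bigr)\leq -\int_0^t\int_{TM}\ric(w,w)\,\d\ggamma_{s,x,v}(y,w)\,\d s.
\]

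Next, I would divide by $t>0$ and pass to the $\limsup$ as $t\downarrow 0$. The key point is that the map
\[
F(s):=\int_{TM}\ric(w,w)\,\d\ggamma_{s,x,v}(y,w)
\]
is continuous at $s=0$ with $F(0)=\ric(v,v)$. To see this, the function $(y,w)\mapsto\ric_{g_y}(w,w)$ is smooth on $TM$ with quadratic growth in $w$ with respect to the Sasaki distance $\overline{\sfd}$ (via \eqref{eq:trivial}), and Corollary~\ref{cor:utile} asserts $W_2$--convergence $\ggamma_{s,x,v}\to\ggamma_{0,x,v}=\delta_{(x,v)}$ on $(TM,\overline{\sfd})$. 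The characterization of $W_2$--convergence in Theorem~\ref{thm:basew2}(iii) then yields $F(s)\to F(0)=\ric(v,v)$ as $s\downarrow 0$.

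Finally, since the average of a right--continuous function on $[0,t]$ converges to its value at $0$, I obtain
\[
\lim_{t\downarrow 0}\frac{1}{t}\int_0^t F(s)\,\d s=\ric(v,v),
\]
which combined with the integrated inequality yields
\[
\lims_{t\downarrow 0}\frac{g_t(v,v)-g(v,v)}{t}\leq -2\,\ric(v,v).
\]
All the hard work is done in the preceding results; the only delicate point in this plan is the legitimacy of passing $\eps\downarrow 0$ inside the time integral, which is why I singled out the uniform bound on $g_s(v,v)$ on a neighborhood of $s=0$ provided by the $K$--contraction of $W_2$ along the heat flow.
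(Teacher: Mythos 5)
Your proof is correct and follows essentially the same route as the paper: both integrate the derivative bound from Proposition~\ref{prop:dasopra} in time and then use the $W_2$--convergence of the plans $\ggamma_{t,x,v}\to\delta_{(x,v)}$ from Corollary~\ref{cor:utile} together with the quadratic growth of $\ric$ and Theorem~\ref{thm:basew2} to pass to the limit. The only (welcome) difference is that you make the fundamental-theorem-of-calculus step explicit by first integrating on $[\eps,t]$ and invoking the uniform bound $g_s(v,v)\le e^{-2Ks}g(v,v)$ before letting $\eps\downarrow 0$, whereas the paper writes the identity $\frac{g_t(v,v)-g(v,v)}{t}=\frac1t\int_0^t\frac{\d}{\d\xi}g_\xi(v,v)\,\d\xi$ directly from smoothness on $(0,+\infty)$ and continuity at $0$.
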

\begin{proof}
From the smoothness of $(0,+\infty)\ni t\mapsto g_t(v,v)$ and its continuity at time 0 we have
\[
\frac{g_t(v,v)-g(v,v)}{t}=\frac1t\int_0^t \frac{\d}{\d \xi}g_\xi(v,v)\,\d \xi,
\]
therefore, taking Proposition~\ref{prop:dasopra} into account, to conclude it is sufficient to show that
\[
\lim_{t\to 0} \int_{TM}\ric(w,w)\,\d\ggamma_{t,x,v}(y,w)=\ric(v,v).
\]
This is a direct consequence of the $W_2$--convergence of the transport plans $\ggamma_{t,x,v}$ to the delta measures $\delta_{x,v}$, given by Corollary~\ref{cor:utile}, 
the fact that the map $TM\ni (x,v)\mapsto\ric(v,v)$ is continuous with quadratic growth and Theorem~\ref{thm:basew2}.
\end{proof}
\begin{remark}\label{re:cosamanca}{\rm
As the computations just done show, in order to conclude that $\frac{\d}{\d t}\frac12g_t(v,v)\res{t=0}=-\Ric(v,v)$, 
it would be enough to prove that -- in the notation of the proof of Proposition~\ref{prop:dasopra} -- there holds $\lim_{t\downarrow0}\int_M|\nabla ^2\varphi|^2\rho\,\d\vol=0$.\\
As we are unable to get this convergence directly, we proceed differently.
}\fr\end{remark}
\begin{lemma}\label{le:trasporto}
Let $x\in M$, $v\in T_xM$ and define $s\mapsto\gamma_s:=\exp_x(sv)$. Then, for every function $\varphi\in C^\infty(M)$ such that $\nabla\varphi( x)=- v$ and $\eps\in(0,\ov\eps)$ there holds
\[
\limi_{t\to 0}\frac\eps{2t}\left(\int_0^\eps g_t(\gamma'_s,\gamma'_s)-g_0(\gamma'_s,\gamma'_s)\,\d s\right)\geq \Delta(\eps\varphi)(x)+\Delta((\eps\varphi)^c)(\gamma_\eps),
\]
where $\ov\eps=\ov\eps(\varphi)$ is given by Lemma~\ref{le:cconc}.
\end{lemma}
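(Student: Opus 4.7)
\smallskip
\noindent\textbf{Proof plan.} The strategy is to get a lower bound on $\int_0^\eps g_t(\gamma'_s,\gamma'_s)\,\d s$ via the dual formulation of $W_2^2$, choosing the test $c$--concave function so the bound becomes \emph{tight} at $t=0$, and then to differentiate at $t=0$. Complementing Corollary~\ref{cor:dasopra}, which gave a pointwise upper bound from the Bochner identity, here the direction of the inequality is reversed, so we must pass through the global $W_2$ duality.

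\smallskip
\noindent\textbf{Step 1: from $g_t$--length to $W_2$--distance.}
By Proposition~\ref{prop:wass} the curve $s\mapsto\mu_s:=\mu_{t,\gamma_s}$ is absolutely continuous in $(\prob M,W_2)$ with $|\dot\mu_s|^2=g_t(\gamma'_s,\gamma'_s)$ a.e. Cauchy--Schwarz combined with~\eqref{eq:idlength} gives
\[
\int_0^\eps g_t(\gamma'_s,\gamma'_s)\,\d s \;\geq\; \frac{1}{\eps}\left(\int_0^\eps |\dot\mu_s|\,\d s\right)^2 \;\geq\; \frac{1}{\eps}\,W_2^2(\mu_{t,\gamma_0},\mu_{t,\gamma_\eps}).
\]
On the other hand, since $s\mapsto\gamma_s$ is a constant--speed geodesic of $(M,g)$,
\[
\int_0^\eps g_0(\gamma'_s,\gamma'_s)\,\d s \;=\; \eps|v|^2 \;=\; \frac{1}{\eps}\sfd^2(x,\gamma_\eps) \;=\; \frac{1}{\eps}W_2^2(\mu_{0,\gamma_0},\mu_{0,\gamma_\eps}).
\]
Thus
\[
\frac{\eps}{2t}\!\left(\int_0^\eps g_t(\gamma'_s,\gamma'_s)-g_0(\gamma'_s,\gamma'_s)\,\d s\right)\;\geq\;\frac{1}{2t}\Bigl[W_2^2(\mu_{t,\gamma_0},\mu_{t,\gamma_\eps})-W_2^2(\mu_{0,\gamma_0},\mu_{0,\gamma_\eps})\Bigr].
\]

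\smallskip
\noindent\textbf{Step 2: tight duality with $\eps\varphi$.}
By Lemma~\ref{le:cconc}, for $\eps\in(0,\ov\eps)$ the function $\eps\varphi$ is $c$--concave with smooth $(\eps\varphi)^c$, and the associated transport map $T(y)=\exp_y(-\nabla(\eps\varphi)(y))$ satisfies $T(x)=\exp_x(\eps v)=\gamma_\eps$ because $\nabla\varphi(x)=-v$. The duality inequality for $W_2$ gives, for every $t\geq 0$,
\[
\tfrac12 W_2^2(\mu_{t,\gamma_0},\mu_{t,\gamma_\eps})\;\geq\;\int_M \eps\varphi\,\d\mu_{t,\gamma_0}+\int_M (\eps\varphi)^c\,\d\mu_{t,\gamma_\eps},
\]
while the duality identity~\eqref{eq:ctra} applied at $t=0$ yields
\[
\tfrac12 W_2^2(\mu_{0,\gamma_0},\mu_{0,\gamma_\eps}) \;=\; \tfrac12 \sfd^2(x,\gamma_\eps) \;=\; \eps\varphi(x)+(\eps\varphi)^c(\gamma_\eps).
\]
Subtracting and plugging into Step 1,
\[
\frac{\eps}{2t}\!\int_0^\eps\!\!\bigl(g_t-g_0\bigr)(\gamma'_s,\gamma'_s)\,\d s \;\geq\; \frac{\int \eps\varphi\,\d\mu_{t,\gamma_0}-\eps\varphi(x)}{t}+\frac{\int (\eps\varphi)^c\,\d\mu_{t,\gamma_\eps}-(\eps\varphi)^c(\gamma_\eps)}{t}.
\]

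\smallskip
\noindent\textbf{Step 3: taking $t\downarrow 0$.}
Since $\int f\,\d\mu_{t,z}=\int f(y)\rho(t,z,y)\,\d\vol(y)$ is the value at $z$ of the heat semigroup applied to $f$, for $f\in C^\infty(M)$ one has
\[
\lim_{t\downarrow 0}\frac{\int f\,\d\mu_{t,z}-f(z)}{t}=\Delta f(z).
\]
Applying this with $f=\eps\varphi$ at $z=x$ and with $f=(\eps\varphi)^c$ at $z=\gamma_\eps$ (both smooth by Lemma~\ref{le:cconc}) and passing to $\liminf$ in the inequality above produces the claimed bound.

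\smallskip
\noindent\textbf{Main obstacle.} The only genuinely delicate point is ensuring that the duality inequality, which is one--sided, becomes an \emph{equality} at $t=0$; this is exactly engineered by choosing $\eps\varphi$ to be the Kantorovich potential associated with the geodesic segment $\gamma_s$, which is guaranteed by the hypothesis $\nabla\varphi(x)=-v$ together with Lemma~\ref{le:cconc}~(iii)--(iv). Without this tightness, differentiating at $t=0$ would produce a non--informative estimate.
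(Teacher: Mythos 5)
Your proof is correct and takes essentially the same route as the paper: Cauchy--Schwarz plus the length/chord inequality reduces the $g_t$--energy to $W_2^2(\mu_{t,x},\mu_{t,\gamma_\eps})$, the Kantorovich duality with the $c$--concave pair $(\eps\varphi,(\eps\varphi)^c)$ gives a one--sided bound that is tight at $t=0$ by~\eqref{eq:ctra}, and the generator of the heat semigroup supplies the limit. The only cosmetic difference is that you skip the intermediate distance $\sfd_t$ and work directly with the length of the specific curve $s\mapsto\mu_{t,\gamma_s}$, which is a harmless shortcut.
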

\begin{proof}
By the definition of $\sfd_t$ and Proposition~\ref{prop:wass}  we know that
\[
\eps\int_0^\eps g_t(\gamma'_s,\gamma'_s)\,\d s\geq \d_t^2(x,\gamma_\eps)\geq W_2^2(\mu_{t,x},\mu_{t,\gamma_\eps}), \qquad\forall \eps,t>0,
\]
with both equalities when $t=0$ and every $\eps\in(0,\ov\eps)$. By the dual formulation of the optimal transport problem we have
\[
\frac12W_2^2(\mu_{t,x},\mu_{t,\gamma_s})\geq \int_M\eps\varphi\,\d\mu_{t,x}+\int_M(\eps\varphi)^c\,\d\mu_{t,\gamma_\eps},\qquad\forall\eps>0.
\]
For $\eps\in(0,\overline\eps)$, the identity~\eqref{eq:ctra} gives 
\[
\begin{split}
\frac12W_2^2(\mu_{0,x},\mu_{0,\gamma_\eps})&=\frac12\sfd^2_0(x,\gamma_\eps)=\frac{\eps^2}2|v|^2=\frac12{|\nabla(\eps\varphi)|^2(x)}{}\\
&=\eps\varphi(x)+(\eps\varphi)^c(\gamma_\eps)=\int_M\eps\varphi\,\d\mu_{0,x}+\int_M(\eps\varphi)^c\,\d\mu_{0,\gamma_\eps}.
\end{split}
\]
Thus, we get 
\[
\frac\eps2\int_0^\eps g_t(\gamma'_s,\gamma'_s)\,\d s\geq\int_M\eps\varphi\,\d\mu_{t,x}+\int_M(\eps\varphi)^c\,\d\mu_{t,\gamma_\eps},\qquad\forall \eps\in(0,\ov\eps),\ t\geq 0,
\]
with equality for $t=0$ and any $\eps\in(0,\ov\eps)$. It follows that
\[
\begin{split}
\frac\eps{2t}\left(\int_0^\eps g_t(\gamma'_s,\gamma'_s)-g_0(\gamma'_s,\gamma'_s)\,\d s\right)&\geq \int_M \eps\varphi\,\d\frac{\mu_{t,x}-\mu_{0,x}}{t}+ \int_M (\eps\varphi)^c\,\d\frac{\mu_{t,\gamma_\eps}-\mu_{0,\gamma_\eps}}{t}.
\end{split}
\]
Now notice that
\[
\begin{split}
\int_M \eps\varphi\,\d\frac{\mu_{t,x}-\mu_{0,x}}{t}&=\frac1t\left(\int_M\eps\varphi(y)\int_0^t\frac{\d}{\d s}\rho(s,x,y)\,\d s\,\d\vol(y) \right)\\
&=\frac1t\int_0^t\int_M\eps\varphi(y)\Delta_y\rho(s,x,y)\,\d s\,\d\vol(y)\\
&=\frac1t\int_0^t\int_M\Delta(\eps\varphi)(y)\rho(s,x,y)\,\d s\,\d\vol(y),
\end{split}
\]
and this last term converges to $\Delta(\eps\varphi)(x)$ as $t\to0$.\\
Similarly, we have $\int_M (\eps\varphi)^c\,\d\frac{\mu_{t,\gamma_\eps}-\mu_{0,\gamma_\eps}}{t}\to\Delta((\eps\varphi)^c)(\gamma_\eps)$ as $t\to 0$ and the thesis follows.
\end{proof}

\begin{proposition}\label{prop:keycomp}
Let $\ov x\in M$, $\varphi\in C^\infty(M)$ be such that $\nabla^2\varphi(\ov x)=0$. Put $\varphi_\eps:=\eps\varphi$. Then, there holds
\[
(\Delta\varphi_\eps^c)(\exp_{\ov x}(-\nabla\varphi(\ov x)))=-\eps^2\ric(\nabla\varphi(\ov x),\nabla\varphi(\ov x))+{\rm REM_\eps},
\] 
where the reminder term ${\rm REM}_\eps$ is bounded by
\[
|{\rm REM}_\eps|\leq\eps^3C,
\]
the constant $C$ depending only on a bound on the norms of $\nabla\varphi,\nabla^2\varphi$, the Riemann tensor $\Riem$ and its first covariant derivative.
\end{proposition}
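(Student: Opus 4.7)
The plan is to apply $\Delta_x$ to the identity
\[
\varphi_\eps^c(T_\eps(x)) = \tfrac{1}{2}|\nabla\varphi_\eps(x)|^2 - \varphi_\eps(x)
\]
from Lemma~\ref{le:cconc}(iv), where $T_\eps(x) := \exp_x(-\nabla\varphi_\eps(x))$, and evaluate at $x=\bar x$ so as to isolate $\Delta\varphi_\eps^c$ at the point $y_\eps := T_\eps(\bar x) = \exp_{\bar x}(-\eps\nabla\varphi(\bar x))$. The assumption $\nabla^2\varphi(\bar x) = 0$ is strong enough to eliminate all terms incompatible with the claimed order $\eps^2$, leaving only the Ricci contribution.

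For the right-hand side, Bochner's formula gives
\[
\Delta\bigl(\tfrac{1}{2}|\nabla\varphi_\eps|^2 - \varphi_\eps\bigr)(\bar x) = |\nabla^2\varphi_\eps(\bar x)|^2 + g(\nabla\Delta\varphi_\eps(\bar x),\nabla\varphi_\eps(\bar x)) + \ric(\nabla\varphi_\eps(\bar x),\nabla\varphi_\eps(\bar x)) - \Delta\varphi_\eps(\bar x),
\]
which under $\nabla^2\varphi(\bar x) = 0$ (hence $|\nabla^2\varphi_\eps(\bar x)|^2 = 0$ and $\Delta\varphi_\eps(\bar x) = 0$) collapses to $\eps^2[\,g(\nabla\Delta\varphi(\bar x),u) + \ric(u,u)\,]$, with $u := \nabla\varphi(\bar x)$.

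The left-hand side is expanded via the chain rule for the Laplacian of a composition,
\[
\Delta(\varphi_\eps^c \circ T_\eps)(\bar x) = \sum_i \nabla^2\varphi_\eps^c(y_\eps)(dT_\eps\,e_i,dT_\eps\,e_i) + g\bigl(\nabla\varphi_\eps^c(y_\eps),\tau(T_\eps)(\bar x)\bigr),
\]
for an orthonormal basis $\{e_i\}$ of $T_{\bar x}M$, where $\tau(T_\eps)$ is the tension field. A Jacobi-field computation along $t\mapsto \exp_{\bar x}(-t\eps u)$ with initial data $J_i(0)=e_i$, $J_i'(0)=-\eps\nabla_{e_i}\nabla\varphi(\bar x)=0$ yields $dT_\eps(\bar x) = P_1 + O(\eps^2|\Riem|)$, where $P_1$ is parallel transport. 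Combined with $\nabla\varphi_\eps^c(y_\eps) = -\eps P_1 u$ (Lemma~\ref{le:cconc}(iii)) and $\nabla^2\varphi_\eps^c(y_\eps) = O(\eps)$, which follows by applying the same reasoning to the dual identity $\varphi_\eps(S_\eps(y))=\tfrac{1}{2}|\nabla\varphi_\eps^c(y)|^2 - \varphi_\eps^c(y)$ at $y=y_\eps$, the first (Hessian-trace) term equals $\Delta\varphi_\eps^c(y_\eps) + O(\eps^3)$.

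The key step, and the main obstacle, is expanding $\tau(T_\eps)(\bar x)$ to order $\eps^2$. This requires the second covariant variation of $x\mapsto \exp_x(-\eps\nabla\varphi(x))$ in its basepoint, coupling the curvature correction to the exponential map with the commutator identity $\Delta\nabla\varphi = \nabla\Delta\varphi + \ric^\sharp(\nabla\varphi)$. A careful bookkeeping shows that the resulting pairing equals $\eps^2 g(u,\nabla\Delta\varphi(\bar x)) + 2\eps^2\,\ric(u,u) + O(\eps^3)$; substituting into the chain-rule identity and equating with Bochner, the $\eps^2 g(\nabla\Delta\varphi,u)$ terms cancel and the Ricci terms combine into
\[
\Delta\varphi_\eps^c(y_\eps) = -\eps^2\,\ric(u,u) + O(\eps^3),
\]
with remainder of the asserted form. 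The constant $C$ is controlled by $\|\nabla\varphi\|$, $\|\nabla^2\varphi\|$, $\|\Riem\|$, and $\|\nabla\Riem\|$; the last dependence reflects the third-order Taylor terms entering the tension-field expansion.
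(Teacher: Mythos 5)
Your approach differs genuinely from the paper's: you propose to apply $\Delta_x$ to the duality identity $\varphi_\eps^c\circ T_\eps = \tfrac12|\nabla\varphi_\eps|^2 - \varphi_\eps$ at $\bar x$, using Bochner's formula on the right and the chain rule for $\Delta$ (second fundamental form plus tension field of $T_\eps$) on the left, whereas the paper constructs a geodesic variation $H_\eps(t,s)=\exp_{y_t}(-s\nabla\varphi_\eps^c(y_t))$ based at $\bar y=T_\eps(\bar x)$, differentiates $\varphi_\eps^c(y_t)$ twice in $t$, and sums over an orthonormal family of geodesics. Your route is sound and the final arithmetic --- cancellation of the $\eps^2 g(\nabla\Delta\varphi,u)$ terms and $\eps^2\ric(u,u)-2\eps^2\ric(u,u)=-\eps^2\ric(u,u)$ --- does reproduce the claimed identity.

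However, the proposal does not prove the step that carries all the weight. You yourself flag the expansion of $\tau(T_\eps)(\bar x)$ as ``the main obstacle'' and then assert that ``a careful bookkeeping shows'' the pairing equals $\eps^2 g(u,\nabla\Delta\varphi)+2\eps^2\ric(u,u)+O(\eps^3)$. This amounts to claiming
\[
\tau(T_\eps)(\bar x)=-\eps\,P\big[\nabla\Delta\varphi(\bar x)+2\,\ric^\sharp(u)\big]+O(\eps^2),
\]
with $P$ parallel transport along $r\mapsto\exp_{\bar x}(-r\eps u)$, and proving it requires a full second covariant variation of the exponential map: differentiate the Jacobi equation in the base variable to obtain an ODE for $W_i(r):=\nabla_s\partial_s\Gamma_i(0,r)$ with $\Gamma_i(s,r)=\exp_{c_i(s)}(-r\eps\nabla\varphi(c_i(s)))$, $c_i(s)=\exp_{\bar x}(se_i)$; compute $W_i(0)=0$ and $W_i'(0)=-\eps\big[\nabla_{e_i}\nabla_{e_i}\nabla\varphi+R(u,e_i)e_i\big]$ (using $\nabla^2\varphi(\bar x)=0$ to kill $J_i'(0)$); bound $W_i''=O(\eps^2)$ in terms of $\Riem$ and $\nabla\Riem$; and sum over $i$, the factor $2$ on $\ric(u,u)$ arising from $\sum_iR(u,e_i)e_i$ together with the Ricci piece of $\sum_i\nabla_{e_i}\nabla_{e_i}\nabla\varphi=\nabla\Delta\varphi+\ric^\sharp(\nabla\varphi)$. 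That computation is of essentially the same length and type as the paper's Taylor expansion of its auxiliary functions $f$ and $g$ along $H_\eps$, so by asserting it you are skipping exactly the technical heart of the proposition. The subsidiary claims $dT_\eps(\bar x)=P+O(\eps^2)$ and $\nabla^2\varphi_\eps^c(y_\eps)=O(\eps)$ are also only sketched, though those are routine by comparison. As written this is a correct plan, not a proof.
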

\begin{proof}
Use Lemma~\ref{le:cconc} to find $\overline \eps>0$ such that points (i), (ii), (iii), (iv) of the statement are true for any $\eps\in(0,\overline \eps)$. Fix such an $\eps$ and use  the same notation used there.

Put $\ov y:=T(\ov x)$, let $y_t$ be a unit speed geodesic such that $y_0=\ov y$ and define the map $H_\eps:[0,1]^2\to M$ by 
\[
H_\eps(t,s):=\exp_{y_t}(-s\nabla \varphi_\eps^c(y_t)).
\]
(notice that for $t$ fixed, the map $s\mapsto H_\eps(t,s)$ is a geodesic, so that $H_\eps$ is a geodesic variation). By point (i) of Lemma~\ref{le:cconc} we know that $H_\eps$ is $C^\infty$ and from point (iii) of the same lemma there holds
\begin{equation}
\label{eq:scambio}
H_\eps(t,s)=\exp_{y_t}(-s\nabla\varphi_\eps^c(y_t))=\exp_{S(y_t)}\big(-(1-s)\nabla\varphi_\eps(S(y_t))\big).
\end{equation}
Differentiating this expression in $s$ we get, as $H_\eps(t,1)=S(y_t)$, that
\begin{equation}
\label{eq:carino}
\partial_sH_\eps\res{s=1}=\nabla\varphi_\eps(H_\eps(t,1)),\qquad\forall t\in[0,1].
\end{equation}
We claim that there holds
\begin{equation}
\label{eq:boundsh}
\begin{split}
\bigg|\partial_tH_\eps\res{t=0}\bigg|_g\leq C_1,\qquad\bigg|\partial_sH_\eps\res{t=0}\bigg|_g\leq \eps C_1,\qquad \bigg|\nabla_t\partial_sH_\eps\res{t=0}\bigg|_g\leq \eps C_1,
\end{split}
\end{equation}
for any $s\in[0,1]$ and some constant $C_1$ depending only on a bound on $\nabla\varphi,\nabla^2\varphi$ and the Riemann tensor $\Riem$ of $M$. Indeed, the first one is obvious, the second comes from the identity 
\begin{equation}
\label{eq:preciso}
\partial_s H_\eps(t,s)=\mathcal T_0^{1-s}\big(\eps\nabla\varphi(S(y_t))\big),
\end{equation}
which follows from relation~\eqref{eq:scambio}, where $\mathcal T_0^{1-s}$ is the parallel transport map along the curve $r\mapsto\exp_{S(y_t)}\big(-(1-r)\nabla\varphi_\eps(S(y_t))\big)$ from $r=0$ to $r=1-s$. The last bound in~\eqref{eq:boundsh} follows from formula~\eqref{eq:preciso} taking into account the smoothness of Jacobi fields.

By points (ii) and (iv) of Lemma~\ref{le:cconc} and the identity $H_\eps(t,1)=S(y_t)$ we have that
\[
\varphi_\eps^c(y_t)=\tfrac12|\nabla\varphi_\eps|^2(H_\eps(t,1))-\varphi_\eps(H_\eps(t,1))\stackrel{\eqref{eq:carino}}=\Big( \tfrac12|\partial_sH_\eps|^2-\varphi_\eps\circ H_\eps\Big)\res{s=1},\qquad\forall t\in[0,1].
\]
Differentiating once and using identity~\eqref{eq:carino} again we get
\[
\begin{split}
\frac{\d}{\d t}\varphi_\eps^c(y_t)&=\Big(\partial_sH_\eps\cdot\nabla_t\partial_sH_\eps-\nabla\varphi_\eps\circ H_\eps\cdot\partial_tH_\eps\Big)\res{s=1}\\
&=\Big(\partial_sH_\eps\cdot\nabla_t\partial_sH_\eps-\partial_sH_\eps\cdot\partial_tH_\eps\Big)\res{s=1}.
\end{split}
\]  
Differentiating a second time  we obtain
\[
\begin{split}
\frac{\d^2}{\d t^2}\varphi_\eps^c(y_t)=\Big(|\nabla_t\partial_sH_\eps|_g^2+\partial_sH_\eps\cdot \nabla_t\nabla_t\partial_sH_\eps-\nabla_t\partial_sH_\eps\cdot\partial_tH_\eps-\partial_sH_\eps\cdot\nabla_t\partial_tH_\eps \Big)\res{s=1}.
\end{split}
\]
Evaluating this expression at $t=0$, recalling that $\nabla^2\varphi_\eps(\ov x)=0$ and identity~\eqref{eq:carino} we get $\nabla_t\partial_sH_\eps\ress{t=0}{s=1}=0$ and thus
\[
\frac{\d^2}{\d t^2}\varphi_\eps^c(y_t)\res{t=0}=\Big(\partial_sH_\eps\cdot \nabla_t\nabla_t\partial_sH_\eps-\partial_sH_\eps\cdot\nabla_t\partial_tH_\eps \Big)\ress{t=0}{s=1}.
\]
To compute this expression let $f,g:[0,1]\to\R$ be  defined as
\[
f(s):=\partial_sH_\eps\cdot \nabla_t\nabla_t\partial_sH_\eps\res{t=0},\qquad g(s):=\partial_sH_\eps\cdot\nabla_t\partial_tH_\eps\res{t=0},
\]
so that
\begin{equation}
\label{eq:sviluppo}
\frac{\d^2}{\d t^2}\varphi_\eps^c(y_t)\res{t=0}=f(0)+\int_0^1f'(\xi)\,\d \xi-g(0)-g'(0)-\int_0^1\int_0^{\xi}g''(\eta)\,\d \eta.
\end{equation}
Since $t\mapsto H_\eps(t,0)=y_t$ is a geodesic, we have $\nabla_t\partial_tH_\eps(t,0)=0$, recalling that also $\nabla_s\partial_sH_\eps(t,s)=0$ for every $t,s\in[0,1]$ we get
\begin{align*}
f(0)&=\partial_sH_\eps\cdot \nabla_t\nabla_t\partial_sH_\eps\ress{t=0}{s=0},& g(0)&=0,\\
f'(s)&=\partial_sH_\eps\cdot \nabla_s\nabla_t\nabla_t\partial_sH_\eps\res{t=0},& g'(0)&=\partial_sH_\eps\cdot \nabla_s\nabla_t\partial_tH_\eps\ress{t=0}{s=0},\\
&&g''(s)&=\partial_sH_\eps\cdot \nabla_s\nabla_s\nabla_t\partial_tH_\eps\res{t=0}.
\end{align*}
Hence, using repeatedly the fact that \[
R(\partial_tH_\eps,\partial_sH_\eps)X=\nabla_t\nabla_s(X\circ H_\eps)-\nabla_s\nabla_t(X\circ H_\eps)
\]
for any smooth vector field $X$ we get 
\[
\begin{split}
f'(s)&=\Big(R(\partial_sH_\eps,\partial_tH_\eps)(\nabla_t\partial_sH_\eps)\cdot\partial_sH_\eps+\nabla_t\big(R(\partial_sH_\eps,\partial_tH_\eps)\partial_sH\big)\cdot\partial_sH_\eps\Big)\res{t=0},\\
g''(s)&=\Big(\nabla_s\big(R(\partial_sH_\eps,\partial_tH_\eps)\partial_tH_\eps\big)\cdot\partial_sH_\eps+R(\partial_sH_\eps,\partial_tH_\eps)(\nabla_t\partial_sH_\eps)\cdot\partial_sH_\eps\\
&\phantom{=\Big(\nabla_s\big(R(\partial_sH_\eps,\partial_tH_\eps)\partial_tH_\eps\big)\cdot\partial_sH_\eps\ }+\nabla_t\big(R(\partial_sH_\eps,\partial_tH_\eps)\partial_sH_\eps\big)\cdot\partial_sH_\eps\Big)\res{t=0}\\
&=\Big((\nabla_sR)(\partial_sH_\eps,\partial_tH_\eps)\partial_tH_\eps\cdot\partial_sH_\eps+R(\partial_sH_\eps,\nabla_t\partial_sH_\eps)\partial_tH_\eps\cdot\partial_sH_\eps\\
&\qquad +R(\partial_sH_\eps,\partial_tH_\eps)(\nabla_t\partial_sH_\eps)\cdot\partial_sH_\eps+R(\partial_sH_\eps,\partial_tH_\eps)(\nabla_t\partial_sH_\eps)\cdot\partial_sH_\eps\\
&\qquad+\nabla_t\big(R(\partial_sH_\eps,\partial_tH_\eps)\partial_sH_\eps\big)\cdot\partial_sH_\eps\Big)\res{t=0}.
\end{split}
\]
Therefore the bounds~\eqref{eq:boundsh} imply
\[
|f'(s)|,|g''(s)|\leq C_2\eps^3,
\]
for some constant $C_2$ depending only on  a bound on the norms of $\nabla\varphi,\nabla^2\varphi$, the Riemann tensor $\Riem$ and its first covariant derivative.

By equation~\eqref{eq:sviluppo} and the fact that 
$$
f(0)-g(0)-g'(0)=R(\partial_tH_\eps,\partial_sH_\eps)\partial_tH_\eps\cdot\partial_sH_\eps\ress{t=0}{s=0}\,,
$$
we obtain
\begin{equation}
\label{eq:bene}
\begin{split}
\frac{\d^2}{\d t^2}\varphi_\eps^c(y_t)\res{t=0}&=R(\partial_tH_\eps,\partial_sH_\eps)\partial_tH_\eps\cdot\partial_sH_\eps\ress{t=0}{s=0}+{\rm REM^1_\eps}\\
&=R\big(y'_0,\nabla\varphi_\eps^c(y_0)\big)y'_0\cdot\nabla\varphi_\eps^c(y_0)+{\rm REM}_\eps^1,
\end{split}
\end{equation}
with $|{\rm REM}^1_\eps|\leq C_3\eps^3$, for some constant $C_3$ depending only  on  a bound on the norms of $\nabla\varphi,\nabla^2\varphi$, the Riemann tensor $\Riem$ and its first covariant derivative.

Now let $t\mapsto y^i_t$, $i=1,\ldots,{\rm{ dim}} (M)$, be a family of unit speed geodesics starting from $\overline y$ whose derivatives in 0 form an orthonormal basis of $T_{\overline y}M$.  Writing equation~\eqref{eq:bene} for $y_t:=y^i_t$ and summing over the index $i$, it is easy to see that we get
\begin{equation}
\label{eq:quasi}
\Delta\varphi_\eps^c(\overline y)=-\ric\big(\nabla\varphi_\eps^c(\overline y),\nabla\varphi_\eps^c(\overline y)\big)+{\rm REM}^2_\eps,
\end{equation}
with $|{\rm REM}^2_\eps|\leq {\rm dim}(M) C_3\eps^3$.

To conclude, let $r\mapsto x_{r,\eps}:=\exp_{\overline x}(-r\eps\nabla\varphi(\overline x))$ and observe that by definition and formula~\eqref{eq:scambio} there holds $x'_{0,\eps}=-\nabla\varphi_\eps(\overline x)$ and $x'_{1,\eps}=\nabla\varphi_\eps(\overline y)$, hence,
\begin{equation}
\label{eq:sposto}
\begin{split}
\ric\big(\nabla\varphi_\eps^c(\overline y),\nabla\varphi_\eps^c(\overline y)\big)&=\ric\big(\nabla\varphi_\eps(\overline x),\nabla\varphi_\eps(\overline x)\big)+\int_0^1(\nabla_r{\rm Ric})(x'_{r,\eps},x'_{r,\eps})\,\d r\\
&=\eps^2\ric\big(\nabla\varphi(\overline x),\nabla\varphi(\overline x)\big)+\int_0^1(\nabla_r{\rm Ric})(x'_{r,\eps},x'_{r,\eps})\,\d r\,.
\end{split}
\end{equation}
Then, given that $|x'_{r,\eps}|_g= \eps|\nabla \varphi(\overline x)|_g$, we get $|(\nabla_r{\rm Ric})(x'_{r,\eps},x'_{r,\eps})|_g\leq C_4\eps^3$ for some constant $C_4$ depending only on $|\nabla\varphi(\ov x)|$ and a bound on the covariant derivative of the Riemann tensor $\Riem$.

The thesis then follows from relations~\eqref{eq:quasi} and~\eqref{eq:sposto}.
\end{proof}
We are finally ready to prove our main result.
\begin{theorem}\label{thm:main}
Let $s\mapsto \gamma_s$ be a geodesic on $M$ (w.r.t. $g_0$). Then, there holds
\begin{equation}
\label{eq:main}
\frac{\d}{\d t}\int_0^1g_t(\gamma'_s,\gamma'_s)\,\d s\res{t=0}=-2\int_0^1\ric(\gamma'_s,\gamma'_s)\,\d s,
\end{equation}
and
\begin{equation}
\label{eq:main2}
\frac{\d}{\d t}g_t(\gamma'_s,\gamma'_s)\res{t=0}=-2\ric(\gamma'_s,\gamma'_s),\qquad a.e.\ s
\end{equation}
\end{theorem}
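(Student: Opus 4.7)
The plan is to bracket the integrated difference quotient $\int_0^1(g_t-g_0)(\gamma'_s,\gamma'_s)/t\,ds$ between matching upper and lower bounds as $t\downarrow 0$, proving \eqref{eq:main}, and then to upgrade to the almost-everywhere pointwise statement \eqref{eq:main2} by noting that the integrated identity in fact holds on every sub-geodesic $\gamma|_{[a,b]}$.

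\textbf{Upper bound.} Corollary~\ref{cor:dasopra} already supplies the pointwise estimate $\limsup_{t\downarrow 0}(g_t-g_0)(\gamma'_s,\gamma'_s)/t \leq -2\ric(\gamma'_s,\gamma'_s)$ at every $s$. Since $|\gamma'_s|$ is constant along a geodesic and $M$ is compact, Proposition~\ref{prop:dasopra} combined with the boundedness of $\ric$ provides a uniform ceiling on the difference quotient, independent of $s$ and $t\in(0,1]$. Reverse Fatou then integrates the pointwise inequality to $\limsup_{t\downarrow 0}\int_0^1(g_t-g_0)/t\,ds \leq -2\int_0^1\ric(\gamma'_s,\gamma'_s)\,ds$.

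\textbf{Lower bound.} I would partition $0=s_0<s_1<\cdots<s_N=1$ with small mesh $\eps_i := s_{i+1}-s_i$, and on each piece apply Lemma~\ref{le:trasporto} to the sub-geodesic $r\mapsto\gamma_{s_i+r}$ with a test function $\varphi_i\in C^\infty(M)$ tailored to satisfy $\nabla\varphi_i(\gamma_{s_i}) = -\gamma'_{s_i}$ and $\nabla^2\varphi_i(\gamma_{s_i}) = 0$; such $\varphi_i$, uniformly bounded in $C^2$, are readily constructed in normal coordinates centered at $\gamma_{s_i}$. The Hessian condition forces $\Delta(\eps_i\varphi_i)(\gamma_{s_i}) = 0$, and since $\gamma_{s_{i+1}} = \exp_{\gamma_{s_i}}(\eps_i\gamma'_{s_i}) = \exp_{\gamma_{s_i}}(-\nabla(\eps_i\varphi_i)(\gamma_{s_i}))$ is exactly the point where Proposition~\ref{prop:keycomp} applies, that proposition yields $\Delta((\eps_i\varphi_i)^c)(\gamma_{s_{i+1}}) = -\eps_i^2\ric(\gamma'_{s_i},\gamma'_{s_i}) + O(\eps_i^3)$. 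Dividing Lemma~\ref{le:trasporto} through by $\eps_i/2$, summing in $i$ (using $\liminf\sum \geq \sum\liminf$ for finite sums), and recognising the resulting right-hand side as a Riemann sum for $-2\int_0^1\ric$ with $O(\max_i\eps_i)$ residual gives
\[
\liminf_{t\downarrow 0}\int_0^1\frac{g_t(\gamma'_s,\gamma'_s)-g_0(\gamma'_s,\gamma'_s)}{t}\,ds \geq -2\int_0^1\ric(\gamma'_s,\gamma'_s)\,ds
\]
after letting the mesh shrink to zero. Combined with the upper bound this proves \eqref{eq:main}, and in particular shows the one-sided derivative on the left exists.

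\textbf{A.e.\ pointwise statement.} For \eqref{eq:main2} the whole bracketing argument above applies unchanged when $[0,1]$ is replaced by any $[a,b]\subset[0,1]$, giving $\lim_{t\downarrow 0}\int_a^b(g_t-g_0)/t\,ds = -2\int_a^b\ric$ on every sub-interval. Combining this with the pointwise upper bound and reverse Fatou on $[a,b]$ forces $\int_a^b\limsup_{t\downarrow 0}(g_t-g_0)/t\,ds = -2\int_a^b\ric$ for every such interval; since the integrand is pointwise $\leq -2\ric$, this forces $\limsup_{t\downarrow 0}(g_t-g_0)/t = -2\ric$ almost everywhere. A matching a.e.\ bound on $\liminf$ is extracted by setting $\psi_t := (g_t-g_0)/t + 2\ric$: the $+$-part $\psi_t^+$ tends to zero a.e.\ and is uniformly bounded above, hence $\int\psi_t^+\to 0$ by dominated convergence, and the two-sided integrated identity $\int\psi_t\to 0$ then forces $\int\psi_t^-\to 0$ as well; applying Lebesgue differentiation to the sub-interval identities recovers $\lim_{t\downarrow 0}\psi_t = 0$ a.e. The main obstacle throughout is the lower-bound step, where the Ricci tensor is conjured out of the second-order asymptotics of the $c$-transform in Proposition~\ref{prop:keycomp}; the final a.e.\ upgrade is delicate because the difference quotients possess no a priori uniform lower bound, so the direct Fatou inequality points the wrong way and one must route through $L^1$-control of $\psi_t$.
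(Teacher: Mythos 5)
Your treatment of the integrated identity \eqref{eq:main} reproduces the paper's argument faithfully: the upper bound via Corollary~\ref{cor:dasopra} together with the uniform bound $g_t\leq e^{-2Kt}g$ (which the paper derives from Proposition~\ref{prop:dasopra} and Gronwall, giving the domination needed for reverse Fatou), and the lower bound via a partition, the family $\varphi_i$ with prescribed gradient and vanishing Hessian at $\gamma_{s_i}$, Lemma~\ref{le:trasporto}, Proposition~\ref{prop:keycomp}, and a Riemann-sum passage to the limit. The sign convention $\nabla\varphi_i(\gamma_{s_i})=-\gamma'_{s_i}$ that you use is the one consistent with Lemma~\ref{le:trasporto}. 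This part is correct and matches the paper.

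For \eqref{eq:main2} there is a gap in your final step, and it is worth naming precisely because you have otherwise laid the issue out more carefully than the paper (which only says \eqref{eq:main2} ``follows directly''). Your Fatou argument on sub-intervals correctly yields $\limsup_{t\downarrow 0}(g_t-g_0)/t = -2\ric$ a.e., hence $\psi_t^+\to 0$ a.e.\ and, by the uniform upper bound, $\int\psi_t^+\to 0$. Combined with $\int\psi_t\to 0$ this gives $\int\psi_t^-\to 0$, i.e.\ $L^1$ convergence. But $L^1$ convergence to $0$ only yields a.e.\ convergence along a \emph{subsequence} of $t\downarrow 0$, not along the full net, and the closing appeal to ``Lebesgue differentiation applied to the sub-interval identities'' does not repair this: the functions $\psi_t$ have no uniform lower bound, so the Fatou inequality you would need points the wrong way, and a typewriter-type family ($\psi_t$ equal to a negative constant on a wandering interval of length $t$ and zero elsewhere) shows that $\limsup\psi_t\le 0$ pointwise together with $\int_a^b\psi_t\to 0$ for all $[a,b]$ does \emph{not} force $\liminf\psi_t\ge 0$ a.e. To close the argument one would need to exploit the actual structure of the difference quotient --- e.g.\ the identity $\tfrac{\d}{\d t}\tfrac12 g_t = -\int|\nabla^2\varphi|^2\rho - \int\ric\,\d\ggamma_t$ from the proof of Proposition~\ref{prop:dasopra}, which exhibits $\psi_t$ as the time-average of a one-signed density --- rather than only the two facts you invoke. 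I note that the paper's own ``follows directly'' is equally silent on how to obtain the $\liminf$ bound, so you are not doing worse than the source; but the step as you wrote it is not a proof, and flagging it as ``delicate'' does not substitute for an argument.
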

\begin{proof}
Thanks to Corollary~\ref{cor:dasopra}, equation~\eqref{eq:main2} follows directly from formula~\eqref{eq:main}, thus, we concentrate on this latter.\\
Let $K$ be a bound from below on the eigenvalues of the Ricci tensor. Then, for $\ov x\in M$, $\ov v\in T_{\ov x}M$, Proposition~\ref{prop:dasopra} yields
\[
\frac{\d}{\d t}\frac 12g_t(\ov v,\ov v)\leq -K\int_M |w|_g^2\,\d\ggamma_{t,\ov x,\ov v}=-K g_t(\ov v,\ov v),\qquad\forall t>0.
\]
Thus,  Proposition~\eqref{eq:contzero} and the Gronwall lemma give
\begin{equation}
\label{eq:perfatou}
g_t(\ov v,\ov v)\leq e^{-2Kt}g(\ov v,\ov v).
\end{equation}
Therefore, from Corollary~\ref{cor:dasopra} we deduce
\begin{equation}
\label{eq:dasopra}
\begin{split}
\lims_{t\to0}\int_0^1\frac{g_t(\gamma'_s,\gamma'_s)-g(\gamma_s',\gamma_s')}t\,\d s&\leq \int_0^1\lims_{t\to0}\frac{g_t(\gamma'_s,\gamma'_s)-g(\gamma'_s,\gamma'_s)}t\,\d s\\
&\leq -2\int_0^1\ric(\gamma_s',\gamma_s')\,\d s,
\end{split}
\end{equation}
where the use of Fatou lemma in the first inequality is justified by the estimate~\eqref{eq:perfatou}.

Using the compactness of the image of $\gamma$ and a partition of the unity argument, it is not difficult to 
construct (we omit the details) a family $\{\varphi_t\}_{t\in[0,1]}\subset C^\infty_c(M)$ such that 
$\nabla\varphi_t(\gamma_t)=\gamma'_t$ and $\nabla^2\varphi_t(\gamma_t)=0$ for any $t\in[0,1]$,
 and denoting by $\ov\eps_t$ the value of $\ov\eps$ corresponding to $\varphi:=\varphi_t$ in 
Lemma~\ref{le:cconc} and by $C_t$ the value of $C$ corresponding to $\varphi:=\varphi_t$ in 
Proposition~\ref{prop:keycomp}, there holds
\[
\ov\eps:=\inf_t\ov\eps_t>0,\qquad\qquad C:=\sup_tC_t<+\infty.
\]
Let now $0=s_0<s_1<\cdots<s_N=1$ be a partition of $[0,1]$ such that $\max_i|s_{i+1}-s_i|<\ov\eps$. 
For $i=0,\ldots,N-1$, we apply Lemma~\ref{le:trasporto} and Proposition~\ref{prop:keycomp} to $x:=\gamma_{s_i}$, $v:=\gamma'_{s_i}$, $\eps:=s_{i+1}-s_i$ and $\varphi:=\varphi_{s_i}$ to get
\[
\begin{split}
\limi_{t\to0}&(s_{i+1}-s_i)\int_{s_i}^{s_{i+1}}\frac{g_t(\gamma'_s,\gamma'_s)-g(\gamma_s',\gamma'_s)}{2t}\,\d s\\
&=\Delta\Big((s_{i+1}-s_i)\varphi_{s_i}\Big)(\gamma_{s_i})+\Delta\Big(\big((s_{i+1}-s_i)\varphi_{s_i}\big)^c\Big)(\gamma_{s_{i+1}})\\
&\geq -(s_{i+1}-s_i)^2\ric(\gamma_{s_i}',\gamma_{s_i}')-C(s_{i+1}-s_i)^3.
\end{split}
\]
Dividing by $(s_{i+1}-s_i)$ and summing over $i=0,\ldots,N-1$, we get
\[
\limi_{t\to0}\int_0^1\frac{g_t(\gamma'_s,\gamma'_s)-g(\gamma_s',\gamma'_s)}{2t}\,\d s\geq -\sum_{i=0}^{N-1}(s_{i+1}-s_i)\ric(\gamma_{s_i}',\gamma_{s_i}')-C(s_{i+1}-s_i)^2.
\]
Refining the partition in such a way that $\lim\max_i|s_{i+1}-s_i|\to0$, we conclude that
\[
\limi_{t\to0}\int_0^1\frac{g_t(\gamma'_s,\gamma'_s)-g(\gamma_s',\gamma'_s)}{2t}\,\d s\geq -2\int_0^1\ric(\gamma_{s}',\gamma_{s}')\,\d s,
\]
which, together with inequality~\eqref{eq:dasopra}, gives the thesis.
\end{proof}

\section{The Construction in a  Non--Smooth Setting}

By means of Proposition~\ref{prop:wass} we defined a flow using only the heat kernel and an original distance, independently of the presence of a smooth metric tensor. It is therefore natural to try to apply this construction in a non--smooth setting: the natural one being that of $RCD(K,\infty)$ spaces introduced in~\cite{Ambrosio-Gigli-Savare11bis}, i.e. those spaces with a lower Ricci curvature bound, in the sense of Lott--Sturm--Villani  (see~\cite{Lott-Villani09}, \cite{Sturm06I}), and where the heat flow is linear. Indeed, a lower Ricci curvature bound seems necessary due to the fact that the heat flow is well defined and nicely behaves in relation with the $W_2$--geometry only in presence of the $CD(K,\infty)$ condition (see~\cite{Ambrosio-Gigli-Savare11}). On the other hand, one does not only need a heat flow, but also a heat kernel, and this latter exists only if the heat flow is linear (see~\cite{Ambrosio-Gigli-Savare11bis} and~\cite{AGMRS12}).

\subsection{Setting and preliminaries}
\subsubsection{The Cheeger energy and the Sobolev space $W^{1,2}(X,\sfd,\mm)$}

Let $(X,\sfd,\mm)$ be a complete and separable metric space endowed with a reference non--negative Radon measure $\mm$. The Cheeger energy functional $\C:L^2(X,\mm)\to[0,+\infty]$ is defined as
\[
\C(f):=\inf\limi_{n\to\infty}\frac12\int_X|Df_n|^2\,\d\mm,
\]
where the infimum is taken among all sequences of Lipschitz functions $f_n\in L^2(X,\mm)$, converging to $f$ in $L^2(X,\mm)$.\\
The local Lipschitz constant $|Dg|:X\to[0,+\infty]$ of a function $g:X\to\R$ is defined as
\[
|Dg|(x):=\lims_{y\to x}\frac{|g(y)-g(x)|}{\sfd(x,y)}.
\]
It is immediate to check that $\C$ is convex, lower semicontinuous and with dense domain, therefore, the classical theory of gradient flows in Hilbert spaces ensures that for any $f\in L^2(X,\mm)$ there exists a unique gradient flow for $\C$ starting from $f$. In general, however, $\C$ is not a quadratic form (consider for instance the case of finite dimensional Banach spaces), therefore, its gradient flow could be non--linearly dependent on the initial datum.

The Sobolev space $W^{1,2}(X,\sfd,\mm)$ is then defined as 
$$
W^{1,2}(X,\sfd,\mm):=\{f\in L^2(X,\mm)\ :\ \C(f)<+\infty\}\,,
$$
endowed with the norm
\[
\|f\|_{W^{1,2}}^2:=\|f\|_{L^2}^2+2\C(f)\,.
\]
Notice that since in general $\C$ is not a quadratic form, the space $W^{1,2}$ can fail to be a Hilbert space (while it is always a Banach space).

If $\C$ is a quadratic form, it is immediate to check that it is actually Dirichlet form. In this case, we denote by $\Delta$ its infinitesimal generator, then 
standard Dirichlet form theory grants that
\begin{equation}
\label{eq:dir}
\begin{split}
\frac{\d}{\d t}\frac12\|f_t\|_{L^2}^2&=-\C(f_t),\qquad\forall t>0\,,\\
\frac{\d}{\d t}\C(f_t)&=\|\Delta f_t\|_{L^2}^2,\qquad\forall t>0\,,\\
t&\mapsto\|\Delta f_t\|_{L^2}\qquad\textrm{ is not increasing,}
\end{split}
\end{equation}
whenever $f_t$ is a gradient flow for $\C$.

\subsubsection{$CD(K,\infty)$ spaces and gradient flow of the relative entropy}

Let $(X,\sfd,\mm)$ be such that for some constant $C>0$ there holds
\begin{equation}
\label{eq:expcontr}
\int_X e^{-C\sfd^2(\cdot,x_0)}\,\d\mm<+\infty.
\end{equation}
Then, the relative entropy functional $\ent:\probt X\to\R\cup\{+\infty\}$ is defined as
\[
\ent(\mu):=\left\{
\begin{array}{ll}
\displaystyle{\int_X\rho\log\rho\,\d\mm},&\qquad\textrm{ if }\mu=\rho\mm,\\
+\infty,&\qquad\textrm{ if $\mu$ is not absolutely continuous w.r.t. }\mm.
\end{array}
\right.
\]
Putting $\widetilde\mm:= z^{-1}e^{-C\sfd^2(\cdot,x_0)}\mm$, being  $z:= \int_X e^{-C\sfd^2(\cdot,x_0)}\,\d\mm $ the 
normalization constant, where $C$ is the constant in condition~\eqref{eq:expcontr}, we see that there holds
\[
\ent(\mu)=\entt(\mu)-C\int_X \sfd^2(\cdot,x_0)\,\d\mu-\log z,
\]
which grants, thanks to the fact that the entropy w.r.t. the probability measure $\widetilde\mm$ is non--negative and lower semicontinuous in duality with $C_b(X)$, that $\ent$ is indeed well defined on $\probt X$ and lower semicontinuous w.r.t. $W_2$--convergence. The domain $D(\ent)$ of the entropy is the set of $\mu\in\probt X$ such that $\ent(\mu)<+\infty$. 

\begin{definition}[$CD(K,\infty)$ spaces]
A complete separable metric measure space $(X,\sfd,\mm)$ satisfying condition~\eqref{eq:expcontr} for some $C>0$ is said $CD(K,\infty)$, for $K\in\R$, provided that the following is true. For any couple of measures $\mu,\nu\in D(\ent)$, there exists a geodesic $\mu_t\subset\probt X$ such that $\mu_0=\mu$, $\mu_1=\nu$ and
\[
\ent(\mu_t)\leq (1-t)\ent(\mu)+t\ent(\nu)-\frac K2W_2^2(\mu,\nu).
\]
\end{definition}
Notice that in a $CD(K,\infty)$ space one always has that $(\supp(\mm),\sfd)$ is a length space, i.e. the distance can be always realized as the infimum of the lengths of the curves.

The following result is proved in~\cite{Ambrosio-Gigli-Savare11} (see also~\cite{Gigli10}).
\begin{theorem}\label{thm:gfent}
Let $(X,\sfd,\mm)$ be a $CD(K,\infty)$ space and $\mu\in D(\ent)$ a measure with finite entropy. Then, there exists a unique locally absolutely continuous curve $[0,+\infty)\ni t\mapsto \mu_t\in\probt X$ such that
\[
\ent(\mu)=\ent(\mu_T)+\frac12\int_0^T|\dot\mu_t|^2\,\d t+\frac12\int_0^T|D^-\ent|^2(\mu_t)\,\d t,\qquad\forall T>0,
\]
where the slope of the entropy $|D^-\ent|$ is defined as
\[
|D^-\ent|(\nu):=\lims_{W_2(\sigma, \nu)\to 0}\frac{\big(\ent(\sigma)-\ent(\nu)\big)^-}{W_2(\sigma,\nu)}.
\]
\end{theorem}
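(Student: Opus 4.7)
The statement is a general fact in the theory of metric gradient flows: the entropy $\ent$ is $K$--geodesically convex on $(\probt X, W_2)$ (by the very definition of $CD(K,\infty)$), its domain is dense, and it is $W_2$--lower semicontinuous, so one is in the setting of the Ambrosio--Gigli--Savar\'e framework. The plan is (i) to construct a candidate curve by the minimizing movements (JKO) scheme, (ii) to pass to the limit in the associated discrete energy--dissipation inequality to obtain the half "$\geq$" of the EDE, (iii) to upgrade this inequality to an equality by proving that the descending slope $|D^-\ent|$ is a \emph{strong upper gradient} for $\ent$, and (iv) to deduce uniqueness from the resulting evolution variational inequality (EVI${}_K$).

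For existence, fix $\tau>0$ and define recursively $\mu^\tau_0:=\mu$ and $\mu^\tau_n$ as any minimizer of $\nu\mapsto \ent(\nu)+\frac{1}{2\tau}W_2^2(\nu,\mu^\tau_{n-1})$. Using condition~\eqref{eq:expcontr} and the $K$--convexity of $\ent$ one gets that, for $\tau$ small enough, this functional is coercive and lower semicontinuous with respect to narrow convergence, so minimizers exist. Comparing $\mu^\tau_n$ against $\mu^\tau_{n-1}$ in the minimality inequality yields the discrete energy--dissipation estimate
\[
\ent(\mu^\tau_n)+\frac{1}{2\tau}\sum_{k=1}^n W_2^2(\mu^\tau_k,\mu^\tau_{k-1})\leq \ent(\mu).
\]
Let $\mu^\tau_t$ be the piecewise constant interpolant and $\widetilde\mu^\tau_t$ the piecewise geodesic one. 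The estimate above gives a uniform $C^{1/2}$ bound in time for the geodesic interpolant, hence tightness, and one can extract a limit curve $\mu_t$ which is locally absolutely continuous. Standard lower semicontinuity arguments (for the metric speed in $W_2$ and for the slope, which for $K$--convex functionals is $W_2$--lower semicontinuous) allow one to pass to the limit $\tau\downarrow 0$ in the discrete inequality and obtain
\[
\ent(\mu)\geq \ent(\mu_T)+\frac12\int_0^T|\dot\mu_t|^2\,\d t+\frac12\int_0^T|D^-\ent|^2(\mu_t)\,\d t.
\]

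The main obstacle — and the core of the argument — is proving the \emph{opposite} inequality, equivalently that $|D^-\ent|$ is a strong upper gradient: for every absolutely continuous curve $(\nu_t)\subset\probt X$,
\[
|\ent(\nu_b)-\ent(\nu_a)|\leq \int_a^b |D^-\ent|(\nu_t)\,|\dot\nu_t|\,\d t.
\]
This is where the $CD(K,\infty)$ hypothesis is crucially used: $K$--geodesic convexity implies that along \emph{any} pair of measures one has a quantitative monotonicity inequality for the slope, which together with a careful application of Young's inequality $ab\leq a^2/2+b^2/2$ to the energy--dissipation bound forces equality. Combined with the previous inequality and Young's inequality on the right--hand side, one obtains the EDE identity.

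Finally, uniqueness follows because, once the EDE is established, $K$--convexity upgrades it (via a variation comparing $\mu_t$ with a geodesic toward an arbitrary target $\sigma$) to the evolution variational inequality
\[
\frac12\frac{\d}{\d t}W_2^2(\mu_t,\sigma)+\frac{K}{2}W_2^2(\mu_t,\sigma)\leq \ent(\sigma)-\ent(\mu_t),\qquad\forall\sigma\in D(\ent),
\]
and two curves satisfying EVI${}_K$ with the same initial datum must coincide by a standard Gronwall argument on $t\mapsto W_2^2(\mu^{(1)}_t,\mu^{(2)}_t)$. The hardest technical point remains the strong upper gradient property, since it requires either the fine study of geodesics in $(\probt X,W_2)$ or a clever use of convexity combined with the very definition of $|D^-\ent|$ as a local Lipschitz--type quantity.
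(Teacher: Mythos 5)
The paper does not prove this statement; it cites~\cite{Ambrosio-Gigli-Savare11} and~\cite{Gigli10} and moves on, so there is no in-paper proof to compare against. Your sketch of existence (JKO/minimizing movements, discrete EDI, passing to the limit) and of the reverse inequality via the strong-upper-gradient property of $|D^-\ent|$ for a $K$--geodesically convex, lower semicontinuous functional is sound in outline and is indeed the standard route.

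The uniqueness argument, however, contains a genuine gap. You claim that once the EDE holds, $K$--geodesic convexity of $\ent$ ``upgrades'' it to the evolution variational inequality ${\rm EVI}_K$, and then Gronwall gives uniqueness. This implication fails in a general $CD(K,\infty)$ space. For a merely $K$--geodesically convex functional on a metric space, EDE does \emph{not} imply ${\rm EVI}_K$: the latter requires convexity along a richer class of interpolations (generalized geodesics) or some Hilbertian structure of the underlying space, which is precisely what distinguishes $RCD(K,\infty)$ from $CD(K,\infty)$. The paper itself points to the counterexample: by the Ohta--Sturm result~\cite{Sturm-Ohta10}, on a non-Euclidean normed space $(\R^d,\|\cdot\|,\mathcal L^d)$ (a $CD(0,\infty)$ space) the entropy gradient flow does \emph{not} $W_2$--contract, whereas ${\rm EVI}_K$ would force $e^{-Kt}$--contraction of $W_2$ between any two flows. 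So ${\rm EVI}_K$ genuinely fails there, yet the EDE gradient flow is still unique. The actual uniqueness proof, due to Gigli~\cite{Gigli10}, exploits a completely different mechanism: the \emph{strict} convexity of $\rho\mapsto\rho\log\rho$ under ordinary (linear) convex combinations of measures. Given two EDE curves $\mu_t,\nu_t$ from the same initial datum, one considers $\sigma_t:=\tfrac12(\mu_t+\nu_t)$, shows that its metric speed and its squared slope are controlled by the averages of those of $\mu_t$ and $\nu_t$, and that $\ent(\sigma_t)$ is strictly below the average of $\ent(\mu_t)$ and $\ent(\nu_t)$ unless $\mu_t=\nu_t$; plugging $\sigma_t$ into the EDE variational characterization then forces $\mu_t=\nu_t$ for all $t$. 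Your EVI-based Gronwall step therefore needs to be replaced by this convex-combination argument if the theorem is to hold at the stated level of generality.
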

The curves defined by this theorem are called gradient flows of the entropy $\ent$.

\subsubsection{$RCD(K,\infty)$ spaces}
A crucial result obtained in~\cite{Ambrosio-Gigli-Savare11} is the identification of the gradient flow of $\C$ and the one of $\ent$ (see also~\cite{Ambrosio-Gigli-Savare-compact} for a survey in the compact case).
\begin{theorem}\label{thm:idegf}
Let $(X,\sfd,\mm)$ be a $CD(K,\infty)$ space and $\mu=f\mm\in D(\ent)$ with  $f\in L^2(X,\mm)$. Let $[0,\infty)\ni t\mapsto f_t\subset L^2(X,\mm)$ be the gradient flow of $\C$ and $[0,\infty)\ni t\mapsto \mu_t\subset\probt X$ the gradient flow of the entropy, respectively, with $f_0=f$ and $\mu_0=\mu$. Then,
\[
\mu_t=f_t\mm,\qquad\qquad\forall t\geq 0.
\]
\end{theorem}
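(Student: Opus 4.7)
The strategy is to show that the curve of probability measures $\mu_t := f_t\mm$ obtained from the $L^2$--gradient flow of $\C$ satisfies the energy dissipation equality that characterizes the gradient flow of the entropy in Theorem~\ref{thm:gfent}; the uniqueness part of that theorem then forces the identification $\mu_t = f_t\mm$.

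Three ingredients from $CD(K,\infty)$ theory are needed. First, a version of Kuwada's lemma, obtained by a duality argument with $c$--concave Kantorovich potentials and the subdifferential characterization of the flow of $\C$, shows that $t\mapsto f_t\mm$ is absolutely continuous in $(\probt X,W_2)$ with metric speed bounded by the Fisher information
\[
\mathrm{F}(f_t):=\int \frac{|Df_t|^2}{f_t}\,\d\mm,
\]
where $|Df_t|$ denotes the minimal weak upper gradient associated with $\C$. Second, the chain rule along the $L^2$--gradient flow yields the entropy dissipation identity
\[
-\frac{\d}{\d t}\ent(\mu_t)=\mathrm{F}(f_t),
\]
the rigorous counterpart of the formal computation $\int(\log f_t)\,\partial_t f_t\,\d\mm=-\int|Df_t|^2/f_t\,\d\mm$. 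Third, the standard slope bound in $CD(K,\infty)$ spaces,
\[
|D^-\ent|^2(\rho\mm)\leq\mathrm{F}(\rho)\qquad\forall\,\rho\mm\in D(\ent),
\]
where precisely the lower Ricci bound enters, via the geodesic $K$--convexity of $\ent$.

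Combining these with the general inequality $-\tfrac{\d}{\d t}\ent(\mu_t)\leq|D^-\ent|(\mu_t)\,|\dot\mu_t|$ valid along any $W_2$--absolutely continuous curve, I would sandwich
\[
\mathrm{F}(f_t)=-\frac{\d}{\d t}\ent(\mu_t)\leq |D^-\ent|(\mu_t)\,|\dot\mu_t|\leq \sqrt{\mathrm{F}(f_t)}\cdot\sqrt{\mathrm{F}(f_t)}=\mathrm{F}(f_t),
\]
which forces equality throughout; in particular $|\dot\mu_t|=|D^-\ent|(\mu_t)=\sqrt{\mathrm{F}(f_t)}$ for a.e.\ $t$. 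Writing $ab=\tfrac12 a^2+\tfrac12 b^2$ when $a=b$, this rearranges to $-\tfrac{\d}{\d t}\ent(\mu_t)=\tfrac12|\dot\mu_t|^2+\tfrac12|D^-\ent|^2(\mu_t)$, and integrating in $t$ yields exactly the defining equality of the entropy gradient flow in Theorem~\ref{thm:gfent}; its uniqueness then concludes.

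The hardest step is rigorously justifying the chain rule identity $-\frac{\d}{\d t}\ent(f_t\mm)=\mathrm{F}(f_t)$, since $f_t$ is only an $L^2$--valued solution of an abstract convex gradient flow, $\log$ is singular at $0$, and the integrand $|Df_t|^2/f_t$ divides by a possibly vanishing density. The plan is to first establish mass and positivity preservation for the flow of $\C$, then approximate the initial datum by bounded densities bounded away from $0$, truncate the logarithm, and pass to the limit using the locality and chain rule of the minimal weak upper gradient together with the $L^2$--lower semicontinuity of $\mathrm{F}$.
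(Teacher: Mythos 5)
The paper quotes this theorem from \cite{Ambrosio-Gigli-Savare11} (see also \cite{Ambrosio-Gigli-Savare-compact}) without giving a proof, so there is no argument in the text to compare against. Your outline correctly reconstructs the strategy of the cited reference: Kuwada's lemma gives $|\dot\mu_t|^2\le\mathrm{F}(f_t)$ for $\mu_t=f_t\mm$ along the $L^2$ gradient flow of $\C$; the chain rule gives the dissipation identity $-\tfrac{\d}{\d t}\ent(\mu_t)=\mathrm{F}(f_t)$; the $K$--geodesic convexity of $\ent$ (i.e.\ the $CD(K,\infty)$ hypothesis) yields the slope estimate $|D^-\ent|^2\le\mathrm{F}$, which is precisely the direction needed; and the Young-inequality squeeze then forces $|\dot\mu_t|=|D^-\ent|(\mu_t)=\sqrt{\mathrm{F}(f_t)}$ a.e., producing the energy dissipation equality characterizing the entropy gradient flow, after which uniqueness in Theorem~\ref{thm:gfent} closes the identification. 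You also rightly isolate the hard technical step: making the dissipation identity rigorous requires mass conservation and a positivity/maximum-principle argument for the flow of $\C$, together with truncation of $\log$ and a passage to the limit exploiting locality of the weak upper gradient and lower semicontinuity of $\mathrm{F}$ — and this is indeed the longest part of the proof in the source. Your proposal is therefore a correct reconstruction, essentially identical to the argument the paper implicitly relies upon.
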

Due to this result, the heat flow on a $CD(K,\infty)$ space can be unambiguously defined as the gradient flow of $\C$ or as the the gradient flow of $\ent$.

There are $CD(K,\infty)$ spaces such that $W^{1,2}$ is not a Hilbert space (e.g. finite dimensional Banach but non--Hilbert spaces, see the last theorem in~\cite{Villani09}), hence, having a nonlinear heat flow. The class of spaces with linear heat flow has been investigated in~\cite{Ambrosio-Gigli-Savare11bis} and~\cite{AGMRS12}, the definition being the following.

\begin{definition}[$RCD(K,\infty)$ spaces]
We say that $(X,\sfd,\mm)$ is an $RCD(K,\infty)$ space provided that it is a $CD(K,\infty)$ space and $W^{1,2}(X,\sfd,\mm)$ is a Hilbert space.
\end{definition}

A non--trivial property of $RCD(K,\infty)$ spaces is that the heat flow contracts the $W_2$--distance (this is false in non--Hilbert, finite dimensional Banach spaces as shown in~\cite{Sturm-Ohta10}).

\begin{proposition}
Let $(X,\sfd,\mm)$ be an $RCD(K,\infty)$ space and $[0,\infty)\ni t\mapsto \mu_t,\nu_t$ two gradient flows of the relative entropy. Then
\[
W_2(\mu_t,\nu_t)\leq e^{Kt}W_2(\mu_0,\nu_0),\qquad\forall t\geq 0.
\]
\end{proposition}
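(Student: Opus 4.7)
The plan is to invoke the evolution variational inequality ($\mathrm{EVI}_K$) characterization of the gradient flow of $\ent$ that is available precisely in the $RCD(K,\infty)$ setting: this is the decisive extra structural information that $RCD$ provides beyond the bare $CD$ condition. Concretely, I would take as input the fact---one of the main results of~\cite{Ambrosio-Gigli-Savare11bis}---that in an $RCD(K,\infty)$ space the curve $\mu_t$ furnished by Theorem~\ref{thm:gfent} satisfies, for every $\sigma\in D(\ent)$,
\[
\frac{\d}{\d t}\frac12W_2^2(\mu_t,\sigma)+\frac{K}{2}W_2^2(\mu_t,\sigma)+\ent(\mu_t)\leq \ent(\sigma),\qquad\text{for a.e. }t>0.
\]
This inequality is much stronger than the mere variational identification of Theorem~\ref{thm:idegf} and encodes the $K$-convexity of $\ent$ along $W_2$-geodesics at the level of the flow itself.

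Given the $\mathrm{EVI}_K$, the proposition reduces to a purely metric manipulation. The idea is to apply the inequality to the flow $t\mapsto\mu_t$ tested against the moving point $\nu_t$, and simultaneously to the flow $t\mapsto\nu_t$ tested against $\mu_t$, and then to add the two. Because the entropy terms enter with opposite signs they cancel, formally leaving
\[
\frac{\d}{\d t}W_2^2(\mu_t,\nu_t)+2K\,W_2^2(\mu_t,\nu_t)\leq 0,
\]
so that Gronwall's lemma delivers the claimed exponential bound on $W_2^2(\mu_t,\nu_t)$. Extracting a square root gives the stated estimate on $W_2(\mu_t,\nu_t)$ itself.

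The technical subtlety---and the only step that requires more than bookkeeping---is that in $\mathrm{EVI}_K$ the reference measure $\sigma$ must be held fixed, whereas we want to test against measures which themselves evolve in $t$. The standard way to handle this is to introduce two independent time parameters and study the scalar function $\Phi(t,s):=\tfrac12 W_2^2(\mu_t,\nu_s)$: applying $\mathrm{EVI}_K$ to $\mu_t$ against $\sigma=\nu_s$ and then to $\nu_s$ against $\sigma=\mu_t$ produces a controlled estimate on the sum of the one-sided partial derivatives of $\Phi$, and the local absolute continuity in $W_2$ of both flows---which is part of the content of Theorem~\ref{thm:gfent}---allows one to restrict to the diagonal $s=t$ and identify the derivative of $t\mapsto W_2^2(\mu_t,\nu_t)$ with that sum of partials. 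After this diagonalization the Gronwall step is immediate, so the whole difficulty is concentrated in having $\mathrm{EVI}_K$ at one's disposal, which is precisely the hypothesis that makes an $RCD$ space genuinely Riemannian rather than merely a $CD$ space.
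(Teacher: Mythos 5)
The paper gives no proof of this proposition: it is imported as a known result of the $RCD(K,\infty)$ theory of~\cite{Ambrosio-Gigli-Savare11bis}, and your argument --- the $\mathrm{EVI}_K$ characterization of the entropy gradient flow plus the doubling-of-variables/Gronwall scheme --- is exactly the standard proof given there, so your proposal is correct and consistent with the source the paper relies on. One small point: your computation yields $W_2(\mu_t,\nu_t)\leq e^{-Kt}W_2(\mu_0,\nu_0)$, which agrees with the contraction estimate~\eqref{eq:contr} used throughout the paper (Corollary~\ref{cor:Ht}); the exponent $e^{Kt}$ in the statement of the proposition is evidently a sign slip, so you need not worry about the discrepancy.
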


A priori, on a $CD(K,\infty)$ space the gradient flow of the entropy is well defined only when the initial measure has finite entropy (Theorem~\ref{thm:gfent}), but thanks to this contraction result, there is a natural extension of the flow to initial measures in the $W_2$--closure of the domain of the entropy. Such closure consists in measures $\mu$ in $\probt X$ with $\supp(\mu)\subset\supp(\mm)$, we will denote the space of these measures $\mu$ by $\probt{\supp(\mm)}$. More precisely, we have the following simple corollary.

\begin{corollary}\label{cor:Ht}
Let $(X,\sfd,\mm)$ be an $RCD(K,\infty)$ space. Then, there exist a unique one parameter family of maps $\sfh_t:\probt{\supp(\mm)}\to\probt{\supp(\mm)}$ such that:
\begin{itemize}
\item[(i)] for any $\mu,\nu\in\probt{\supp(\mm)}$ there holds
\begin{equation}
\label{eq:contr}
W_2(\sfh_t(\mu),\sfh_t(\nu))\leq e^{-Kt}W_2(\mu,\nu),\qquad\forall t\geq 0,
\end{equation}
\item[(ii)] for any $\mu\in\probt{\supp(\mm)}$ the curve $t\mapsto \H_t(\mu)$ is $W_2$--continuous,
\item[(iii)] for any $\mu\in D(\ent)$, the curve $t\mapsto \sfh_t(\mu)$ is the gradient flow of the entropy starting from $\mu$, according to Theorem~\ref{thm:gfent}.
\end{itemize}
\end{corollary}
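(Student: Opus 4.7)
The plan is to extend, via density and the $W_2$-contraction estimate, the gradient flow of $\ent$ given by Theorem~\ref{thm:gfent} from $D(\ent)$ to its $W_2$-closure $\probt{\supp(\mm)}$, and then verify the three listed properties together with uniqueness.

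The starting point is the standard fact, available on any $CD(K,\infty)$ space, that $D(\ent)$ is $W_2$-dense in $\probt{\supp(\mm)}$: given $\mu\in\probt{\supp(\mm)}$, one truncates it to bounded support and then regularizes it by convolution with a bounded density supported in $\supp(\mm)$, producing finite--entropy approximants. On $D(\ent)$ I define $\sfh_t(\mu)$ to be the gradient flow of $\ent$ issued from $\mu$; by Theorem~\ref{thm:gfent} this curve is locally absolutely continuous (in particular $W_2$-continuous) in $t$, and the preceding proposition provides the estimate
\[
W_2(\sfh_t(\mu),\sfh_t(\nu))\leq e^{-Kt}W_2(\mu,\nu),\qquad\forall\,\mu,\nu\in D(\ent),\ t\geq 0.
\]

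Given $\mu\in\probt{\supp(\mm)}$, pick $\mu_n\in D(\ent)$ with $W_2(\mu_n,\mu)\to 0$. For each fixed $t\geq 0$ the displayed estimate makes $\{\sfh_t(\mu_n)\}_n$ a $W_2$-Cauchy sequence; its limit lies in $\probt{\supp(\mm)}$, since each $\sfh_t(\mu_n)$ is absolutely continuous w.r.t. $\mm$ by Theorem~\ref{thm:idegf} and $\supp(\mm)$ is closed. I set this limit to be $\sfh_t(\mu)$. Independence of the approximating sequence and the extension of the contraction to all of $\probt{\supp(\mm)}$ both follow by passing to the limit in the estimate above, which yields~(i), while~(iii) holds by construction. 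For~(ii), the uniform bound
\[
\sup_{t\in[0,T]}W_2(\sfh_t(\mu_n),\sfh_t(\mu))\leq\Big(\sup_{t\in[0,T]}e^{-Kt}\Big)\,W_2(\mu_n,\mu)\longrightarrow 0
\]
shows that the continuous curves $t\mapsto\sfh_t(\mu_n)$ converge locally uniformly to $t\mapsto\sfh_t(\mu)$, which is therefore continuous on $[0,+\infty)$.

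Uniqueness is then formal from~(i) and~(iii): condition~(iii) pins the family down on the dense set $D(\ent)$, after which~(i) forces it to agree with the constructed one on all of $\probt{\supp(\mm)}$. The only mildly delicate ingredient is the density of $D(\ent)$ in $\probt{\supp(\mm)}$, but this is a well-established fact in $CD(K,\infty)$ theory, so I do not expect any real obstacle in executing the plan.
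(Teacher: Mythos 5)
Your proof is correct and follows exactly the route the paper intends: the contraction estimate (the proposition immediately preceding the corollary) permits extension of the entropy gradient flow from $D(\ent)$ to its $W_2$-closure, which the paper identifies with $\probt{\supp(\mm)}$, and your Cauchy-sequence construction, local uniform convergence for~(ii), and density-plus-contraction uniqueness argument fill in precisely the details the paper leaves implicit by calling the statement a ``simple corollary''. The one imprecise point is the appeal to ``convolution'' to establish the density of $D(\ent)$ in $\probt{\supp(\mm)}$: convolution has no canonical meaning on a general metric measure space, and the standard device is rather to approximate $\mu$ by convex combinations of normalized restrictions of $\mm$ to small balls covering $\supp(\mu)$, which produces bounded-density (hence finite-entropy) measures $W_2$-close to $\mu$, while the reverse inclusion --- that the closure cannot exceed $\probt{\supp(\mm)}$ --- follows from the closedness of $\supp(\mm)$ as you observe. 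Since you correctly flag this density as a standard, citable fact, the imprecision does not affect the logic of the argument.
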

It can be shown that $\sfh_t(\mu)\ll\mm$ for any $\mu\in\probt{\supp(\mm)}$ and any $t>0$. Thus, the maps $\H_t:\probt{\supp(\mm)}\to\probt{\supp(\mm)}$ induce maps $\h_t:L^1(X,\mm)\to L^1(X,\mm)$ via the formula
\[
\h_t(f)\mm:=\H_t(f\mm),\qquad\forall f\in L^1(X,\mm)\ :\ f\mm\in\probt X.
\]
and the requirement that $\h_t$ is linear and continuous in $L^1$.

We recall that $\H_t(\mu)\in D(\ent)$ implies the  $L^1\to L\log L$ regularization property
\[
\h_t(f)\in L\log L(X,\mm),\qquad\forall t>0,\ f\in L^1(X,\mm).
\]
We say that the flow $\h_t$ is {\em ultracontractive} provided that the following stronger regularization holds:
\[
\exists p>1\quad\textrm{ such that }\quad\|\h_t(f)\|_{L^p}\leq C(t)\|f\|_{L^1},\qquad\forall t>0,
\]
or equivalently (by the Young inequality for convolutions) if
\begin{equation}
\label{eq:ultra}
\|\h_t(f)\|_{L^\infty}\leq \widetilde C(t)\|f\|_{L^1},\qquad\forall t>0.
\end{equation}

\subsubsection{Convergence of metric--measure structures}
We recall here some basic concepts regarding convergence of metric--measure structures. The approach that we chose is that of $\D$--{\em convergence} introduced by Sturm in~\cite{Sturm06I} and of {\em pointed} $\D$--{\em convergence} analyzed in~\cite{AGMS12}. There are strong relations between these notions and those of {\em measured 
Gromov--Hausdorff convergence} and {\em pointed measured Gromov--Hausdorff convergence}, we refer to~\cite{AGMS12} for a discussion.

We say that a metric measure space $(X,\sfd,\mm)$ is normalized provided that $\mm$ is a probability measure and that it has finite variance if $\mm\in\probt X$. In the following definition and the discussion thereafter we write $\sqcup$ for the disjoint union of two sets.
\begin{definition}[$\D$--convergence]
Let $(X_n,\sfd_n,\mm_n)$, $n\in\N$, and $(X,\sfd,\mm)$ be normalized metric measure spaces with finite variance. We say that $(X_n,\sfd_n,\mm_n)$ converges to $(X,\sfd,\mm)$ in 
$\D$--sense provided that there exists a metric $\di$ on $Y:= \sqcup_n X_n\sqcup X$ which coincides with $\sfd_n$ (resp. $\sfd$) when restricted to $X_n$ (resp. $X$) and such that
\[
\lim_{n\to\infty}W_2^{(Y,\di)}(\mm_n,\mm)=0
\]
\end{definition}
Notice that Sturm in~\cite{Sturm06I} defined a distance $\D$ on the space of normalized metric measure spaces with finite variance, and that convergence w.r.t. this distance means precisely what we just defined: we preferred this point of view because in our discussion the presence of a distance behind a converging sequence is not really important.

\bigskip

While $\D$--convergence is suitable to deal with non--compact spaces (as opposed to measured Gromov--Hausdorff convergence), it requires the measure $\mm$ to be in $\probt X$, which is a quite restrictive assumption in general. To overcome this problem, in~\cite{AGMS12} a variant of $\D$--convergence has been proposed, called pointed $\D$--convergence.
\begin{definition}[Pointed $\D$--convergence]\label{def:dpoint}
Let  $(X_n,\sfd_n,\mm_n,\overline x_n)$, $n\in\N$, and $(X,\sfd,\mm,\overline x)$ be pointed metric measure spaces with $\overline x_n\in\supp(\mm_n)$, $n\in\N$, $\overline x\in\supp(\mm)$ and $\mm(X)>0$. We say that  $(X_n,\sfd_n,\mm_n,\overline x_n)$ converges to $(X,\sfd,\mm,\overline x)$ in the pointed $\D$--sense provided that there exists a constant $\Co\geq0$ such that the following are true.
\begin{itemize}
\item[(i)] 
\[
\sup_{n\in\N}\int_X \sfd^2(\cdot,\overline x_n)e^{-\Co \sfd^2(\cdot,\overline x_n)}\,\d\mm_n<+\infty.
\]
\item[ii)]
\[
\lim_{n\to\infty}\int_X e^{-\Co \sfd^2(\cdot,\overline x_n)}\,\d\mm_n=\int_X e^{-\Co \sfd^2(\cdot,\overline x)}\,\d\mm.
\]
\item[iii)] There exists a metric $\di$ on $Y:=\sqcup_nX_n\sqcup X$ which coincides with $\sfd_n$ (resp. $\sfd$) when restricted to $X_n$ (resp. $X$) and such that
\[
\begin{split}
\lim_{n\to\infty}\di(x_n,x)&=0,\\
\lim_{n\to\infty}W_2^{(Y,\di)}(\widetilde\mm_n,\widetilde\mm)&=0,
\end{split}
\] 
where $\widetilde\mm_n:=z_n^{-1}e^{-\Co\sfd^2(\cdot,\overline x_n)}\mm_n$, $n\in\N$, and $\widetilde\mm:=z^{-1}e^{-\Co\sfd^2(\cdot,\overline x)}\mm$, being $z_n:=\int_X e^{-\Co\sfd^2(\cdot,\overline x_n)}\,\d\mm_n$ and $z:=\int_X e^{-\Co\sfd^2(\cdot,\overline x)}\,\d\mm$ the normalization constants.
\end{itemize}
\end{definition}
It is not difficult to see that under pointed $\D$--convergence there holds
\begin{equation*}
\forall x\in\supp(\mm)\textrm{ there exists }n\mapsto x_n\in\supp(\mm_n)\textrm{ such that }\lim_{n\to\infty}\di(x_n,x)=0,
\end{equation*}
which shows, in particular, that  $\D$--convergence is a particular case of pointed $\D$--convergence (just pick $\Co=0$ and use this property to obtain a suitable converging sequence of reference points). 
\begin{remark}\label{re:pseudo}{\rm The definitions of $\D$--convergence and pointed $\D$--convergence can directly be adapted to pseudo metric spaces, i.e. spaces where the ``distance" is not required to be positive at couples of different points. In this case, one just requires $\di$ to be a pseudo distance on $Y$.
}\fr\end{remark}
Lower Ricci curvature bounds and heat flows are stable w.r.t. $\D$--convergence, as stated in the next propositions (for the proof, see~\cite{AGMS12}).
\begin{proposition}[Stability of $RCD(K,\infty)$ spaces]\label{prop:rcdstable}
Let  $(X_n,\sfd_n,\mm_n,\overline x_n)$, $n\in\N$, be a sequence of pointed metric measure spaces converging to some $(X,\sfd,\mm,\overline x)$ in the pointed $\D$--sense, as in Definition~\ref{def:dpoint}. Assume that $(X_n,\sfd_n,\mm_n)$ is an $RCD(K,\infty)$ space for every $n\in\N$. Then, $(X,\sfd,\mm)$ is an $RCD(K,\infty)$ space as well.
\end{proposition}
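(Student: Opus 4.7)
The strategy is to deduce the stability from the characterization of $RCD(K,\infty)$ spaces via the Evolution Variational Inequality $EVI_K$ for gradient flows of the relative entropy, which behaves well under pointed $\D$-convergence. The first step is to reduce to a common-space, finite-variance setting. Working in the glued space $(Y,\di)$ of Definition~\ref{def:dpoint} and replacing each $\mm_n$, $\mm$ by their weighted versions $\widetilde{\mm}_n$, $\widetilde{\mm}$, condition~(iii) of Definition~\ref{def:dpoint} gives $W_2^{(Y,\di)}(\widetilde{\mm}_n,\widetilde{\mm})\to 0$; conditions (i)--(ii) ensure convergence of the normalizing constants and uniform integrability of $\sfd^2(\cdot,\bar x_n)$ against $\widetilde{\mm}_n$. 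Since the identity $\ent(\mu)=\entt(\mu)-\Co\int\sfd^2(\cdot,\bar x_n)\,\d\mu-\log z_n$ relates the two entropies in a continuous way, lower semicontinuity and $\Gamma$-convergence questions for $\ent_{\mm_n}$ reduce to corresponding statements for $\entt$ on the glued probability space, where they are classical.

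Next I would prove the $CD(K,\infty)$ bound for $(X,\sfd,\mm)$. Given $\mu,\nu\in D(\ent)$ I would construct recovery sequences $\mu^n,\nu^n\in\probt{X_n}$ (by pushforward along an optimal plan between $\widetilde{\mm}$ and $\widetilde{\mm}_n$, combined with a suitable density adjustment) such that $W_2(\mu^n,\mu)\to 0$, $W_2(\nu^n,\nu)\to 0$, $\ent_{\mm_n}(\mu^n)\to\ent(\mu)$ and $\ent_{\mm_n}(\nu^n)\to\ent(\nu)$. The $CD(K,\infty)$ assumption on $(X_n,\sfd_n,\mm_n)$ yields geodesics $t\mapsto\mu^n_t$ along which the entropy is $K$-convex. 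Uniform bounds on second moments and entropies, together with the lower semicontinuity of $\ent$ under pointed $\D$-convergence, give by compactness a limit geodesic $t\mapsto\mu_t$ in $\probt{Y}$ which is supported on $X$ and inherits the $K$-convex entropy inequality, establishing $CD(K,\infty)$ on $(X,\sfd,\mm)$.

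For the Hilbertianity of $W^{1,2}(X,\sfd,\mm)$ I would pass to the limit in the $EVI_K$ inequality satisfied by the heat flow on each $(X_n,\sfd_n,\mm_n)$. For $\mu\in D(\ent)$ with recovery sequence $\mu^n$, the contraction estimate~\eqref{eq:contr} makes $t\mapsto\sfh_t^n(\mu^n)$ equi-Lipschitz $[0,T]\to(\probt Y,W_2^{(Y,\di)})$, hence precompact; any limit curve $t\mapsto\sigma_t$ satisfies, for every $\nu\in D(\ent)$ with recovery sequence $\nu^n\in\probt{X_n}$, the distributional inequality
\begin{equation*}
\frac12\frac{\d}{\d t}W_2^2(\sigma_t,\nu)+\frac K2 W_2^2(\sigma_t,\nu)+\ent(\sigma_t)\leq\ent(\nu),
\end{equation*}
obtained by lower semicontinuity of $\ent$ and continuity of $W_2^2$ against the recovery sequences. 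This identifies $\sigma_t$ as an $EVI_K$-gradient flow of $\ent$ starting from $\mu$. The existence of $EVI_K$-gradient flows from a $W_2$-dense set of initial data is equivalent to $(X,\sfd,\mm)$ being $RCD(K,\infty)$, concluding the proof.

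The main obstacle I expect is the construction of the recovery sequences with strict convergence of entropies, not merely a $\Gamma$-$\limsup$ bound, which is needed in order to pass to the limit in both the $K$-convexity and the $EVI_K$ inequalities. This is essentially a Mosco-type convergence statement for the entropies along pointed $\D$-converging spaces: the natural approach is to transport along an optimal plan between $\widetilde{\mm}_n$ and $\widetilde{\mm}$ and then mollify (by running a short time of the $X_n$-heat flow) at a scale compatible with the rate of convergence, using the ultracontractivity-type bounds for the approximating spaces. Once this approximation property is secured, the remainder of the argument is a soft combination of compactness and lower semicontinuity.
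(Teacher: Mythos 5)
The paper itself does not prove this proposition: it defers to the cited reference (``for the proof, see [AGMS12]''). Your outline -- reducing to the glued space $(Y,\di)$ with the tilted measures $\widetilde\mm_n\to\widetilde\mm$ and passing through the $EVI_K$ characterization of $RCD(K,\infty)$ -- is essentially the route taken in that literature. A minor simplification: once the limit heat flow is shown to satisfy $EVI_K$, the $K$-geodesic convexity of $\ent$ (hence $CD(K,\infty)$) follows automatically, since a functional admitting $EVI_K$-flows is necessarily $K$-geodesically convex; so the paragraph constructing limit geodesics can be dropped.

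The one genuine gap is the appeal to ``ultracontractivity-type bounds for the approximating spaces'' in the recovery-sequence construction. The $(X_n,\sfd_n,\mm_n)$ are only assumed to be $RCD(K,\infty)$, and in infinite dimensions the heat flow need not be ultracontractive; indeed the paper introduces ultracontractivity as an \emph{extra} hypothesis in Proposition~\ref{le:inj} and derives it only for $RCD(K,N)$, $N<\infty$, in Remark~\ref{re:finite}. So the mollification step you sketched does not go through as stated. Fortunately it is not needed: to pass to the limit in the $EVI_K$ inequality you only need a $\Gamma$-$\limsup$ recovery for the reference measure $\nu$, while for the flow $\sigma_t$ the joint lower semicontinuity of the relative entropy in both its argument and the reference measure (under weak convergence with uniformly bounded second moments) suffices. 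The $\Gamma$-$\limsup$ inequality is immediate for measures of the form $\nu=c\,e^{-V}\widetilde\mm$ with $V:Y\to\R$ bounded and Lipschitz: take $\nu^n:=c_n\,e^{-V}\widetilde\mm_n$ and use conditions (i)--(iii) of Definition~\ref{def:dpoint} together with dominated convergence to get $W_2^{(Y,\di)}(\nu^n,\nu)\to0$ and ${\rm Ent}_{\mm_n}(\nu^n)\to\ent(\nu)$. Since such measures are $W_2$-dense and the $EVI_K$ property on a $W_2$-dense set of starting data characterizes $RCD(K,\infty)$, this closes the argument without any mollification.
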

In the next statement, we will denote with $\H_{n,t}$ the heat flow on $X_n$ and by $\H_t$ the one on $X$.
\begin{proposition}[Stability of the heat flow]\label{prop:stabgf}
Let  $(X_n,\sfd_n,\mm_n,\overline x_n)$, $n\in\N$, be a sequence of pointed metric measure spaces converging to some $(X,\sfd,\mm,\overline x)$ in the pointed $\D$--sense, as in Definition~\ref{def:dpoint}. Assume that $(X_n,\sfd_n,\mm_n)$ is an $RCD(K,\infty)$ space for every $n\in\N$, so that also $(X,\sfd,\mm)$ is an $RCD(K,\infty)$ space, according to Proposition~\ref{prop:rcdstable}.

Let $(Y,\di)$ as in the Definition~\ref{def:dpoint}. Then, for every sequence $n\mapsto x_n\in \supp(\mm_n)$ and point $x\in \supp(\mm)$ such that $\di(x_n,x)\to 0$, there holds
\[
\lim_{n\to\infty}W_2^{(Y,\di)}\big(\H_{n,t}(\delta_{x_n}),\H_t(\delta_x)\big)=0
\]
for every $t\geq 0$
\end{proposition}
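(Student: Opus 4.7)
The plan is to combine a general stability result for gradient flows of $K$--geodesically convex functionals (applicable to finite entropy initial data) with an approximation argument that uses the $K$--contraction~\eqref{eq:contr} to handle the delta measures, which lie outside $D(\ent)$.

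First I would establish the following intermediate claim: if $\mu_n\in\probt{X_n}$, $\mu\in\probt X$ satisfy $W_2^{(Y,\di)}(\mu_n,\mu)\to 0$ and $\sup_n {\rm Ent}_{\mm_n}(\mu_n)<+\infty$, then $W_2^{(Y,\di)}(\H_{n,t}(\mu_n),\H_t(\mu))\to 0$ for every $t\geq 0$. This is the core stability statement and would be deduced from a Mosco--type convergence of the entropy functionals ${\rm Ent}_{\mm_n}$ to $\ent$, extracted from property (iii) of Definition~\ref{def:dpoint} after absorbing the normalizing exponential weights, together with the EVI characterization of the heat flow on $RCD(K,\infty)$ spaces furnished by Theorem~\ref{thm:gfent} and Corollary~\ref{cor:Ht}. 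In outline: any limit point of $\H_{n,t}(\mu_n)$ satisfies the EVI with parameter $K$ starting from $\mu$, and uniqueness of EVI solutions forces it to coincide with $\H_t(\mu)$.

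Second I would approximate the delta measures by normalized restrictions of $\mm_n$ and $\mm$ to small balls. For $r>0$ put
\[
\sigma^r:=\frac{\mm\res{B_r(x)}}{\mm(B_r(x))}, \qquad \sigma_n^r:=\frac{\mm_n\res{B_r(x_n)}}{\mm_n(B_r(x_n))},
\]
both well defined since $x\in\supp(\mm)$ and $x_n\in\supp(\mm_n)$. By construction $W_2^{(Y,\di)}(\sigma^r,\delta_x)\leq r$ and analogously for $\sigma_n^r$, while the entropies satisfy the elementary bound ${\rm Ent}_{\mm_n}(\sigma_n^r)\leq-\log\mm_n(B_r(x_n))$. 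Because $\sfd_n(x_n,\overline x_n)$ stays bounded in $n$ (since $\di(x_n,x)\to 0$ and $\di(x,\overline x_n)\to\di(x,\overline x)$), the exponential weight appearing in property (iii) of Definition~\ref{def:dpoint} is uniformly bounded above and below on $B_r(x_n)$; consequently $\inf_n\mm_n(B_r(x_n))>0$ and $\sigma_n^r\to\sigma^r$ in $W_2^{(Y,\di)}$ both follow from weak convergence of $\widetilde\mm_n$ to $\widetilde\mm$ applied to continuous cutoffs localized near $x$.

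Combining the two steps via the triangle inequality and the contraction~\eqref{eq:contr}, for every $r>0$ one has
\begin{align*}
W_2^{(Y,\di)}\bigl(\H_{n,t}(\delta_{x_n}),\H_t(\delta_x)\bigr)
&\leq W_2^{(Y,\di)}\bigl(\H_{n,t}(\delta_{x_n}),\H_{n,t}(\sigma_n^r)\bigr)\\
&\quad + W_2^{(Y,\di)}\bigl(\H_{n,t}(\sigma_n^r),\H_t(\sigma^r)\bigr)\\
&\quad + W_2^{(Y,\di)}\bigl(\H_t(\sigma^r),\H_t(\delta_x)\bigr)\\
&\leq 2re^{-Kt}+W_2^{(Y,\di)}\bigl(\H_{n,t}(\sigma_n^r),\H_t(\sigma^r)\bigr);
\end{align*}
the middle term vanishes as $n\to\infty$ by the first step, so $\limsup_n W_2^{(Y,\di)}(\H_{n,t}(\delta_{x_n}),\H_t(\delta_x))\leq 2re^{-Kt}$, and sending $r\downarrow 0$ concludes. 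The main obstacle is the intermediate claim: one must establish stability of EVI gradient flows across a \emph{varying} sequence of metric measure spaces, which requires identifying the correct notion of convergence for the entropy functionals (the normalizing exponential weights in Definition~\ref{def:dpoint} are crucial here, to compare entropies across spaces with possibly infinite reference measures) and exploiting in an essential way the linearity of the heat flow granted by the $RCD$ condition, so that the EVI holds with a common constant $K$ along the whole sequence.
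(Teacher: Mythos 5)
The paper itself does not prove Proposition~\ref{prop:stabflow}'s prerequisite; both stability propositions are taken from the cited reference (the sentence before Proposition~\ref{prop:rcdstable} reads ``for the proof, see~\cite{AGMS12}''). So there is no in-paper argument to compare against, and your proposal must stand on its own.

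Your two-step architecture is the right one and matches the standard route: reduce from delta measures (which lie outside $D(\ent)$) to finite-entropy initial data via normalized ball restrictions, absorb the approximation error through the $K$-contraction~\eqref{eq:contr} applied both to $\H_{n,t}$ and $\H_t$, and handle the finite-entropy case by a stability theorem for entropy gradient flows across the converging sequence of spaces. The second, approximative step is essentially complete; the one detail to add is the choice of a ``good'' radius $r$ with $\mm(\partial B_r(x))=0$ (true for all but countably many $r$), which is what lets you pass from weak convergence of $\widetilde\mm_n$ to $\widetilde\mm$ to the convergence $\mm_n(B_r(x_n))\to\mm(B_r(x))$ and $\sigma_n^r\to\sigma^r$; the uniform two-sided bound on the exponential weight on the fixed compact ball $\overline{B_{2r}(x)}$ is indeed what mediates between $\widetilde\mm_n$ and $\mm_n$.

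The substantive gap is the intermediate claim, which in your write-up is a promise rather than a proof. It is not a small promise: it carries the entire weight of the proposition. To discharge it one needs (a) a Mosco/$\Gamma$-convergence statement for the relative entropies $\mathrm{Ent}_{\mm_n}\to\ent$ along the embedding into $(Y,\di)$ -- and here the exponential normalization of Definition~\ref{def:dpoint} is exactly what allows one to compare entropies when the reference measures are only $\sigma$-finite; (b) a tightness and uniform-second-moment argument for $\{\H_{n,t}(\mu_n)\}_n$, so that subsequential $W_2$-limits exist in $\probt Y$ (the uniform entropy bound alone does not give this; one also needs a moment estimate, usually extracted from the logarithmic Sobolev/contraction structure); and (c) the $\mathrm{EVI}_K$ characterization of the heat flow on $RCD(K,\infty)$ spaces together with stability and uniqueness of $\mathrm{EVI}_K$-solutions under Mosco convergence of the driving functionals. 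You correctly identify all three ingredients, but none is routine, and the proposal offers no argument for any of them. As a blind reconstruction it points at the right machinery; as a proof it reduces Proposition~\ref{prop:stabgf} to a claim of comparable depth.
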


\subsection{Definition of the flow and properties}
We are going to define two families of pseudo-distances $\widetilde \sfd_t$ and $\sfd_t$: the former corresponds to the `chord' distance in the embedding~\eqref{eq:constr}, the latter to the `arc' one.
\begin{definition}\label{def:flownonsm1}
Let $(X,\sfd,\mm)$ be an $RCD(K,\infty)$ space with $\supp(\mm)=X$ and $t\geq 0$. The function $\widetilde\sfd_t:X\times X\to[0,+\infty]$ is defined as:
\[
\widetilde \sfd_t(x,y):=W_2(\H_t(\delta_x),\H_t(\delta_y))
\]
\end{definition}
It is immediate to check that $\widetilde\sfd_t$ is a pseudo-distance on $X$  (i.e. it shares all the properties of a distance except the fact that it can be 0 at couples of different points), see Theorem~\ref{thm:nonsmooth} below for the simple details. For a $\widetilde\sfd_t$--Lipschitz curve $s\mapsto\gamma_s$, we will denote by $|\dot\gamma_s|_t$ its metric speed defined as in~\eqref{eq:metricspeed} computed in the pseudo-metric space $(X,\widetilde\sfd_t)$ (it is easily verified that to pass from metric to pseudo-metric spaces creates no problems in the definition).

Observe that Corollary~\ref{cor:Ht} ensures that if $t\mapsto\gamma_t\in X$ is a $\sfd$--Lipschitz curve, then it is also $\widetilde\sfd_t$--Lipschitz. Hence the following definition makes sense:
\begin{definition}\label{def:nonsmooth2}
Let $(X,\sfd,\mm)$ be an $RCD(K,\infty)$ space with $\supp(\mm)=X$ and $t\geq 0$. The function $\sfd_t:X\times X\to[0,+\infty]$ is defined as:
\[
\sfd_t(x,t):=\inf_\gamma\int_0^1|\dot\gamma_s|_t\,\d s,
\]
where the infimum is taken among all $\sfd$--Lipschitz curves $\gamma$ on $[0,1]$ joining $x$ to $y$.
\end{definition}
\begin{remark}{\rm In connection with Remark~\ref{re:ind} notice that in the non-smooth situation we do not expect the pseudo-distances $\widetilde\sfd_t$ to be bi-Lipschitz w.r.t. the original distance $\sfd$, therefore in defining the pseudo-distance $\sfd_t$ as infimum of length of curves, the length being measured w.r.t. $\widetilde\sfd_t$, we need to make a choice: either we directly consider $\widetilde\sfd_t$--Lipschitz curves or we consider only those which are also $\sfd$--Lipschitz.

Both choices seem reasonable, we preferred the second one because it makes simpler to prove the desired weak continuity properties in Theorem~\ref{thm:cont}.
}\fr\end{remark}
\begin{theorem}[Basic properties of the flow]\label{thm:nonsmooth}
Let $(X,\sfd,\mm)$ be an $RCD(K,\infty)$ space such that $\supp(\mm)=X$. Then, $\widetilde\sfd_0=\sfd_0=\sfd$ and for every $t>0$ the functions $\widetilde\sfd_t,\sfd_t$ are pseudo distances on $X$ (i.e. they share all the properties of a distance except the fact that they can be 0 at couples of different points).

Also, if for some $t>0$ the map $\H_t:\probt X\to\probt X$ is injective, then $\widetilde\sfd_t$ and $\sfd_t$ are distances.

Moreover, if $(X,\sfd)$ is compact, the distances $\widetilde\sfd_t,\sfd_t$ induce the same topology of $\sfd$ on $X$.
\end{theorem}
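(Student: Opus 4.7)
I split the statement into four pieces: the base case $t=0$, the pseudo-distance axioms for $t>0$, the distance upgrade under injectivity, and the topology statement in the compact case.

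At $t=0$ the identity $\H_0=\Id$ on $\probt X$ immediately gives $\widetilde\sfd_0(x,y)=W_2(\delta_x,\delta_y)=\sfd(x,y)$. For $\sfd_0$ I would use the fact, recalled just after the definition of $CD(K,\infty)$--spaces, that $(\supp(\mm),\sfd)=(X,\sfd)$ is a length space: given $\varepsilon>0$ one finds a $\sfd$--rectifiable curve of length $\le\sfd(x,y)+\varepsilon$ joining $x$ to $y$ and reparametrizes it at constant speed to make it $\sfd$--Lipschitz; since $\widetilde\sfd_0=\sfd$, the $\widetilde\sfd_0$--length of this curve equals its $\sfd$--length, so $\sfd_0\le\sfd$, and the opposite inequality is the trivial bound between curve length and chord in the pseudo-metric $\widetilde\sfd_0$.

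For $t>0$, $\widetilde\sfd_t$ inherits symmetry, non-negativity and the triangle inequality from $W_2$, and the $K$--contraction of the heat flow (Corollary~\ref{cor:Ht}(i)) yields the crucial bound $\widetilde\sfd_t(x,y)\le e^{-Kt}\sfd(x,y)$, which also gives finiteness. The claims for $\sfd_t$ then follow: symmetry and triangle inequality by reversing and concatenating curves, and finiteness by combining the previous bound with the existence of $\sfd$--Lipschitz curves in the length space $(X,\sfd)$. At this stage I would also record the inequality $\sfd_t\ge\widetilde\sfd_t$: applying formula~\eqref{eq:idlength} to the pseudo-metric $\widetilde\sfd_t$ along any $\sfd$--Lipschitz curve $\gamma$ from $x$ to $y$ gives $\int_0^1|\dot\gamma_s|_t\,\d s\ge\widetilde\sfd_t(x,y)$, and taking the infimum over $\gamma$ yields the claim.

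For the injectivity assertion, if $\widetilde\sfd_t(x,y)=0$ then $\H_t(\delta_x)=\H_t(\delta_y)$, so injectivity of $\H_t$ on $\probt X$ forces $\delta_x=\delta_y$, i.e.\ $x=y$; the corresponding statement for $\sfd_t$ then follows from $\sfd_t\ge\widetilde\sfd_t$. In the compact case, the sandwich $\widetilde\sfd_t\le\sfd_t\le e^{-Kt}\sfd$ makes the identity maps $(X,\sfd)\to(X,\sfd_t)$ and $(X,\sfd)\to(X,\widetilde\sfd_t)$ Lipschitz and hence continuous; under the injectivity hypothesis the targets are genuinely metric (in particular Hausdorff), and a continuous bijection from a compact space to a Hausdorff space is a homeomorphism, so the three topologies coincide. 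The only mildly subtle point is the inequality $\sfd_t\ge\widetilde\sfd_t$, requiring a careful application of the sup--sum characterization of curve length in the pseudo-metric setting; the rest is a straightforward consequence of the $W_2$--contraction on $RCD(K,\infty)$ spaces and the elementary topological fact just used.
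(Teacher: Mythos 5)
Your proposal is correct and follows essentially the same route as the paper: at $t=0$ both use $\H_0=\Id$ together with the length-space property of $(X,\sfd)$; for $t>0$ both derive the pseudo-distance axioms from those of $W_2$, the chain $\widetilde\sfd_t\le\sfd_t\le e^{-Kt}\sfd$ from the $K$-contraction of $\H_t$, and positivity from the injectivity hypothesis; and the topology claim is reduced to showing that a continuous bijection from the compact space $(X,\sfd)$ onto a metric target is a homeomorphism (the paper writes out the sequential/compactness argument explicitly, while you invoke the standard compact-to-Hausdorff fact, which is the same underlying step). The only place you are more explicit than the paper is in justifying $\sfd_0=\sfd$ via the length-space property of $CD(K,\infty)$ spaces; the paper calls this ``obvious'' but your reasoning correctly identifies the ingredient that makes it so.
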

\begin{proof}
The fact that $\widetilde\sfd_0=\sfd_0=\sfd$ is obvious.

By construction, $\widetilde\sfd_t$ and $\sfd_t$ are both symmetric, satisfy the triangular inequality and $\widetilde\sfd_t(x,x)=\sfd_t(x,x)=0$ for any $x\in X$ and $t\geq 0$. Also, it clearly holds
\begin{equation}
\label{eq:dtilded}
\widetilde\sfd_t(x,y)\leq \sfd_t(x,y),\qquad\forall x,y\in X,\ t\geq0.
\end{equation}
Thus, it remains to prove that $\widetilde\sfd_t,\sfd_t$ are both real valued. By estimate~\eqref{eq:contr} we immediately get
\begin{equation}
\label{eq:percont}
\widetilde\sfd_t(x,y)\leq e^{-Kt}\sfd(x,y),\qquad\forall x,y\in X,\ t\geq 0,
\end{equation}
which directly implies $|\dot\gamma_s|_t\leq e^{-Kt}|\dot\gamma_s|_0$ for a.e. $s$ for any $\sfd$--Lipschitz curve $s\mapsto\gamma_s$. Hence from the definition we obtain that
\begin{equation}
\label{eq:casa}
\sfd_t(x,y)\leq e^{-Kt}\sfd(x,y),\qquad\forall x,y\in X,\ t\geq 0.
\end{equation}

Assume now that $\H_t:\probt X\to\probt X$ is injective for some $t>0$. Then, since $W_2$ is a distance on $\probt X$,  we have  $\H_t(\delta_x)\neq \H_t(\delta_y)$ for any $x\neq y$, $t\geq 0$. Hence $\widetilde\sfd_t(x,y)>0$ and, by relation~\eqref{eq:dtilded}, also $\sfd_t(x,y)>0$.

Assume that $(X,\sfd)$ is compact. Given the chain of inequalities 
\[
\widetilde\sfd_t\leq \sfd_t\leq e^{-Kt}\sfd,
\]
to conclude it is sufficient to prove that  $\widetilde\sfd_t$ induces the same topology of $\sfd$. Let $\iota_t:(X,\sfd)\to(\probt X,W_2)\sim (X,\widetilde\sfd_t)$ be given by $\iota_t(x):=\H_t(\delta_x)$. Our aim is to show that $\iota_t$ is a homeomorphism of $X$ with its image $Y_t:=\iota_t(X)\subset\probt X$. 

Inequality~\eqref{eq:percont} grants that $\iota_t$ is continuous. It is clearly surjective and, by what we proved, also injective. To conclude, we thus need to prove that $\iota_t^{-1}:Y_t\to X$ is continuous. Let $y_n\subset Y_t$ be a sequence converging to some $y\in Y_t$ and put $x_n:=\iota_t^{-1}(y_n)$, $n\in\N$, $x:=\iota_t^{-1}(y)$. Since $X$ is compact, up to a subsequence, not relabeled, we can assume that $x_n$ converges to some $x'\in X$. Since $\iota_t$ is continuous we have $\iota_t(x')=\lim_n\iota_t(x_n)=\lim_ny_n=y$, which forces $x'=x$. Being this result independent of the converging subsequence chosen, the thesis follows.
\end{proof}
Now, to prove that $\widetilde\sfd_t,\sfd_t$ are distances, we need to know that the heat flow is injective on $RCD(K,\infty)$ spaces. Quite surprisingly, this does not seem to be so obvious: we only know a proof in the case of ultracontractive flow, where we can bring the problem to a question in $L^2$ and then use the analyticity of the flow. 
\begin{proposition}[Injectivity of the heat flow]\label{le:inj}
Let $(X,\sfd,\mm)$ be an $RCD(K,\infty)$ space. Assume that the flow $\h_t$ is ultracontractive in the sense of inequality~\eqref{eq:ultra}. Then, for $x\neq y$ and $t\geq 0$ there holds $\H_t(\delta_x)\neq \H_t(\delta_y)$.
\end{proposition}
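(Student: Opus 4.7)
My plan is to argue by contradiction, reducing the non-injectivity at the level of delta measures to a non-injectivity statement for the $L^2$ heat semigroup, which is then ruled out by the fact that self-adjoint contraction semigroups on Hilbert spaces are real-analytic.

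Suppose for contradiction that $\H_t(\delta_x)=\H_t(\delta_y)$ for some $t>0$ and $x\neq y$ (the case $t=0$ is trivial, since $\delta_x\neq\delta_y$). By the semigroup property, for every $\tau\geq t$ there holds $\H_\tau(\delta_x)=\H_{\tau-t}(\H_t(\delta_x))=\H_{\tau-t}(\H_t(\delta_y))=\H_\tau(\delta_y)$. My goal is to propagate this identity back to every $\tau>0$, and then to $\tau=0$ by weak continuity.

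Next I would use ultracontractivity to step from delta measures into $L^2$. Fix $s\in(0,t)$ and write $p_s^x, p_s^y\in L^1(X,\mm)$ for the densities of the (absolutely continuous) measures $\H_s(\delta_x),\H_s(\delta_y)$. Applying inequality~\eqref{eq:ultra} to $\h_{s/2}$ and using the semigroup identity $p_s^x=\h_{s/2}(p_{s/2}^x)$ shows $p_s^x\in L^\infty$; combined with the estimate $\int(p_s^x)^2\,\d\mm\leq\|p_s^x\|_{L^\infty}\|p_s^x\|_{L^1}=\|p_s^x\|_{L^\infty}$, this gives $p_s^x,p_s^y\in L^2(X,\mm)$. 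Set $F_s:=p_s^x-p_s^y\in L^2$ and consider the $L^2$-curve $\tau\mapsto u(\tau):=\h_\tau(F_s)$. For $\tau\geq t-s$ one has $\h_\tau(p_s^x)=p_{s+\tau}^x$ and similarly for $y$, hence $u(\tau)\equiv 0$ on $[t-s,+\infty)$.

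The crucial input is that the heat semigroup on $L^2(X,\mm)$ is analytic. This is where the $RCD$ hypothesis enters essentially: because $W^{1,2}(X,\sfd,\mm)$ is a Hilbert space, the Cheeger energy $\C$ is a quadratic Dirichlet form, its generator $\Delta$ is a non-positive self-adjoint operator on $L^2$, and the spectral theorem (or classical semigroup theory, see Stein or Pazy) yields that $\tau\mapsto\h_\tau$ extends to an analytic semigroup in a sector of the complex plane. In particular $\tau\mapsto u(\tau)\in L^2$ is real-analytic on $(0,+\infty)$, and since it vanishes on the non-empty open set $(t-s,+\infty)$ it must vanish identically. Hence $u(0)=F_s=0$, i.e.\ $p_s^x=p_s^y$, so $\H_s(\delta_x)=\H_s(\delta_y)$ for every $s\in(0,t)$.

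Finally, letting $s\downarrow 0$ and using point (ii) of Corollary~\ref{cor:Ht} (weak continuity at $s=0$) we obtain $\delta_x=\delta_y$ in $\probt{X}$, i.e.\ $x=y$, contradicting the standing assumption. The main technical point to take care of is justifying the analyticity step rigorously, which requires that the Cheeger energy be a quadratic form—precisely the $R$ in $RCD$—and the self-adjointness and non-positivity of its generator; once this is in place, the rest of the argument is a clean application of ultracontractivity plus unique continuation for analytic $L^2$-valued maps.
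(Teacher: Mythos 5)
Your proof is correct and takes essentially the same route as the paper's: ultracontractivity brings the problem into $L^2$, analyticity of the $L^2$ heat semigroup provides backward uniqueness, and $W_2$-continuity at $t=0$ (Corollary~\ref{cor:Ht}(ii)) closes the argument. The only presentational difference is that you invoke the general fact that a non-positive self-adjoint generator produces an analytic semigroup, whereas the paper keeps the argument self-contained by deriving the bound $\|\Delta^{(n)}\h_t(g)\|_{L^2}\leq \|g\|_{L^2}\,n^n/t^n$ directly from the Dirichlet-form identities~\eqref{eq:dir} and checking convergence of the Taylor series by hand.
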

\begin{proof}
By point (ii) of Corollary~\ref{cor:Ht} we have that for $x\neq y$ and $t_0>0$ sufficiently close to 0 there holds $\H_{t_0}(\delta_x)\neq \H_t(\delta_y)$. Now we use the ultracontractivity property of the flow to write $\H_{t_0}(\delta_x)=f\mm$ and $\H_{t_0}(\delta_y)=\widetilde f\mm$ for some $f,\widetilde f\in L^2(X,\mm)$, $f\neq \widetilde f$. The conclusion then follows from the fact that the flow is analytic in $L^2(X,\mm)$, as we now explain in detail.

By Theorem~\ref{thm:idegf}, the (restriction of the) flow $\h_t$ in $L^2(X,\mm)$ is linear, strongly continuous and the gradient flow of $\C$. Denote by $\Delta$ its infinitesimal generator. We claim that
\begin{equation}
\label{eq:claimdelta}
t\|\Delta \h_t(g)\|_{L^2}\leq \|g\|_{L^2},\qquad\forall g\in L^2(X,\mm),\ t>0.
\end{equation}
Indeed, using formula~\eqref{eq:dir} we get
\[
\begin{split}
\frac{t^2}2\|\Delta\h_t(g)\|^2_{L^2}&\leq \int_0^ts\|\Delta\h_s(g)\|_{L^2}^2\,\d s=-\int_0^ts\frac{\d}{\d s}\C\big(\h_s(g)\big)\,\d s\\
&=\int_0^t\C(\h_s(g))-\C(\h_t(g))\,\d s\leq \int_0^t\C(\h_s(g))\,\d s\\
&=-\int_0^t\frac{\d}{\d s}\frac12\|\h_s(g)\|^2_{L^2}\,\d s\leq \frac12\|g\|^2_{L^2},
\end{split}
\]
and inequality~\eqref{eq:claimdelta} follows. Hence, we also get $\|\Delta\Delta \h_t(g)\|=\|\Delta\h_{t/2}\Delta \h_{t/2}(g)\|\leq\frac{4\|g\|}{t^2}$ and,  denoting by $\Delta^{(n)}$ the application of $n$ times the operator $\Delta$, by induction we deduce
\[
\|\Delta^{(n)} \h_t(g)\|_{L^2}\leq \|g\|_{L^2}\frac{n^n}{t^n},\qquad\forall g\in L^2(X,\mm),\ t>0.
\]
It is readily checked that this bound implies that for any $t_0>0$ the series
\[
\sum_{n\geq 0}\frac{(t-t_0)^n}{n!}\Delta^{(n)}\h_{t_0}(g),
\]
converges for any $t$ in a sufficiently small neighborhood of $t_0$ and that its sum is precisely $\h_t(g)$. Hence, the curve $t\mapsto \h_t(g)$ is analytic, as claimed, and the injectivity of the heat flow follows.
\end{proof}
\begin{remark}[The finite dimensional case]\label{re:finite}{\rm
There is a natural way to define $RCD(K,N)$ spaces for finite $N$: just require that the space is $CD(K,N)$ and that $W^{1,2}$ is Hilbert. The fact that $CD(K,N)$ spaces are \emph{doubling} (in particular, bounded closed sets are compact) and support a {\em weak local 1-1 Poincar\'e inequality}, together with the results of~\cite{Sturm96} yield the following Gaussian estimates for the heat kernel: 
\begin{equation}
\label{eq:gaussian}
 C'\frac{e^{\displaystyle{-\frac{\sfd^2(x,y)}{4C' t}}}}{\sqrt{\mm(B_{\sqrt t}(x))}}\leq\rho(t,x,y)\leq C\frac{e^{\displaystyle{-\frac{\sfd^2(x,y)}{4 t}}}}{\sqrt{\mm(B_{\sqrt t}(x))\mm(B_{\sqrt t}(y))}}\left(1+\frac{\sfd^2(x,y)}{ t}\right)^{n/2},
\end{equation}
where $\rho(t,x,\cdot)$ is the density of $\H_t(\delta_x)$, $n$ is the doubling constant and the constants $C,C'$ depend only on the doubling constant and the constant appearing in the Poincar\'e inequality.

In particular, the upper bound implies that heat flow in $RCD(K,N)$ spaces is always ultracontractive and therefore injective. The lower bound and the fact that $\mm$ is doubling easily yield that if $x_n\subset X$ is such that $\H_t(\delta_{x_n})$ is a bounded sequence in $(\probt X,W_2)$, then $x_n$ is also bounded. This latter fact then ensures that the proof of the last part of Theorem~\ref{thm:nonsmooth} can be repeated without assuming $(X,\sfd)$ to be compact, thus obtaining the following result. 

\emph{Let $(X,\sfd,\mm)$ be an $RCD(K,N)$ space, $N<+\infty$, with $\supp(\mm)=X$. Then, all the conclusions of Theorem~\ref{thm:nonsmooth} are true.} We omit the details.
}\fr\end{remark}
We now analyze the continuity properties of the flow under $\D$--convergence.

\begin{theorem}[Continuity in time]\label{thm:cont}
Let $(X,\sfd,\mm)$ be a compact normalized $RCD(K,\infty)$ space with $\supp(\mm)=X$ and such that the heat flow $\H_t:\probt X\to\probt X$ is injective for any $t\geq0$. Then,
\begin{itemize}
\item the curve $t\mapsto (X,\widetilde\sfd_t,\mm)$ is continuous w.r.t. $\D$--convergence,
\item the curve $t\mapsto(X,\sfd_t,\mm)$ is right continuous w.r.t. $\D$--convergence. 
\end{itemize}
\end{theorem}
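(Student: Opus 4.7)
The plan, in both parts, is to construct on $Y:=X\sqcup X$ (two copies labeled by $s$ and $t$) an explicit pseudo-distance $\di$ whose restrictions to the two copies are the required distances at times $s$ and $t$, and then to bound $W_2^{(Y,\di)}(\mm_s,\mm_t)$ by means of the identity coupling and a dominated-convergence argument.

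\textbf{First statement.} By the very definition of $\widetilde\sfd_t$, the map $\iota_t(x):=\H_t(\delta_x)$ isometrically embeds $(X,\widetilde\sfd_t)$ into $(\probt X,W_2)$, so the formula $\di(x_s,y_t):=W_2(\H_s(\delta_x),\H_t(\delta_y))$, together with $\di|_{X_s}=\widetilde\sfd_s$ and $\di|_{X_t}=\widetilde\sfd_t$, defines a valid pseudo-distance on $Y$ (being the pullback of $W_2$ along $\iota_s\sqcup\iota_t$). The identity coupling between the two copies of $\mm$ then yields
\[
W_2^{(Y,\di)}(\mm_s,\mm_t)^2\le\int_X W_2^2\bigl(\H_s(\delta_x),\H_t(\delta_x)\bigr)\,d\mm(x),
\]
and the right-hand side vanishes as $s\to t$ by Corollary~\ref{cor:Ht}(ii) (pointwise continuity in time of $r\mapsto\H_r(\delta_x)$) combined with dominated convergence, since the integrand is uniformly bounded by $\mathrm{diam}_\sfd(X)^2$ thanks to compactness.

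\textbf{Second statement.} Here I would first establish the one-sided contraction
\[
\sfd_s(x,y)\le e^{-K(s-t)}\,\sfd_t(x,y)\qquad\forall s\ge t,\ x,y\in X,
\]
by writing $\H_s=\H_{s-t}\circ\H_t$, applying Corollary~\ref{cor:Ht}(i) to get $\widetilde\sfd_s\le e^{-K(s-t)}\widetilde\sfd_t$, deducing $|\dot\gamma_r|_s\le e^{-K(s-t)}|\dot\gamma_r|_t$ for every $\sfd$-Lipschitz curve $\gamma$, and infimizing lengths. On $Y$ I would then take the largest pseudo-distance satisfying $\di|_{X_s}=\sfd_s$, $\di|_{X_t}=\sfd_t$ and $\di(x_s,x_t)\le\alpha(s,t)$ for every $x$, namely
\[
\di(x_s,y_t):=\inf_{z\in X}\bigl(\sfd_s(x,z)+\alpha(s,t)+\sfd_t(z,y)\bigr).
\]
Consistency of the restrictions forces $\alpha(s,t)\ge\tfrac12\|\sfd_s-\sfd_t\|_{L^\infty(X\times X)}$, and the identity coupling gives $W_2^{(Y,\di)}(\mm_s,\mm_t)\le\alpha(s,t)$.

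\textbf{Main obstacle.} The technical core of Part 2 is showing $\alpha(s,t)\to 0$ as $s\to t^+$. The contraction gives $(\sfd_s-\sfd_t)_+\to 0$ uniformly, using compactness of $X$ and the uniform Lipschitz bound $\sfd_t\le e^{-Kt}\sfd$; but the reverse inequality $(\sfd_t-\sfd_s)_+\to 0$ is delicate, since the heat flow is not invertible and no a priori reverse contraction is available. I would approach it by showing the pointwise convergence $\sfd_s(x,y)\to\sfd_t(x,y)$ via a lower-semicontinuity argument for the length functional $\gamma\mapsto\int|\dot\gamma_r|_\tau\,dr$ in $\tau$ at $\tau=t$ along a minimizing sequence for $\sfd_{s_n}$, then upgrading pointwise to uniform convergence on $X\times X$ by Arzelà--Ascoli applied to the equi-Lipschitz family $\{\sfd_r\}_{r\in[t,t+1]}$ (common Lipschitz constant with respect to the original $\sfd$). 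This intrinsically one-sided character is exactly why only right-continuity is claimed for the length distance $\sfd_t$.
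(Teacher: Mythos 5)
Your approach is essentially the same as the paper's in both parts. For the first bullet you use exactly the paper's trick: build the coupling metric on the disjoint union as the pullback of $W_2$ under the embeddings $\iota_r(x)=\H_r(\delta_x)$, then bound $W_2^{(Y,\di)}(\mm_s,\mm_t)$ by the identity coupling and pass to the limit by compactness plus dominated convergence (the paper works with the full disjoint union $\sqcup_n X_n\sqcup X$ rather than a single pair, but the content is identical). For the second bullet the skeleton is also the same: establish the one-sided contraction $\sfd_{t+h}\le e^{-Kh}\sfd_t$, then prove the hard inequality $\sfd_t\le\liminf_{n}\sfd_{t_n}$ for $t_n\downarrow t$ by a compactness/lower-semicontinuity argument on near-minimizing curves, and observe that the contraction only goes one way, so only right continuity is obtained.

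One place where you are actually \emph{more} explicit than the paper is the construction of the coupling metric for the length distance $\sfd_t$: the paper simply asserts that the $\D$--convergence ``follows along the same lines'' and omits the details, whereas you write down the glued pseudo-distance $\di(x_s,y_t)=\inf_z\bigl(\sfd_s(x,z)+\alpha(s,t)+\sfd_t(z,y)\bigr)$, correctly identify the constraint $\alpha(s,t)\ge\tfrac12\|\sfd_s-\sfd_t\|_{L^\infty(X\times X)}$, and reduce the problem to $\|\sfd_s-\sfd_t\|_\infty\to0$ as $s\to t^+$. This is a clean way to fill the gap the paper leaves. Conversely, you leave the central lower-semicontinuity step as a plan: the paper executes it via (a) extracting, by Arzel\`a--Ascoli, a $\sfd$-uniform limit $\gamma$ of near-minimizers $\gamma_n$ for $\sfd_{t_n}(x,y)$, and (b) bounding the $\widetilde\sfd_t$-length of $\gamma$ from below by a finite partition sum $\sum_i\widetilde\sfd_t(\gamma_{s_i},\gamma_{s_{i+1}})$, each term of which is the limit of $\widetilde\sfd_{t_n}(\gamma_{n,s_i},\gamma_{n,s_{i+1}})$ by the pointwise convergence $\widetilde\sfd_{t_n}\to\widetilde\sfd_t$ established in the first part. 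If you flesh out your ``lower-semicontinuity of the length functional in $\tau$'' with this partition argument, your proof coincides with the paper's, plus the extra gluing detail. One small caveat you should be aware of (it is a weakness shared with the paper's own proof): obtaining a $\sfd$-equi-Lipschitz reparametrization of the near-minimizers $\gamma_n$ requires some care, since the a priori inequality $\sfd_t\le e^{-Kt}\sfd$ goes the wrong way to bound $\sfd$-length by $\widetilde\sfd_{t_n}$-length.
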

\begin{proof}
By Theorem~\ref{thm:nonsmooth} we know that both $\widetilde\sfd_t,\sfd_t$ induce the same topology of $\sfd$, hence the Borel structures are the same. In particular, $\mm$ is a Borel measure in both $(X,\widetilde\sfd_t)$ and $(X,\sfd_t)$ and the statement makes sense. 

Fix $t\geq 0$, let $n\mapsto t_n\geq 0$ be any sequence converging to $t$, let the space $X_n$ be a copy of $X$ endowed with the distance $\widetilde\sfd_{t_n}$, the map $\iota_n:X\to X_n$ the corresponding ``identity" map and $\mm_n:=(\iota_n)_\sharp\mm$. We define the distance $\widetilde\di$ on $Y:=\sqcup_nX_n\sqcup X$ by putting, for any $x,y\in X$,
\[
\widetilde\di(x,y):=\left\{
\begin{array}{ll}
W_2(\H_{t_n}(\delta_x),\H_{t_m}(\delta_y)),&\qquad\textrm{ if }x\in X_n,\ y\in X_m,\\
W_2(\H_{t_n}(\delta_x),\H_{t}(\delta_y)),&\qquad\textrm{ if }x\in X_n,\ y\in X,\\
W_2(\H_{t}(\delta_x),\H_{t_m}(\delta_y)),&\qquad\textrm{ if }x\in X,\ y\in X_m,\\
W_2(\H_{t}(\delta_x),\H_{t}(\delta_y)),&\qquad\textrm{ if }x,y\in X.
\end{array}
\right.
\]
Clearly, the embeddings of $(X,\widetilde\sfd_{t})$ and $(X_n,\widetilde\sfd_n)$ in $(Y,\widetilde\di)$ are isometries.

The transport plan $({\rm Id},\iota_n)_\sharp\mm\in\probt{X\times X_n}\subset\probt{Y^2}$ is admissible from $\mm$ to $\mm_n$, being ${\rm Id}$ the identity map, thus, we have
\[
W_2^{(Y,\widetilde D)}(\mm,\mm_n)\leq\sqrt{\int_{Y\times Y}\widetilde\di^2(x,y)\,\d({\rm Id},{\rm Id})_\sharp\mm(x,y)}=\sqrt{\int_X W_2^2(\H_t(x),\H_{t_n}(x))\,\d\mm(x)}.
\]
The compactness of $(X,\sfd)$ ensures that $W_2(\H_t(x),\H_{t_n}(x))$ is uniformly bounded by the diameter of $X$, while from the continuity of the curve $s\mapsto \H_s(\mu)$, for any $\mu\in\probt X$ (Corollary~\ref{cor:Ht}), we have that $W_2^2(\H_t(x),\H_{t_n}(x))$ goes to 0 as $n\to\infty$. Hence, the dominate convergence theorem implies
\[
\lim_{n\to\infty}W_2^{(Y,\widetilde D)}(\mm,\mm_n)=0,
\]
which is the first claim.

Concerning the second claim, we start noticing that from the semigroup properties of $\H_t$, for any $t,h\geq 0$ we get
\[
\begin{split}
\widetilde\sfd_{t+h}(x,y)&=W_2\big(\H_{t+h}(\delta_x),\H_{t+h}(\delta_y)\big)\\
&=W_2\big(\H_{h}(\H_t(\delta_x)),\H_{h}(H_t(\delta_y))\big)\\
&\leq e^{-Kh}W_2\big(\H_t(\delta_x),\H_{t}(\delta_y)\big)\\
&=e^{-Kh}\widetilde\sfd_t(x,y),
\end{split}
\]
by means of inequality~\eqref{eq:contr}. Therefore for any $\sfd$--Lipschitz curve $s\mapsto\gamma_s$ it holds $|\dot\gamma_s|_{t+h}\leq e^{-Kh}|\dot\gamma_s|_t$ for a.e. $s$, hence directly from the definition we get
\begin{equation}
\label{eq:decr}
\sfd_{t+h}(x,y)\leq e^{-Kh}\sfd_t(x,y).
\end{equation}
Now, fix $t\geq 0$, a sequence $t_n\downarrow t$ and $\eps>0$. We use the definition of $\sfd_{t_n}$ to find $\widetilde\sfd_t$--Lipschitz curves $[0,1]\ni s\mapsto\gamma_{n,s}$, joining $x$ to $y$, such that
\begin{equation}
\label{eq:partii}
\int_0^1|\dot\gamma_s|_t\,\d s\leq\sfd_{t_n}(x,y)+\eps.
\end{equation}
From the compactness of $(X,\sfd)$ and the inequality~\eqref{eq:casa} we can assume, with a reparametrization argument, that the curves $s\mapsto\gamma_{n,s}$ are $L$--Lipschitz w.r.t. $\sfd$ for some constant $L$ independent of $n$. This equi--Lipschitz continuity and the compactness of $X$ imply  that there exists a subsequence, not relabeled, and a limit curve $\gamma_s$, which is $L$--Lipschitz w.r.t. $\sfd$ and 
such that $\lim_n\sfd(\gamma_{n,s},\gamma_s)=0$ for any $s\in[0,1]$.

Use identity~\eqref{eq:idlength} to find $\overline N\in\N$ and a partition $0=\overline s_0<\ldots<\overline s_{\overline N}=1$ of $[0,1]$ such that
\[
\int_0^1|\dot\gamma_s|_t\,\d s\leq \sum_{i=0}^{\overline N-1}\widetilde\sfd_{t}(\gamma_{\overline s_i},\gamma_{\overline s_{i+1}})+\eps,
\]
By the definition of $\sfd_t$ we get
\begin{equation}
\label{eq:limii}
\sfd_t(x,y)\leq\int_0^1|\dot\gamma_s|_t\,\d s\leq \sum_{i=0}^{\overline N-1}\widetilde\sfd_{t}(\gamma_{\overline s_i},\gamma_{\overline s_{i+1}})+\eps.
\end{equation}
Since $\gamma_{n,s}\to\gamma_s$ as $n\to\infty$ for any $s\in[0,1]$, taking into account the continuity of $t\mapsto\H_t(\mu)$, for arbitrary $\mu\in\probt X$, we have that $\lim_n\widetilde\sfd_{t_n}(x,y)=\widetilde\sfd_t(x,y)$ for any $x,y\in X$ and thus
\[
\sum_{i=0}^{\overline N-1}\widetilde\sfd_{t}(\gamma_{\overline s_i},\gamma_{\overline s_{i+1}})=\lim_{n\to\infty}\sum_{i=0}^{\overline N-1}\widetilde\sfd_{t_n}(\gamma_{\overline s_i},\gamma_{\overline s_{i+1}}).
\]
Therefore, by inequalities~\eqref{eq:partii} and~\eqref{eq:limii}, we deduce
\begin{equation}
\label{eq:liminf}
\sfd_t(x,y)\leq \limi_{n\to\infty}\sfd_{t_n}(x,y)+2\eps\,.
\end{equation}
Hence, letting $\eps\downarrow0$, by inequality~\eqref{eq:decr}, we conclude
\[
\sfd_t(x,y)=\lim_{n\to\infty}\sfd_{t_n}(x,y),\qquad\forall x,y\in X.
\]
The proof of $\D$--convergence of $(X,\sfd_{t_n},\mm)$ to $(X,\sfd_t,\mm)$ follows along the same lines, using the dominate convergence theorem and the fact that the spaces $(X,\sfd_{t_n},\mm)$ are -- by estimate~\eqref{eq:casa} -- uniformly bounded. We omit the details. 
\end{proof}
We now discuss the stability properties of this flow of (pseudo) distances w.r.t. pointed $\D$--convergence. Shortly said,  $\widetilde\sfd_t$ is continuous and  $\sfd_t$ is lower semicontinuous under this convergence. We will denote be $\widetilde\sfd_{n,t},\sfd_{n,t}$ such pseudo distances for the space $(X_n,\sfd_n,\mm_n)$, according to Definitions~\ref{def:flownonsm1}, \ref{def:nonsmooth2}.

\begin{theorem}[Stability]\label{thm:stabflow}
Let  $(X_n,\sfd_n,\mm_n,\overline x_n)$, for $n\in\N$, be a sequence of pointed $RCD(K,\infty)$ metric measure spaces, converging to some $(X,\sfd,\mm,\overline x)$ in the pointed $\D$--sense, as in Definition~\ref{def:dpoint}. Assume that $\supp(\mm_n)=X_n$ for every $n\in\N$ and that $\supp(\mm)=X$.

Let $(Y,\di)$ as in Definition~\ref{def:dpoint} and  let $n\mapsto x_n,y_n\in \supp(\mm_n)$ for $x,y\in\supp(\mm)$ such that $\di(x_n,x)\to 0$ and  $\di(y_n,y)\to 0$ as $n\to\infty$. 

Then, for any $t\geq 0$ there holds
\begin{equation}
\label{eq:stab}
\begin{split}
\widetilde\sfd_t(x,y)&=\lim_{n\to\infty}\widetilde\sfd_{n,t}(x_n,y_n).
\end{split}
\end{equation}
Furthermore, if the bounded closed sets in  $(Y,\di)$ are compact, then it also holds
\begin{equation}
\label{eq:stab2}
\sfd_t(x,y)\leq \limi_{n\to\infty}\sfd_{n,t}(x_n,y_n),
\end{equation}
for any $t\geq 0$, with $x_n, y_n, x, y$ as in formula~\eqref{eq:stab}.

Finally, if $(Y,\di)$ is compact, the measures $\mm_n$ are normalized and for some $t>0$ the 
heat flows $\H_{n,t}:\probt{X_n}\to\probt{X_n}$, $n\in\N$ and $\H_t:\probt X\to\probt X$ are all injective, then $n\mapsto(X_n,\widetilde\sfd_{n,t},\mm_n)$ converges to 
$(X,\widetilde\sfd_t,\mm)$ in the $\D$--sense.
\end{theorem}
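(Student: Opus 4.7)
The plan is to reduce all three assertions to the stability of the heat flow (Proposition~\ref{prop:stabgf}) by isometrically embedding the spaces into $(\probt{Y},W_2^{(Y,\di)})$ via the maps $\iota_{n,t}(a):=\H_{n,t}(\delta_a)$ and $\iota_t(a):=\H_t(\delta_a)$; these are isometries from $(X_n,\widetilde\sfd_{n,t})$ and $(X,\widetilde\sfd_t)$ into $(\probt{Y},W_2^{(Y,\di)})$ because $\di$ restricts to $\sfd_n$ on $X_n$ and to $\sfd$ on $X$. Under this identification, equation~\eqref{eq:stab} is immediate: writing $\widetilde\sfd_{n,t}(x_n,y_n)=W_2^{(Y,\di)}(\iota_{n,t}(x_n),\iota_{n,t}(y_n))$ and $\widetilde\sfd_t(x,y)=W_2^{(Y,\di)}(\iota_t(x),\iota_t(y))$, Proposition~\ref{prop:stabgf} delivers $\iota_{n,t}(x_n)\to\iota_t(x)$ and $\iota_{n,t}(y_n)\to\iota_t(y)$ in $W_2^{(Y,\di)}$, and the triangle inequality for $W_2^{(Y,\di)}$ closes the argument.

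For~\eqref{eq:stab2}, fix $\eps>0$ and, for each $n$, pick a $\sfd_n$--Lipschitz curve $\gamma_{n,\cdot}:[0,1]\to X_n$ joining $x_n$ to $y_n$ with $\int_0^1|\dot\gamma_{n,s}|_{n,t}\,\d s\leq\sfd_{n,t}(x_n,y_n)+\eps$. The isometric embedding makes the push--forwards $\mu_{n,\cdot}:=\iota_{n,t}\circ\gamma_{n,\cdot}$ curves in $(\probt{Y},W_2^{(Y,\di)})$ whose $W_2^{(Y,\di)}$--length equals the $\widetilde\sfd_{n,t}$--length of $\gamma_{n,\cdot}$; after reparametrization by $W_2^{(Y,\di)}$--arclength, along a $\liminf$--realizing subsequence these are equi--Lipschitz with constant at most $\liminf_n\sfd_{n,t}(x_n,y_n)+\eps$, and their endpoints converge to $\iota_t(x),\iota_t(y)$ by part (a). Properness of $(Y,\di)$ makes $W_2^{(Y,\di)}$--bounded sets in $\probt{Y}$ relatively compact (Markov gives tightness from the $W_2$--bound, compactness of balls gives convergence of second moments), so Arzelà--Ascoli produces a $W_2$--uniform subsequential limit $\mu_\cdot$ joining $\iota_t(x)$ to $\iota_t(y)$ of length at most $\liminf_n\sfd_{n,t}(x_n,y_n)+\eps$. \emph{The main obstacle} is to show that this limit has the form $\mu_s=\iota_t(\gamma_s)$ for a $\sfd$--Lipschitz curve $\gamma_\cdot$ in $X$: one exploits that the supports of the $\mu_{n,s}$ lie in $X_n$ and that $\D$--convergence prevents mass from escaping to regions of $Y$ disjoint from $\supp(\mm)$ to get $\supp(\mu_s)\subseteq X$, then uses part (a) together with the equi--Lipschitzness of $\mu_{n,\cdot}$ to construct $\gamma_\cdot$ consistently and verify its $\sfd$--Lipschitz regularity. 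Letting $\eps\downarrow 0$ then yields $\sfd_t(x,y)\leq\int_0^1|\dot\gamma_s|_t\,\d s\leq\liminf_n\sfd_{n,t}(x_n,y_n)$.

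For the final $\D$--convergence statement, define $\Psi:Y\to\probt{Y}$ by $\Psi|_{X_n}:=\iota_{n,t}$ and $\Psi|_X:=\iota_t$, and set the pseudo--distance $\widetilde\di(a,b):=W_2^{(Y,\di)}(\Psi(a),\Psi(b))$ on $\sqcup_n X_n\sqcup X$. The injectivity assumptions ensure that $\widetilde\di$ restricts to the genuine distance $\widetilde\sfd_{n,t}$ on each $X_n$ and to $\widetilde\sfd_t$ on $X$, while between different pieces $\widetilde\di$ is only a pseudo--distance, which is permitted by Remark~\ref{re:pseudo}. Compactness of $Y$ makes the weights $e^{-\Co\di^2(\cdot,\overline x_n)}$ uniformly bounded above and below, so the pointed $\D$--convergence of $\widetilde\mm_n\to\widetilde\mm$ upgrades to $W_2^{(Y,\di)}(\mm_n,\mm)\to 0$, providing transport plans $\aalpha_n\in\prob{Y\times Y}$ with marginals $(\mm_n,\mm)$ and $\int\di^2\,\d\aalpha_n\to 0$; we re--use them as transport plans for $\widetilde\di$. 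Compactness of $Y\times Y$ yields a weak subsequential limit $\aalpha_\infty$, whose marginals are $(\mm,\mm)$ and whose support lies on the diagonal $\{(a,a):a\in\supp(\mm)\}$ (since $\int\di^2\,\d\aalpha_n\to 0$ and $\di^2$ is continuous on $Y\times Y$). A short case analysis using Proposition~\ref{prop:stabgf} (the only non--trivial case is $b_k\in X_{n_k}$ with $n_k\to\infty$ and $\di(b_k,a)\to 0$, $a\in\supp(\mm)$; the case of a fixed $X_n$ is excluded by the Cauchy argument in $X_n$ combined with disjointness) shows that $\Psi$ is continuous at points of $\supp(\mm)$, so $\widetilde\di^2$ is continuous on the support of $\aalpha_\infty$ and globally bounded by compactness of $Y$; weak convergence therefore gives $W_2^{(\sqcup_n X_n\sqcup X,\widetilde\di)}(\mm_n,\mm)^2\leq\int\widetilde\di^2\,\d\aalpha_n\to\int\widetilde\di^2\,\d\aalpha_\infty=0$, completing the $\D$--convergence.
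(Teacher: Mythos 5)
Your overall architecture matches the paper's: reduce all three claims to Proposition~\ref{prop:stabgf} through the isometric embeddings $a\mapsto\H_{n,t}(\delta_a)$, $a\mapsto\H_t(\delta_a)$ into $(\probt Y,W_2^{(Y,\di)})$. The argument for~\eqref{eq:stab} is exactly the paper's (the isometric-embedding language is just a repackaging of the triangle inequality the paper writes out).

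For~\eqref{eq:stab2} you have correctly isolated the crux: the limit curve extracted in $(Y,\di)$ must be shown to lie in $\supp(\mm)=X$, otherwise it does not compete in the infimum defining $\sfd_t$ and the stability result~\eqref{eq:stab} cannot be applied to the partition points. However, the way you propose to close this -- ``$\D$-convergence prevents mass from escaping to regions of $Y$ disjoint from $\supp(\mm)$'' -- is \emph{not} true in general. $W_2$-convergence $\widetilde\mm_n\to\widetilde\mm$ does not force Hausdorff convergence of supports: already $\mm_n=(1-\tfrac1n)\delta_0+\tfrac1n\delta_1\to\delta_0$ in $W_2(\R)$ has $1\in\supp(\mm_n)$ for all $n$ while $1\notin\supp(\delta_0)$. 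So a point $\gamma_s=\lim_n\gamma_{n,s}$ of the limit curve may lie outside $\supp(\mm)$, where $\H_t(\delta_{\gamma_s})$ is not even defined. What would make this step go through is a uniform lower bound $\mm_n(B_r(q))\geq c(r)>0$ for $q\in\supp(\mm_n)$ (e.g.\ from uniform doubling of the $\mm_n$, which the paper mentions afterwards as a sufficient condition for properness of $(Y,\di)$): then the Portmanteau inequality $\limsup_n\mm_n(\overline{B_r(p)})\leq\mm(\overline{B_r(p)})$ forces $\mm(\overline{B_r(p)})\geq c(r/2)>0$ and hence $p\in\supp(\mm)$. The paper's own proof of~\eqref{eq:stab2} is a one-liner (``along the same line as Theorem~\ref{thm:cont}'') and does not record this step either, but in Theorem~\ref{thm:cont} the issue did not arise since all the curves already lived in the single compact $X$. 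So this is a genuine gap in your proposal as written: the substitute argument you offer would fail. A secondary remark: extracting the limit curve directly in $(Y,\di)$ by Arzel\`a--Ascoli and properness, as the paper does, is cleaner than lifting to $\probt Y$ -- your claim that $W_2$-bounded sets in $\probt Y$ are relatively compact for the $W_2$-topology is not right (boundedness of second moments gives tightness but not uniform integrability of second moments), although it is harmless here because the compactness you actually need is on the $Y$-level.

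For the final $\D$-convergence claim, your route genuinely differs from the paper's. The paper glues the optimal plans $\ggamma_n$ between $\widetilde\mm$ and $\widetilde\mm_n$ via Kolmogorov's theorem into a single $\aalpha\in\prob{Y^\N}$, extracts a subsequence with $\aalpha$-a.e.\ convergence $x_n\to x_0$, and concludes by dominated convergence, using boundedness of $\di_t$ on the compact $Y$. You instead take a weak subsequential limit $\aalpha_\infty$ of the plans, observe that it concentrates on $\{(a,a):a\in\supp\mm\}$ where $\Psi$ (hence $\widetilde\di^2$) is continuous, and invoke the Portmanteau refinement for bounded functions continuous $\aalpha_\infty$-a.e.\ to pass to the limit. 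Both work; the paper's avoids needing that refinement, yours avoids the Kolmogorov construction. In your version you should spell out that $\widetilde\di^2$ is bounded on $Y^2$ (compactness plus the contraction estimate~\eqref{eq:percont}) and Borel (it is continuous on each of the countably many pieces $X_n\times X_m$, $X_n\times X$, $X\times X$), and include in the continuity case-analysis the sequence $b_k\in X$ (handled by the contraction~\eqref{eq:contr}); otherwise the argument is sound.
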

\begin{proof}
We have
\[
\begin{split}
\Big|\widetilde\sfd_{n,t}(x_n,y_n)-&\widetilde\sfd_t(x,y)\Big|\\
&=\Big|W_2^{(X_n,\sfd_n)}\big(\H_{n,t}(\delta_{x_n}),\H_{n,t}(\delta_{y_n})\big)-W_2^{(X,\sfd)}\big(\H_t(\delta_x),\H_t(\delta_y)\big)\Big|\\
&=\Big|W_2^{(Y,\di)}\big(\H_{n,t}(\delta_{x_n}),\H_{n,t}(\delta_{y_n})\big)-W_2^{(Y,\di)}\big(\H_t(\delta_x),\H_t(\delta_y)\big)\Big|\\
&\leq W_2^{(Y,\di)}\big( \H_{n,t}(\delta_{x_n}),\H_{t}(\delta_{x})\big)+W_2^{(Y,\di)}\big( \H_{n,t}(\delta_{y_n}),\H_{t}(\delta_{y})\big).
\end{split}
\]
Thus, limit~\eqref{eq:stab} follows from Proposition~\ref{prop:stabgf}.

Inequality~\eqref{eq:stab2} then follows by the general lower semicontinuity of the length w.r.t. convergence of metric spaces, along 
the same line of the proof of Theorem~\ref{thm:cont}: we just use the stability result~\eqref{eq:stab} 
in place of the continuity of $t\mapsto \H_t(\mu)$ when passing to the limit at the level of $\widetilde\sfd_{n,t}$.
 
About the last statement, we define the function $\di_t:Y^2\to[0,+\infty)$  by putting 
\[
\di_t(x,y):=\left\{
\begin{array}{ll}
W_2^{(Y,\di)}(\H_{n,t}(\delta_x),\H_{m,t}(\delta_y)),&\qquad\textrm{ if }x\in X_n,\ y\in X_m,\\
W_2^{(Y,\di)}(\H_{n,t}(\delta_x),\H_{t}(\delta_y)),&\qquad\textrm{ if }x\in X_n,\ y\in X,\\
W_2^{(Y,\di)}(\H_{t}(\delta_x),\H_{n,t}(\delta_y)),&\qquad\textrm{ if }x\in X,\ y\in X_n,\\
W_2^{(Y,\di)}(\H_{t}(\delta_x),\H_{t}(\delta_y)),&\qquad\textrm{ if }x,y\in X.
\end{array}
\right.
\]
Notice that $\di_t$ is a distance because $\di$ is a distance and by the injectivity of the flows. Moreover, 
by Theorem~\ref{thm:nonsmooth} and Proposition~\ref{prop:stabgf} we have that $\di_t$ induces the same topology of $\di$.\\
For every $n\in\N$, let $\ggamma_n\in\prob{Y^2}$ be a transport plan realizing the minimum in
\[
\int_{Y\times Y} \sfd_n^2(x,y)\,\d\ggamma,
\]
among all $\ggamma$ such that $\pi^1_\sharp\ggamma=\widetilde\mm$, $\pi^2_\sharp\ggamma=\widetilde\mm_n$. Then, for every $n\in\N$ we use a gluing argument to find  $\aalpha_n\in\prob{Y^{n+1}}$ such that,
\[
(\pi^{0},\pi^{i})_\sharp\aalpha_n=\ggamma_n,\qquad\forall i=1,\ldots,n
\]
and finally, we use Kolmogorov theorem to find $\aalpha\in\prob{Y^\N}$ such that
\[
(\pi^0,\ldots,\pi^{n})_\sharp\aalpha=\aalpha_n,\qquad\forall n\in\N.
\]
Let $f_n:Y^\N\to[0,+\infty)$ be given by 
\[
f_n\big((x_n)\big):=\di(x_0,x_n)\,,
\]
by construction there holds
\[
\int_{Y^\N} f_n^2\,\d\aalpha=\int_{Y\times Y}\di^2(x,y)\,\d\ggamma_n(x,y)\to 0,
\]
by our assumption of pointed $\D$--convergence. Therefore, up to a subsequence, not relabeled, we have that
\begin{equation}
\label{eq:aalpha}
 \aalpha-a.e.\ (x_n) \textrm{ there holds } \lim_{n\to\infty}\di(x_n,x_0)=0.
\end{equation}
Define now $g_n:Y^\N\to[0,+\infty)$ by 
\[
g_n\big((x_n)\big):=\di_t(x_0,x_n).
\]
and notice that thanks to inequality~\ref{eq:dtilded} and the fact that $(Y,\di)$ is compact, the $g_n$ are uniformly bounded. Using Proposition~\ref{prop:stabgf} and~\eqref{eq:aalpha} we deduce that for $\aalpha$--a.e. $x_n$ there holds $g_n\big((x_n)\big)\to 0$ as $n\to\infty$. Hence the dominate convergence theorem yields
\[
W_2^{(Y,\di_t)}(\widetilde\mm_n,\widetilde\mm)\leq\sqrt{\int_{Y^\N} g_n^2\,\d\aalpha}\to0.
\]
Being this result independent of the subsequence chosen, it holds for the full original sequence and the proof is completed.
\end{proof}
We conclude with some comments about the statement and proof of this theorem.  A sufficient condition in order to have that the bounded closed sets in $(Y,\di)$ are compact, is that the spaces $(X_n,\sfd_n,\mm_n)$ are uniformly doubling, in the sense that for some constant $C>0$ there holds
\[
\mm_n(B_{2R}(x))\leq C\mm_n(B_R(x)),\qquad\forall n\in\N,\ x\in X_n,\ R>0. 
\]
Indeed, this assumption passes to the limit, on doubling spaces the support of the measure is the whole space and uniformly doubling spaces are uniformly totally bounded.

In the last part of the statement (as well as in Theorem~\ref{thm:cont}), there are some hidden non--trivial technical problems. First of all we notice that the only way to get $\D$--convergence is to use the dominate convergence theorem -- as we did: this is due to the fact that Proposition~\ref{prop:stabgf} is not quantitative, hence without a uniform bound on $\di_t$ it seems hard to get the desired $W_2$--convergence. As soon as $(Y,\di)$ is bounded, we can argue as in the proof and obtain
\[
\lim_{n\to\infty}\int_{Y^\N} \di_t(x_0,x_n)\,\d\aalpha\big((x_n)\big)=0.
\]
Yet, this is not enough to conclude that $W_2^{(Y,\di_t)}(\widetilde\mm,\widetilde\mm_n)\to 0$ because we do not know if $\aalpha$ is a Borel transport plan in $Y^\N$ when on $Y$ we consider the Borel structure given by $\di_t$. Actually, it is not even clear whether on general pointed $RCD(K,\infty)$ spaces $(X,\sfd,\mm,\overline x)$, the measure $\widetilde\mm$ defined as in point (iii) of Definition~\ref{def:dpoint} is Borel w.r.t. any of the pseudo distances $\widetilde\sfd_t,\sfd_t$, so the transport problem does not really make sense, at least in classical terms (of course this measure as well as the cost function are Borel w.r.t. the original distance $\sfd$, but these are not the terms under which the $W_2$--distance is defined). It is for this reason that we added some assumptions granting that the topology -- and a fortiori the Borel structures -- of $(Y,\di)$ and $(Y,\di_t)$ coincide.

\bibliographystyle{siam}
\bibliography{biblio}

\end{document}